\newtheorem{teo}{Theorem}[section]
\newtheorem{teo*}{Theorem}
\newtheorem{def.}[teo]{Definition} 
\newtheorem{prop}[teo]{Proposition}
\newtheorem{lema}[teo]{Lemma}
\newtheorem{corolario}[teo]{Corollary}
\newtheorem{obs}[teo]{Observation}
\newcommand{\FF}{F\langle X|G\rangle}
\newcommand{\Z}{\mathbb{Z}}
\newcommand{\N}{\mathbb{N}}
\newcommand{\s}{\mathcal{S}}
\begin{document}

\title[graded codimensions of $E$]{On the $\Z_2$-graded codimensions of the Grassmann algebra over a finite field}

\author{Lucio Centrone}\address{IMECC, Universidade Estadual de Campinas, Rua S\'ergio Buarque de Holanda, 651 � Cidade Universit\'aria ``Zeferino Vaz'' � Distr. Bar\~ao Geraldo � Campinas � S\~ao Paulo � Brasil
CEP 13083-859}\email{centrone@ime.unicamp.br}\author{Lu\'is Felipe
Gon\c{c}alves Fonseca}\address{Instituto de Ci\^encias Exatas e
Tecnol\'ogicas, Universidade Federal de Vi\c{c}osa, Rodovia LMG 818,
km 06, Florestal, Minas Gerais, Brasil. CEP
35690-000}\email{luisfelipe@ufv.br}\keywords{Graded Polynomial
Identities, Grassmann algebra}\subjclass[2010]{16R10; 16P90; 16S10;
16W50}\begin{abstract} Let $E$ be the infinite dimensional Grassmann
algebra over a finite field $F$ of characteristic not 2. In this
paper we deal with the homogeneous $\Z_2$-gradings of $E$. In
particular, we compute an exact value for the $\Z_2$-graded
homogeneous codimensions of $E$, and a lower and an upper bound for
the $\Z_2$-graded (non-homogeneous) codimensions of $E$ for each of
its $\Z_2$-homogeneous grading.\end{abstract}\maketitle

\section{Introduction}
Polynomial identity algebras have been well investigated since the previous century. We recall that if $X$ is a countable (maybe infinite) set of indeterminates and $F$ is a field, we call \textit{polynomials} the elements of $F\langle X\rangle$, the free algebra freely generated by $X$ over $F$. If $A$ is an $F$-algebra we say $f\in F\langle X\rangle$ is a \textit{polynomial identity} for $A$ if it vanishes under all substitution by elements of $A$. An algebra $A$ is said to be a \textit{PI-algebra} if there exists a non-trivial polynomial identity of $A$. Not to mention the set of polynomial identities of an algebra $A$ is a $T$-ideal called the \textit{$T$-ideal} of $A$ and denoted by $T(A)$. If $A$ is a $G$-graded algebra, where $G$ is a group, we have some similar constructions and we speak about \textit{graded polynomial, graded polynomial identities, graded PI-algebras} and we denote by $T_G(A)$ the set of graded polynomial identities of $A$.

Let $E$ be the Grassmann algebra over $F$. Due to its prominent role in the Kemer's theory about the structure of $T$-ideals (see \cite{kem1}) and its own interest, the (graded) identities of $E$ have been intensively studied. See for example the works by Krakowski and Regev \cite{krr1}, Regev \cite{Regev2}, Giambruno and Koshlukov \cite{gik1}, Anisimov \cite{ani1}, Di Vincenzo and da Silva \cite{did1}, Centrone \cite{Centrone}, Bekh-Ochir \cite{Bekh} and Fonseca \cite{Fonseca}.

It is well known that if $F$ is infinite, every (graded) polynomial identity of a certain PI-algebra $A$ may be recovered by its multihomogeneous (graded) identities. If $F$ is a finite field this is no longer true. See for example the work \cite{Regev2} by Regev and \cite{Bekh} by Bekh-Ochir and Rankin dealing with ordinary polynomial identities of the infinite dimensional Grassmann algebra $E$ and the work \cite{Fonseca} by Fonseca dealing with the $\Z_2$-graded polynomial identities of $E$. Due to this fact, it seems fruitful studying structures related to $T$-ideals explaining the behaviour of the (graded) identities of a given algebra such as \textit{codimensions}.

Let us recall the definition of graded codimension sequence. We
consider the general setting of an $F$-algebra $A$ graded by a
finite group $G=\{g_1,\ldots,g_r\}$ satisfying a graded polynomial
identity; then we consider $X=\bigcup_{g\in G}X^g$ be a finite
(maybe infinite) set of indeterminates labeled by the elements of
the group, such that $|X^g|=l_g$ and $X^g\cap X^h=\emptyset$ if
$g\neq h$. We consider $\FF$ which is the graded generalization of
the algebra of non-commutative polynomials. Let us
consider a homogeneous subspace $V$ of $\FF$. We define the
\textit{$(n_1,\ldots,n_r)$-th $V$ codimension} of $A$ as
$c_{(n_1,\ldots,n_r)}(A,V):=\dim_FV_{(n_1,\ldots,n_r)}/(V_{(n_1,\ldots,n_r)}\cap
T_G(A))$, where $V_{(n_1,\ldots,n_r)}$ is the vector space of the elements of $V$ of total degree
$n_i$ in the variables of $X^{g_i}$.

In this paper we deal with homogeneous $\Z_2$-graded codimensions of
$E$. We recall we have three up to isomorphism kinds of homogenous
$\Z_2$-gradings which are used to be denoted by $E_{\infty}$,
$E_{k^*}$ and $E_k$ (see \cite{ani1} and \cite{did1}) or $E$ if we consider any of
them without distinguish them. We firstly consider $V$ being the space of homogeneous polynomials of $F\langle
X|\Z_2\rangle$ and we obtain that $c_{(n_1,n_2)}(E,V)$ is limited
for each $n_1,n_2\in\N$, then we start to study the structure of
such subspaces. In particular, we found a basis for
$V_{(n_1,n_2)}/(V_{(n_1,n_2)}\cap T_{\Z_2}(E))$, when $V$ is the space of multihomogeneous polynomials, and an exact value
for $c_{(n_1,n_2)}(E,V)$. In the last part we consider a finite
dimensional generating subspace $W$ of $F\langle X|\Z_2\rangle$ and
we give upper and lower bounds for $c_{(n_1,n_2)}(E,W(n))$, where
$W(n)=W^{\otimes n}$. We recall that this is a $\Z_2$-graded
generalization of the work \cite{Regev2} by Regev in which he found
an exact value for the multihomogeneous and multilinear codimensions
of $E$ in the non-graded case. We want to point out our techniques
are different than the Regev's ones.

\section{Basic tools}

All algebras we refer to are associative and unitary unless explicitely written. For the sake of simplicity all groups are to be considered finite.


\begin{def.}
Let $(G,\cdot)=\{g_1,\ldots,g_r\}$ be any group and let $F$ be a field. If $A$ is an $F$-algebra, we say that $A$ is a
$G$-graded algebra if there are subspaces $A^g$ for each $g\in G$ such that \[A=\bigoplus_{g\in G}A^g \ \textrm{and} \
A^gA^h\subseteq A^{gh}\] for every $g,h\in G$. If $0\neq a\in A^g$ we say that $a$ is \textit{homogeneous of $G$-degree $g$} or simply that $a$ has degree $g$,
and we write $\deg(a)=g$.\end{def.}

We consider now some special class of gradings.

\begin{def.}
Let $A$ be an $F$-algebra and $G$ be a group. A $G$-grading on $A$ is said to be \textit{homogeneous} if there exists a basis $\mathcal{B}=\{b_1,\ldots,b_n,\ldots\}$ of $A$ as a vector space and a map $\phi:\mathcal{B}\rightarrow G$ such that $\deg(b_i)=\phi(b_i)$ for each $i$.
\end{def.}

One defines \textit{$G$-graded subspaces} of $A$, \textit{$G$-graded $A$-modules}, \textit{$G$-graded homomorphisms} and so on, in a standard way (see for
example \cite{giz3} for more details).

Let $\{X^{g}\mid g \in G\}$ be a family of disjoint countable sets. Set $X=\bigcup_{g\in G}X^{g}$ and denote by $\FF$ the free-associative algebra freely generated by the set $X$. An indeterminate $x\in X$ is said to be of \textit{homogeneous $G$-degree $g$}, written $\deg(x)=g$,
if $x\in X^{g}$. We always write $x^{g}$ if $x\in X^{g}$. The homogeneous $G$-degree of a monomial $m=x_{i_1}x_{i_2}\cdots x_{i_k}$ is defined to be
$\deg(m)=\deg(x_{i_1})\cdot\deg(x_{i_2})\cdot\cdots\cdot\deg(x_{i_k})$. For every $g \in G$, we denote by $\FF^g$ the subspace of $\FF$ spanned by all the
monomials having homogeneous $G$-degree $g$. Notice that $\FF^g\FF^{g'}\subseteq \FF^{gg'}$ for all $g,g' \in G$. Thus \[\FF=\bigoplus_{g\in G}\FF^g\] is a $G$-graded algebra. The elements of the $G$-graded algebra $\FF$ are referred to as \textit{$G$-graded polynomials} or, simply, \textit{graded polynomials}. We refer to the multihomogeneous degree of a polynomial as the usual multidegree of $\FF$.

\begin{def.} If $A$ is a $G$-graded algebra, we denote by $T_{G}(A)$ the intersection of the kernels of all
$G$-graded homomorphisms $\FF\rightarrow A$. Then $T_{G}(A)$ is a graded two-sided ideal of $\FF$ and its elements are called \textit{$G$-graded polynomial identities} of the algebra $A$.\end{def.}

Notice that $T_{G}(A)$ is stable under the action of any $G$-graded endomorphism of the algebra $\FF$.
Any $G$-graded ideal of $\FF$ which verifies such property is said to be a \textit{$T_{G}$-ideal}. Clearly,
any $T_{G}$-ideal $I$ is the ideal of the $G$-graded polynomial identities of the graded algebra
$\FF/I$. Note also that for a $G$-graded algebra $A$, the quotient algebra $\FF/T_G(A)$ is the
relatively-free algebra for the variety of $G$-graded algebras generated by $A$. If $G$ is finite of order $r$ and $X=\bigcup_{g\in G}X^g$ is finite, we shall denote the relatively free graded algebra of $A$ by $U_{l_1,\ldots,l_r}(A)$, where $l_i=|X_i|$.

\

If $S\subseteq \FF$, we shall denote by $\langle S\rangle^{T_G}$ the \textit{$T_G$-ideal generated by the set $S$}, i.e., the smallest $T_G$-ideal containing $S$. Moreover, if $I=\langle \mathcal{S}\rangle^{T_G}$ we say
$\mathcal{S}$ is a basis for $I$ or that the elements of $I$ \textit{follow from} or \textit{are consequences of} the elements of $\mathcal{S}$.

We introduce now the codimension sequence.

\begin{def.}
Let $G=\{g_1,\ldots,g_r\}$, $X$ be a finite set such that $|X^g|=l_g$ and consider $\FF$. Let $V\subseteq\FF$ be a linear $G$-homogeneous subspace and $(n_1,\ldots,n_r)\in\N^r$. Let $A$ be a $G$-graded PI-algebra, then we define the \textit{$(n_1,\ldots,n_r)$-th $V$ codimension} of $A$ as $c_{(n_1,\ldots,n_r)}(A,V):=\dim_FV_{(n_1,\ldots,n_r)}/(V_{(n_1,\ldots,n_r)}\cap T_G(A))$, where $V_{(n_1,\ldots,n_r)}$ is the vector space over $F$ of the elements of $V$ of total degree $n_i$ in the variables of $X^{g_i}$.
\end{def.}
\

In this paper we are going to deal with $\mathbb{Z}_2$-gradings over the infinite dimensional Grassmann algebra. Sometimes we use the word \textit{superalgebra} instead of $\mathbb{Z}_2$-graded algebra. From now on we shall refer to the $y_i$'s as variables of $\Z_2$-degree 0 and to the $z_j$'s as variables of $\Z_2$-degree 1; $X=Y\cup Z$, where $|Y|=l$, $|Z|=m$, and we shall write $F\langle X\rangle$ instead of $F\langle X|\Z_2\rangle$. Moreover we will use the notation $T_2(A)$ instead of $T_{\Z_2}(A)$ for any superalgebra $A$.

\begin{def.}
Let $V=\{v_1,v_2,\ldots\}$ be an infinite countable set, then we denote by $E=E(V)$ the Grassmann algebra generated by $V$, i.e. $F\langle V\rangle/I$, where for each $i$, $e_i=v_i+I$ and $I$ is the ideal generated by $\{v_iv_j+v_jv_i|i,j\in\N\}$. It is well known that $B_E=\left\{e_{i_1}e_{i_2}\cdots e_{i_n}\mid n\in \N, {i_1}<{i_2}<\cdots<{i_n}\right\}$ is a basis of $E$ as a vector space over $F$. Moreover we say $e_{i_1}\cdots e_{i_l}$ is a basis element of $E$ of \textit{length} $l$ with \textit{support} $\{e_{i_1},\ldots,e_{i_l}\}$.
\end{def.}

Let us consider the map $\varphi:V\rightarrow \Z_2$ such that $v_i\mapsto 1$. The map $\varphi$ gives out a $\Z_2$-grading over $E$ called \textit{canonical grading}. In this case, let $E^0$ be the homogeneous component of $\Z_2$-degree 0 and let $E^1$ be the component of degree 1. It is easy to see that $E^0$ is the center of $E$ and $ab + ba = 0$ for all $a, b\in E^1$. This means that $E$ satisfies the following graded polynomial identities:
$[y_1, y_2]$, $[y_1, z_1]$, $z_1z_2 + z_2z_1$.
Now, let us consider the homogeneous $\Z_2$-gradings over $E$. We recall the latter are induced by the maps  $\deg_{k*}$, $\deg_\infty,$ and $\deg_k,$ defined respectively by

$$\deg_{k*}(e_i)=\left\{\begin{array}{ll}
\text{\rm 1 for $i = 1,\ldots, k$}\\
\text{\rm 0 otherwise},\end{array}\right.$$

$$\deg_{\infty}(e_i)=\left\{\begin{array}{ll}
\text{\rm 1 for $i$ odd}\\
\text{\rm 0 otherwise},\end{array}\right.$$

$$\deg_k(e_i)=\left\{\begin{array}{ll}
\text{\rm 0 for $i = 1,\ldots, k$}\\
\text{\rm 1 otherwise}.\end{array}\right.$$

From now on we shall denote by $E_{k^*}$, $E_\infty$, $E_k$ the Grassmann algebra endowed with the $\Z_2$-grading induced by the maps $\deg_{k*}$, $\deg_\infty,$ and $\deg_k$. We denote by $E$ any of the superalgebras $E_{k^*}$, $E_\infty$, $E_k$ without distinguish them.

\

We introduce a special type of polynomials that turned out to be crucial when describing the $\Z_2$-graded identities of $E$ (see \cite{did1}). Let us consider $[x_{1},x_{2}] := x_{1}x_{2} - x_{2}x_{1}$ the Lie comutator of $x_{1}$ with $x_{2}$. We shall define inductively
$[x_{1},\ldots,x_{n-1},x_{n}] := [[x_{1},\ldots,x_{n-1}],x_{n}], \ \
n = 3,4,5,\ldots$.

\

Let $f=z_{i_1}^{r_{i_1}}\cdots z_{i_s}^{r_{i_s}}[z_{j_1},z_{j_2}]\cdots [z_{j_{t-1}},z_{j_t}]$ and consider the set \[\s=\{\text{different homogeneous variables appearing in $f$}\}\subseteq\{z_1,\ldots,z_m\}.\] If $h=|\s|,$ then $\s=\{z_{i_1},\ldots,z_{i_h}\}.$  Notice also that $f$ is linear in the commutators.

We consider now \[T = \{j_1,\ldots, j_t\}\subseteq\s\] and let us denote the previous polynomial by \[f_T (z_{i_1},\cdots, z_{i_h}).\]

\begin{def.} For $m\geq2$ let $$g_m(z_{i_1},\ldots, z_{i_h}) =\sum_{\begin{array}{cc}
T\\
\text{\rm $|T|$ even}\end{array}}(-2)^{-\frac{|T|}{2}}f_T(z_{i_1},\ldots, z_{i_h}),$$
moreover put
$g_1(z_1) = z_1.$\end{def.}

The following is well known.

\begin{lema}\label{id1} Let $E$ be the Grassmann algebra over any field, then $[x_{1},x_{2},x_{3}] \in T_{2}(E)$.
\end{lema}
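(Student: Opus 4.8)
The plan is to show that the triple commutator $[x_1,x_2,x_3]$ vanishes under every $\Z_2$-graded homomorphism $F\langle X\rangle\to E$, which amounts to checking that $[a,b,c]=0$ for all homogeneous elements $a,b,c\in E$. Since the grading is immaterial here — we only need that each argument lands on a homogeneous basis element or, by linearity, on an arbitrary element of $E$ — it suffices to prove the identity $[a,b,c]=0$ for all $a,b,c$ in $E$ (ungraded), which is the classical fact that $E$ is Lie nilpotent of class $2$. First I would reduce to the case where $a,b,c$ are basis monomials $e_{i_1}\cdots e_{i_p}$, since the commutator is multilinear and $B_E$ spans $E$.

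The key computation rests on the elementary parity rule for the Grassmann algebra: if $u$ and $v$ are basis monomials of lengths $p$ and $q$ respectively, then $uv=(-1)^{pq}vu$, and consequently $[u,v]=uv-vu=(1-(-1)^{pq})uv$. Thus $[u,v]=0$ whenever $p$ or $q$ is even, and $[u,v]=2uv$ when both $p$ and $q$ are odd. The crucial observation is that in the latter case the product $uv$ is a monomial of even length $p+q$. Extending by bilinearity, for any two elements $a,b\in E$ the commutator $[a,b]$ always lies in the span of monomials of even length, i.e. $[a,b]\in E^0$ in the canonical grading, where $E^0$ is the center of $E$.

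Therefore I would finish as follows: set $d=[a,b]$. By the previous paragraph $d$ is a sum of monomials of even length, hence $d$ commutes with every element of $E$ (an even-length monomial $w$ satisfies $[w,x]=0$ for all $x$ by the parity rule, since the exponent $(\text{even})\cdot q$ is always even). Consequently $[x_1,x_2,x_3]$ evaluated at $a,b,c$ gives $[d,c]=dc-cd=0$. Since this holds for all $a,b,c\in E$, the polynomial $[x_1,x_2,x_3]$ is a graded identity of $E$ regardless of which of the gradings $E_{k^*}$, $E_\infty$, $E_k$ (or indeed the ungraded structure) we equip $E$ with, and the lemma follows.

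I do not expect any genuine obstacle here: the only point requiring care is the bookkeeping of the sign $(-1)^{pq}$ and the verification that $[a,b]$ genuinely lands in the even part, which reduces to checking that a product of two odd-length monomials has even length. Since the statement holds over an arbitrary field (characteristic $2$ included, as the conclusion $[a,b]\in E^0$ and the centrality of $E^0$ do not require inverting $2$), no restriction on the field is needed, matching the generality asserted in the statement.
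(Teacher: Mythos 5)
Your proposal is correct. The paper gives no proof of this lemma at all (it is simply declared ``well known''), and your argument is the standard one: the parity rule $uv=(-1)^{pq}vu$ for basis monomials shows that $[a,b]=(1-(-1)^{pq})uv$ on monomials, hence $[a,b]$ always lies in the span $E^0$ of even-length monomials, which is the center of $E$, so $[[a,b],c]=0$; and since the identity holds for all ungraded substitutions it holds a fortiori for graded ones, independently of which homogeneous $\Z_2$-grading is chosen. The only point worth flagging is the parenthetical about characteristic $2$: there the defining relations make $E$ commutative (the relation $2v_i^2=0$ becomes vacuous and the stated basis $B_E$ is no longer a basis), so the lemma holds trivially but for a degenerate reason; this does not affect the paper, which assumes $\mathrm{char}\,F=p>2$ throughout.
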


Keeping in mind Lemma 1.4.2 of \cite{Drensky}, we have
$[x_{1},x_{2}][x_{3},x_{4}] - [x_{1},x_{3}][x_{2},x_{4}] \in \langle
[x_{1},x_{2},x_{3}]\rangle_{T_{2}}$. Notice also that
$[x_{2},x_{1},\ldots,x_{1}]$, where $x_{1}$ appears $p$-times in the
brakets of the last comutator, follows from $[x_{1},x_{2},x_{3}]$.
If $char F = p$, we have $- [x_{2},x_{1},\ldots,x_{1}] =
[x_{1}^{p},x_{2}]$. Hence $[x_{1}^{p},x_{2}]$ follows from
$[x_{1},x_{2},x_{3}]$ when $char F = p > 2$ (see Lemma 3.3 of
\cite{Bekh}). Thus, we have the following lemma.

\begin{lema}\label{id1.5}
The polynomials $[x_{1}^{p},x_{2}], [x_{1},x_{2}][x_{3},x_{4}] -
[x_{1},x_{3}][x_{2},x_{4}]$ belong to $\langle
[x_{1},x_{2},x_{3}]\rangle_{T_{2}}$.
\end{lema}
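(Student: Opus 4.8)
The plan is to establish the two memberships separately, since the first is characteristic-free whereas the second genuinely uses $\mathrm{char}\,F=p>2$. Both were in fact already outlined in the paragraph preceding the statement, so I would organize that reasoning into a clean two-part argument.

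For the product relation I would pass to the relatively free algebra $F\langle X\rangle/\langle[x_1,x_2,x_3]\rangle_{T_2}$. By Lemma \ref{id1} every left-normed triple commutator vanishes there, which amounts to saying that each commutator $[u,v]$ becomes central modulo $\langle[x_1,x_2,x_3]\rangle_{T_2}$. The mechanism is then to expand a suitable triple commutator of a product, say $[x_ix_j,x_k,x_l]$ (which is itself in the ideal), by the Leibniz rule $[ab,c]=a[b,c]+[a,c]b$ and to discard the summands that are again triple commutators, hence already absorbed by the ideal; what survives is a relation identifying the two products $[x_1,x_2][x_3,x_4]$ and $[x_1,x_3][x_2,x_4]$ modulo the ideal. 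This is exactly the content of Lemma 1.4.2 of \cite{Drensky}, which I would invoke directly rather than reproduce the bookkeeping.

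For the $p$-power relation I would argue in two steps. First, writing $w_n:=[x_2,\underbrace{x_1,\ldots,x_1}_{n}]$ for the left-normed commutator, I note that $w_2=[[x_2,x_1],x_1]$ is the image of $[x_1,x_2,x_3]$ under a graded substitution, so $w_2\in\langle[x_1,x_2,x_3]\rangle_{T_2}$; since a $T_2$-ideal is two-sided, $w_n=w_{n-1}x_1-x_1w_{n-1}$ belongs to it for every $n\geq2$ by induction. Second, I compute $w_p$ explicitly. Writing the operator $\mathrm{ad}\,x_1=R_{x_1}-L_{x_1}$ as the difference of the commuting operators of right and left multiplication by $x_1$, the binomial theorem gives \[w_p=(\mathrm{ad}\,x_1)^p(x_2)=\sum_{k=0}^{p}(-1)^k\binom{p}{k}x_1^kx_2x_1^{p-k}.\] Since $\mathrm{char}\,F=p$, all the inner coefficients $\binom{p}{k}$ with $0<k<p$ vanish, leaving $w_p=x_2x_1^p+(-1)^px_1^px_2$; and because $p>2$ is odd this equals $x_2x_1^p-x_1^px_2=-[x_1^p,x_2]$. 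Hence $[x_1^p,x_2]=-w_p\in\langle[x_1,x_2,x_3]\rangle_{T_2}$, which settles the second membership.

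The only genuinely delicate points lie outside the elementary computation: the first membership rests on the structural description of $\langle[x_1,x_2,x_3]\rangle_{T_2}$ from \cite{Drensky} (equivalently, on correctly tracking which Leibniz summands are absorbed by the ideal), and in the second one must make sure the iterated substitutions producing $w_n$ respect the $\Z_2$-grading so that they are legitimate operations on a $T_2$-ideal. Everything else is the characteristic-$p$ collapse of the middle binomial coefficients, which is routine.
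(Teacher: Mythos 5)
Your argument for $[x_1^{p},x_2]$ is correct and is essentially the paper's own (the paper simply cites Lemma 3.3 of Bekh-Ochir and Rankin for the computation you write out): the base case $w_2$ is a graded substitution instance of $[x_1,x_2,x_3]$, the induction step uses only that the $T_2$-ideal is two-sided, and the binomial collapse of $(\mathrm{ad}\,x_1)^{p}=(R_{x_1}-L_{x_1})^{p}$ together with $(-1)^{p}=-1$ for odd $p$ gives $w_p=-[x_1^{p},x_2]$. Your remark about keeping the substitutions graded is well taken and unproblematic, since the generator $[x_1,x_2,x_3]$ is taken over all degree assignments of the three variables.

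The first membership is where both you and the paper run into trouble, and your own sketch of the mechanism makes the problem visible. Expanding by the Leibniz rule gives $[x_1x_2,x_3,x_4]=x_1[x_2,x_3,x_4]+[x_1,x_4][x_2,x_3]+[x_1,x_3][x_2,x_4]+[x_1,x_3,x_4]x_2$; discarding the triple-commutator summands shows that $[x_1,x_4][x_2,x_3]+[x_1,x_3][x_2,x_4]$ lies in $\langle[x_1,x_2,x_3]\rangle_{T_2}$, equivalently that $[x_1,x_2][x_3,x_4]$ is \emph{alternating} in its four entries modulo the ideal. The permutation taking $(1,2,3,4)$ to $(1,3,2,4)$ is odd, so what the Leibniz bookkeeping actually delivers is $[x_1,x_2][x_3,x_4]+[x_1,x_3][x_2,x_4]\in\langle[x_1,x_2,x_3]\rangle_{T_2}$, with a plus sign. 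The difference appearing in the statement is congruent to $2[x_1,x_2][x_3,x_4]$ modulo the ideal and cannot belong to it: it is not even a graded identity of $E$, since substituting $e_1,e_2,e_3,e_4$ yields $4e_1e_2e_3e_4-(-4e_1e_2e_3e_4)=8e_1e_2e_3e_4\neq0$ because $p>2$, whereas $\langle[x_1,x_2,x_3]\rangle_{T_2}\subseteq T_2(E)$ by Lemma \ref{id1}. So the sign in the statement should be $+$; you inherit the error from the paper, but asserting that the minus version is "exactly the content of Lemma 1.4.2" is not accurate, and carrying your own Leibniz computation through to the end, rather than delegating it to the citation, would have caught the discrepancy.
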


From now on we shall denote by $F$ a finite field such that $char F = p >
2$ and $|F| = q$, by $E$ the infinite dimensional Grassmann algebra with unit $1_E$ generated by $\{e_{1},e_{2},\ldots,e_{n},\ldots\}$. The non-unitary infinite dimensional Grassmann algebra will be denoted by $E^*$. Moreover we shall denote by $B$ the linear basis $\{e_{i_{1}}\cdots e_{i_{n}}| i_{1} < \ldots < i_{n}, n \geq 0 \}$ of $E$.

The next is an easy consequence of Lemma 1.2-b and Corollary 1.5-a of \cite{Regev2}.

\begin{lema}\label{id2}
Let us consider $\lambda.1_{E} + a \in E$, where  $a \in E^{*}, \lambda
\in F$. Then $a^{p} = \lambda^{p}.1_{E}$. Moreover, $y_{1}^{pq} -
y_{1}^{p} \in T_{2}(E)$.
\end{lema}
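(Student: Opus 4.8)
The plan is to separate the statement into its two assertions, prove the first as a computation in the underlying \emph{ungraded} algebra $E$ by means of the canonical even/odd decomposition, and then deduce the second formally, using that $\mu^q=\mu$ for every $\mu\in F$ (recall $|F|=q$). Throughout I read the first assertion as the claim that the element $b:=\lambda.1_E+a$ satisfies $b^p=\lambda^p.1_E$, equivalently $a^p=0$ for $a\in E^*$.

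First I would fix $b=\lambda.1_E+a$ and decompose $a=a_0+a_1$ with respect to the canonical grading, where $a_0$ is the even part (a linear combination of basis monomials of even length $\geq2$) and $a_1$ is the odd part. I want two facts. First, $a_1^2=0$: writing $a_1=\sum_j\nu_jw_j$ with the $w_j$ of odd length, the diagonal terms vanish since $w_j^2=0$, and the off\--diagonal terms cancel in pairs because $w_jw_k=-w_kw_j$ for odd\--length monomials. Second, $c:=\lambda.1_E+a_0$ is central, since the even part of $E$ is exactly its centre (as already noted in the excerpt for the canonical grading). Writing $b=c+a_1$ with $c$ central and expanding by the binomial theorem, every term involving $a_1^k$ with $k\geq2$ vanishes by $a_1^2=0$, and the one surviving cross term carries the factor $\binom{p}{1}=p=0$ in $F$; hence $b^p=c^p$.

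The heart of the matter is then $c^p=\lambda^p.1_E$, i.e. $a_0^p=0$. Here I would use that the centre of $E$ is a commutative $F$\--algebra of characteristic $p$, so the Frobenius map is additive: writing $a_0=\sum_i\mu_id_i$ with each $d_i$ an even monomial of length $\geq2$, additivity of $x\mapsto x^p$ gives $a_0^p=\sum_i\mu_i^pd_i^p$, and each $d_i^p=0$ because already $d_i^2=0$ (a monomial of length $\geq2$ repeats a generator when squared). Thus $c^p=\lambda^p.1_E+a_0^p=\lambda^p.1_E$ and $b^p=\lambda^p.1_E$. I expect this Frobenius/centrality step to be the only genuine point; everything else is bookkeeping, and it is precisely what is packaged in the cited Lemma 1.2\--b and Corollary 1.5\--a of \cite{Regev2}. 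I would stress that this first assertion lives entirely in the ungraded algebra and is therefore insensitive to which of $E_{k^*}$, $E_\infty$, $E_k$ has been fixed.

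For the ``moreover'' part I would substitute an arbitrary homogeneous element $b$ of $\Z_2$\--degree $0$ for $y_1$. By the first part $b^p=\lambda^p.1_E$, where $\lambda$ is the scalar part of $b$, so $b^{pq}=(b^p)^q=(\lambda^p.1_E)^q=\lambda^{pq}.1_E$. Since $\lambda^p\in F$ and every element of $F$ satisfies $\mu^q=\mu$, we get $(\lambda^p)^q=\lambda^p$, whence $b^{pq}-b^p=(\lambda^{pq}-\lambda^p).1_E=0$. As this holds for every admissible substitution of $y_1$, we conclude $y_1^{pq}-y_1^p\in T_2(E)$. The only thing to verify is that the formula for $b^p$ applies to degree\--$0$ substitutions in each of the three gradings, but this is automatic because the first assertion is grading\--free; no separate argument per grading is required.
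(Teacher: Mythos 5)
Your proposal is correct. The paper does not actually prove this lemma --- it simply records it as ``an easy consequence of Lemma 1.2-b and Corollary 1.5-a of \cite{Regev2}'' --- and your argument (odd part squares to zero by anticommutativity, even part plus scalar is central so Frobenius is additive there, hence $(\lambda.1_E+a)^p=\lambda^p.1_E$, and then $\mu^q=\mu$ in $F$ gives $y_1^{pq}-y_1^p\in T_2(E)$) is precisely the standard derivation being cited; you also correctly read the statement's ``$a^p=\lambda^p.1_E$'' as a typo for $(\lambda.1_E+a)^p=\lambda^p.1_E$, and correctly observe that the first assertion is independent of which homogeneous grading is fixed.
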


In light of Lemma \ref{id2}, we have $z_{1}^{p} \in T_{2}(E)$.

\begin{def.}

Let us consider $a = e_{i_{1}}\cdots e_{i_{n}} \in B\cap E^{*}$. We shall define the \textit{support} of $a$ as the set $supp(a) = \{e_{i_{1}},\ldots,e_{i_{n}}\}$.
We define the \textit{length-support} of $a$ as $wt(a): = |supp(a)|$. We set
$supp(1_{E}) = \emptyset$ and $wt(1_{E}) = 0$.
\end{def.}

\begin{def.}
For every $g = \sum_{i=1}^{n} \lambda_{i}a_{i} \in E - \{0\}$,
we define the \textit{support} of $g$ as $supp(g) :=
\cup_{i=1}^{n}supp(a_{i})$, and the \textit{length-support} of $g$ as
$wt(g):= max\{wt(a_{i})| i = 1,\cdots,n\}$. We shall define the \textit{dominating part} of $g$
as $dom(g) := \sum_{wt(a_{i}) = wt(g)}
\lambda_{i}a_{i}$.
\end{def.}

Let $B = \{y_{1},\cdots,y_{n},\cdots, z_{1},\cdots,z_{n},
\cdots,\cdots,[x_{1},x_{2}],[x_{1},x_{3}],\cdots,\newline
[x_{2},x_{3}],[x_{2},x_{4}],\cdots,
[x_{j_{1}},x_{j_{2}},x_{j_{3}}],\cdots,\cdots,[x_{j_{1}},\cdots,x_{j_{n}}],\cdots\}$
be an ordered linear basis for the subspace generated by $X$ and
commutators $[x_{i_{1}},\cdots,x_{i_{n}}], n = 2,3,\cdots$.

By Poincar\'e-Birkhoff-Witt's theorem we have the following polynomials

\begin{center}\label{equa}
$x_{i_{1}}^{a_{1}}\ldots
x_{i_{n_{1}}}^{a_{n_{2}}}[x_{j_{1}},\ldots,x_{j_{l}}]^{b_{1}}\ldots[x_{r_{1}},\ldots,x_{r_{t}}]^{b_{n_{2}}}$,
\end{center}
where $a_{1},\ldots,a_{n_{1}},b_{1},\ldots,b_{n_{2}}$ are
non-negative integers, form a linear basis of $F\langle X \rangle$.
$x_{i_{1}},\cdots,x_{i_{n_{1}}},[x_{j_{1}},\cdots,x_{j_{l}}],
\cdots,\newline [x_{r_{1}},\cdots,x_{r_{t}}] \in B$, and $x_{i_{1}}
< \cdots < x_{i_{n_{1}}} < [x_{j_{1}},\cdots,x_{j_{l}}] < \cdots <
[x_{r_{1}},\cdots,x_{r_{t}}]$.

From now on every polynomial of $F\langle X \rangle/T_2(E)$ will be
written as a linear combination of elements of the latter basis
which will be denoted by $Pr(X)$. Of course, due to Lemmas \ref{id1}
and \ref{id1.5} every element of $Pr(X)$ may be written modulo the
graded identities as a linear combination of polynomials of type
\[(\prod_{r = 1}^{n} y_{j_{r}}^{a_{j_{r}}})(\prod_{r = 1}^{m}
z_{i_{r}}^{b_{i_{r}}})[x_{t_{1}},x_{t_{2}}]\cdots
[x_{t_{2l-1}},x_{t_{2l}}] \in Pr(X).\]

We consider now the following definition.


\begin{def.}
Let $a = (\prod_{r = 1}^{n} y_{j_{r}}^{a_{j_{r}}})(\prod_{r =
1}^{m} z_{i_{r}}^{b_{i_{r}}})[x_{t_{1}},x_{t_{2}}]\cdots
[x_{t_{2l-1}},x_{t_{2l}}] \in Pr(X)$. We shall denote:
\begin{description}
\item $beg(a): = (\prod_{r = 1}^{n} y_{j_{r}}^{a_{j_{r}}})(\prod_{r =
1}^{m} z_{i_{r}}^{b_{i_{r}}})$ and $\psi(a) := x_{t_{1}}\cdots
x_{t_{2l}}$;
\item $\Pi(Y)(a) := (\prod_{r = 1}^{n} y_{j_{r}}^{a_{j_{r}}})$ and
$\Pi(Z)(a) := (\prod_{r = 1}^{m} z_{i_{r}}^{b_{i_{r}}})$;
\item $pr(z)(a) = z_{i_{1}}$ (if $\Pi(Z)(a) \neq 1$);
\item $Deg_{x_{i}} a$: the number of times in which the variable $x_{i}$
appears in $beg(a)\psi(a)$;
\item $deg_{Y} a:= \sum_{y \in Y} Deg_{y}(a)$, $deg_{Z} a:= \sum_{z \in Z} Deg_{z}(a)$
and $deg a := deg_{Z} a + deg_{Y} a$;
\item $\mathcal{V}(a) := \{x \in X | Deg_{x}(a) > 0\}$;
\item $Yyn(a) := \{x \in \mathcal{V}(a) \cap Y| Deg_{x}(beg (a)) > 0, Deg_{x}(\psi(a))
= 0\}$;
\item $Yyy(a) := \{x \in \mathcal{V}(a) \cap Y| Deg_{x}(beg (a)) > 0, Deg_{x}(\psi(a))
> 0\}$;
\item $Yny(a) := \{x \in \mathcal{V}(a) \cap Y| Deg_{x}(beg (a)) = 0, Deg_{x}(\psi(a))
> 0\}$;
\item $Zyn(a) := \{x \in \mathcal{V}(a) \cap Z| Deg_{x}(beg (a)) > 0, Deg_{x}(\psi(a))
= 0\}$;
\item $Zyy(a) := \{x \in \mathcal{V}(a) \cap Z| Deg_{x}(beg (a)) > 0, Deg_{x}(\psi(a))
> 0\}$;
\item $Zny(a) := \{x \in \mathcal{V}(a) \cap Z| Deg_{x}(beg (a)) = 0, Deg_{x}(\psi(a))
> 0\}$.
\end{description}
\end{def.}

\begin{def.}
A linear combination of elements of $Pr(X)\cap F\langle
y_{1},\ldots,y_{n}\rangle$ $f = \sum_{j=1}^{l}\lambda_{j}m_{j}$, where $\psi(m_{1}) = \cdots = \psi(m_{l}) = 1$, is called \textit{$p$-polynomial} if $Deg_{y_{i}} m_{j} \equiv 0$ mod $p$ and $Deg_{y_{i}} m_{j} < qp$ for every $i \in \{1,\ldots,n\}$ e $j \in
\{1,\ldots,l\}$. The vector space of $p$-polynomials in the variables $y_{1},\ldots,y_{n}$ will be denoted by $ppol(y_{1},\ldots,y_{n})$.
\end{def.}
\begin{obs}
It is easy to note that $dim (ppol(y_{1},\ldots,y_{n})) = q^{n}$.
\end{obs}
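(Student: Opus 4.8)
The plan is to write down an explicit basis of $ppol(y_{1},\ldots,y_{n})$ and count it. By definition a $p$-polynomial is a linear combination of monomials $m_{j}\in Pr(X)\cap F\langle y_{1},\ldots,y_{n}\rangle$ with $\psi(m_{j})=1$, and the requirement $\psi(m_{j})=1$ means that no commutator factors occur, so each $m_{j}$ is a pure ordered power product $y_{1}^{a_{1}}\cdots y_{n}^{a_{n}}$. The two constraints $Deg_{y_{i}}m_{j}\equiv 0 \pmod p$ and $Deg_{y_{i}}m_{j}<qp$ say exactly that each exponent $a_{i}$ lies in $\{0,p,2p,\ldots,(q-1)p\}$. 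First I would record that the set
\[
\mathcal{M}=\{\,y_{1}^{pb_{1}}\cdots y_{n}^{pb_{n}}\mid 0\le b_{i}\le q-1,\ 1\le i\le n\,\}
\]
spans $ppol(y_{1},\ldots,y_{n})$ by definition and consists of pairwise distinct elements of the PBW basis $Pr(X)$, hence is linearly independent in $F\langle X\rangle$. Counting is then immediate: there are $q$ admissible values of $b_{i}$ for each of the $n$ variables, so $|\mathcal{M}|=q^{n}$ and $\dim_{F}ppol(y_{1},\ldots,y_{n})=q^{n}$.

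The only point that is more than bookkeeping --- and the reason for the cut-off $a_{i}<qp$, which originates from $y_{1}^{pq}-y_{1}^{p}\in T_{2}(E)$ in Lemma \ref{id2} --- is that these monomials remain linearly independent modulo $T_{2}(E)$, so that $\mathcal{M}$ genuinely measures a piece of the relatively free algebra and not merely of $F\langle X\rangle$. To check this I would evaluate each $y_{i}$ at an even element $\lambda_{i}1_{E}+u_{i}$ with $u_{i}\in E^{*}$; by Lemma \ref{id2} one has $(\lambda_{i}1_{E}+u_{i})^{p}=\lambda_{i}^{p}1_{E}$, so $y_{i}^{pb_{i}}$ is sent to $(\lambda_{i}^{p})^{b_{i}}1_{E}$. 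Writing $\mu_{i}=\lambda_{i}^{p}$ and using that the Frobenius map is a bijection of the finite field $F$, a $p$-polynomial $\sum \gamma_{(b_{1},\ldots,b_{n})}\,y_{1}^{pb_{1}}\cdots y_{n}^{pb_{n}}$ is carried to $\bigl(\sum \gamma_{(b_{1},\ldots,b_{n})}\,\mu_{1}^{b_{1}}\cdots\mu_{n}^{b_{n}}\bigr)1_{E}$, where each $\mu_{i}$ independently ranges over all of $F$.

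The main (and essentially only) obstacle is this final independence step, which I would settle by a Vandermonde-type argument. Since every $b_{i}$ satisfies $0\le b_{i}\le q-1<q=|F|$, the polynomial $\sum \gamma_{(b_{1},\ldots,b_{n})}\,\mu_{1}^{b_{1}}\cdots\mu_{n}^{b_{n}}$ has degree at most $q-1$ in each $\mu_{i}$; if it vanishes under all substitutions in $F^{n}$ then it must be the zero polynomial, whence every coefficient $\gamma_{(b_{1},\ldots,b_{n})}$ equals $0$. Thus $\mathcal{M}$ is linearly independent modulo $T_{2}(E)$ as well, and $ppol(y_{1},\ldots,y_{n})$ has dimension exactly $q^{n}$ both in $F\langle X\rangle$ and in its image inside $F\langle X\rangle/T_{2}(E)$.
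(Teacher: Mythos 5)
Your first paragraph is exactly the intended argument: the admissible exponent vectors are the $(pb_{1},\ldots,pb_{n})$ with $0\le b_{i}\le q-1$, giving $q^{n}$ distinct PBW-basis monomials, so the count is correct and matches the paper (which states the observation without proof; the dimension is taken inside $F\langle X\rangle$, not in the quotient). The remaining two paragraphs establish the stronger fact that these monomials stay independent modulo $T_{2}(E)$ --- this is not needed for the observation itself, but your Frobenius-plus-Vandermonde argument is sound and in effect gives an alternative proof of Proposition~\ref{id5} and Corollary~\ref{cor1}, which the paper imports from \cite{Fonseca}.
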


The next two results may be found in \cite{Fonseca} (see
\cite{Fonseca}, Proposition 4.5 and Corollary 4.6).

\begin{prop}\label{id5}
If $f \in T_{2}(E)$ is a $p$-polynomial, then $f$ is 0.
\end{prop}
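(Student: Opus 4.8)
The plan is to detect the (non-)identity $f$ using only \emph{scalar} substitutions, thereby reducing the whole question to the linear independence of certain power functions on $F=\mathbb{F}_q$. Write $f=\sum_{j=1}^{l}\lambda_{j}m_{j}$ with $m_{j}=\prod_{i=1}^{n}y_{i}^{a_{ij}}$; by the definition of a $p$-polynomial each exponent satisfies $a_{ij}\equiv 0\ (\mathrm{mod}\ p)$ and $a_{ij}<qp$, so we may write $a_{ij}=p\,t_{ij}$ with $t_{ij}\in\{0,1,\ldots,q-1\}$. Since the $m_{j}$ are distinct basis monomials of $Pr(X)$, the exponent tuples $(a_{1j},\ldots,a_{nj})$ are pairwise distinct, and hence so are the tuples $\mathbf{t}^{(j)}:=(t_{1j},\ldots,t_{nj})\in\{0,\ldots,q-1\}^{n}$.

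First I would substitute $y_{i}\mapsto\lambda_{i}1_{E}$ for arbitrary $\lambda_{i}\in F$. Each $\lambda_{i}1_{E}$ is homogeneous of $\Z_2$-degree $0$, so this is an admissible graded substitution; as $f\in T_{2}(E)$ the resulting element of $E$ must vanish. Since $m_{j}(\lambda_{1}1_{E},\ldots,\lambda_{n}1_{E})=\big(\prod_{i=1}^{n}\lambda_{i}^{a_{ij}}\big)1_{E}$, we obtain the scalar relation
\[\sum_{j=1}^{l}\lambda_{j}\prod_{i=1}^{n}\lambda_{i}^{a_{ij}}=0\qquad\text{for all }(\lambda_{1},\ldots,\lambda_{n})\in F^{n}.\]
Thus it remains to prove that the functions $F^{n}\to F$ given by $(\lambda_{1},\ldots,\lambda_{n})\mapsto\prod_{i=1}^{n}\lambda_{i}^{p\,t_{ij}}$, for the distinct tuples $\mathbf{t}^{(j)}$, are linearly independent over $F$; this will force every $\lambda_{j}=0$, i.e. $f=0$.

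The heart of the argument, and the step where both defining conditions on the exponents are used, is this linear independence. For a single variable, the $q$ functions $\lambda\mapsto\lambda^{pt}$ with $t\in\{0,\ldots,q-1\}$ are linearly independent: writing the Frobenius bijection $\phi(\lambda)=\lambda^{p}$ of $F$, a dependence $\sum_{t}c_{t}\lambda^{pt}=0$ reads $h(\phi(\lambda))=0$ with $h(\mu)=\sum_{t}c_{t}\mu^{t}$, so $h$ vanishes on all of $F=\phi(F)$; as $\deg h\le q-1$ and $|F|=q$, all $c_{t}=0$. (Here $a_{ij}\equiv 0\ (\mathrm{mod}\ p)$ is what lets us pass to $\phi$, while $a_{ij}<qp$ keeps the exponents $t_{ij}$ inside $\{0,\ldots,q-1\}$, so that distinct monomials remain distinct after dividing the exponents by $p$.) The multivariate statement then follows by induction on $n$: collecting the terms of a putative dependence according to the power of $\lambda_{n}$ and invoking the one–variable case forces each inner coefficient—itself a dependence among the $(n-1)$-variable functions—to vanish, and the inductive hypothesis finishes the proof.

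I expect the only real obstacle to be the bookkeeping in this linear–independence step; everything else is a direct evaluation. Note that scalar substitutions already suffice, so Lemma \ref{id2} is not strictly needed here, although one could equally substitute $y_{i}\mapsto\lambda_{i}1_{E}+a_{i}$ with $a_{i}\in E^{*}\cap E^{0}$ and use $x_{i}^{a_{ij}}=\lambda_{i}^{a_{ij}}1_{E}$ to arrive at exactly the same scalar relation.
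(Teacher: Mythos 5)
Your proof is correct. A point of comparison worth knowing: the paper does not prove this proposition at all --- it imports it verbatim from Fonseca's paper (Proposition 4.5 there), together with the companion Corollary \ref{cor1} asserting that a non-zero $p$-polynomial can be detected by a scalar substitution $y_i\mapsto\alpha_i 1_E$. Your argument is a complete, self-contained proof, and it in fact establishes that stronger corollary simultaneously: you show that the $q^n$ functions $(\lambda_1,\ldots,\lambda_n)\mapsto\prod_i\lambda_i^{pt_i}$ with $t_i\in\{0,\ldots,q-1\}$ are linearly independent in the space of maps $F^n\to F$ (a space of dimension exactly $q^n$, so they are even a basis), whence a $p$-polynomial vanishing under all scalar substitutions must have all coefficients zero. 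Both hypotheses in the definition of a $p$-polynomial are used precisely where you say: divisibility of the exponents by $p$ lets you precompose with the Frobenius bijection $\lambda\mapsto\lambda^p$, and the bound $\mathrm{Deg}_{y_i}m_j<qp$ keeps the reduced exponents in $\{0,\ldots,q-1\}$, so the auxiliary polynomial $h$ of degree at most $q-1$ with $q$ roots is identically zero, and also so that distinct monomials stay distinct after dividing exponents by $p$. The only point I would ask you to make explicit is that the $m_j$ are pairwise distinct elements of the PBW basis $Pr(X)$ (so that ``$f$ is $0$'' is equivalent to all $\lambda_j$ vanishing); with that said, the proof is complete, and your closing remark is also right that Lemma \ref{id2} is not needed since $\lambda_i 1_E$ already lies in the degree-$0$ component for every homogeneous $\Z_2$-grading of $E$.
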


\begin{corolario}\label{cor1}
If $f(y_{1},\ldots,y_{m})$ is a non-zero $p$-polynomial, then there
exist $\alpha_{1},\ldots,\alpha_{m} \in F$ such that
$f(\alpha_{1}.1_{E},\ldots,\alpha_{m}.1_{E}) \neq 0$.
\end{corolario}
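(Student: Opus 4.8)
The plan is to derive the statement from Proposition \ref{id5} by showing that, for a $p$-polynomial, every admissible graded substitution already agrees with a substitution by scalar multiples of $1_E$. First I would argue by contraposition: since $f(y_1,\ldots,y_m)$ is a non-zero $p$-polynomial, the contrapositive of Proposition \ref{id5} gives $f\notin T_2(E)$. Hence there is a $\Z_2$-graded homomorphism $\FF\to E$ under which $f$ does not vanish; because each $y_i$ has $\Z_2$-degree $0$, such a homomorphism sends $y_i$ to some $w_i\in E^0$, and we have $f(w_1,\ldots,w_m)\neq 0$.

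Next I would exploit the defining feature of a $p$-polynomial. Write $f=\sum_{j=1}^{l}\lambda_j m_j$ with $m_j=y_1^{a_{j,1}}\cdots y_m^{a_{j,m}}$ and $\psi(m_j)=1$; by definition every exponent satisfies $a_{j,i}\equiv 0\pmod p$. Decompose each substituted element as $w_i=\lambda_i 1_E+a_i$ with $\lambda_i\in F$ and $a_i\in E^*$. By Lemma \ref{id2} we have $w_i^{p}=\lambda_i^{p}1_E$, so for every exponent, being a multiple of $p$,
\[ w_i^{a_{j,i}}=(w_i^{p})^{a_{j,i}/p}=\lambda_i^{a_{j,i}}1_E. \]
These values are scalar multiples of $1_E$, hence central, so each monomial collapses to $m_j(w_1,\ldots,w_m)=\bigl(\prod_{i}\lambda_i^{a_{j,i}}\bigr)1_E=m_j(\lambda_1 1_E,\ldots,\lambda_m 1_E)$, and therefore $f(w_1,\ldots,w_m)=f(\lambda_1 1_E,\ldots,\lambda_m 1_E)$.

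Combining the two steps finishes the argument: setting $\alpha_i:=\lambda_i$ we obtain $f(\alpha_1 1_E,\ldots,\alpha_m 1_E)=f(w_1,\ldots,w_m)\neq 0$. I do not expect a genuine obstacle, since the substance is already contained in Proposition \ref{id5} and Lemma \ref{id2}; the only points needing care are to make sure that the non-vanishing substitution produced by Proposition \ref{id5} really lands in $E^0$ (so that the decomposition $w_i=\lambda_i 1_E+a_i$ with $a_i\in E^*$ is available for Lemma \ref{id2}), and to record that the congruence $a_{j,i}\equiv 0\pmod p$ is precisely what lets each $w_i^{a_{j,i}}$ be replaced by its scalar part. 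As an aside, one could also prove the corollary directly via the Frobenius bijection $\alpha\mapsto\alpha^{p}$ of the finite field $F$, which rewrites $f$ evaluated at scalars as a reduced polynomial of degree $<q$ in each new variable and hence as a non-zero polynomial function on $F^{m}$; but the route through Proposition \ref{id5} is shorter.
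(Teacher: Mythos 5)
The paper itself offers no proof of this corollary: it is quoted from Fonseca (Propositions 4.5 and Corollary 4.6 of that reference), so there is no in-paper argument to compare yours against. That said, your derivation is correct and is the natural way to obtain the corollary from Proposition \ref{id5}: contraposition yields a graded substitution $y_i\mapsto w_i$ with $f(w_1,\ldots,w_m)\neq 0$, and since every exponent of a $p$-polynomial is a multiple of $p$, Lemma \ref{id2} collapses each $w_i^{a_{j,i}}$ to the central element $\lambda_i^{a_{j,i}}1_E$, so the arbitrary substitution already coincides with the scalar one. Two small remarks. First, the point you flag as needing care --- that the $w_i$ land in $E^0$ --- is actually immaterial: the decomposition $w=\lambda 1_E+a$ with $a\in E^{*}$ exists for \emph{every} element of $E$, not just for elements of $E^0$ (and in any case $1_E$ has degree $0$ in all the homogeneous gradings considered, so $E^0=F1_E\oplus(E^0\cap E^{*})$). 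Second, Lemma \ref{id2} as printed says ``$a^{p}=\lambda^{p}1_E$'', which only makes sense when read as $(\lambda 1_E+a)^{p}=\lambda^{p}1_E$; your use of it adopts this (clearly intended) reading, which is worth stating explicitly. Your parenthetical alternative --- reducing $f$ on scalars via the Frobenius bijection to a polynomial of degree $<q$ in each variable and invoking that such a nonzero polynomial is a nonzero function on $F^{m}$ --- is in fact the self-contained route that proves Proposition \ref{id5} and the corollary simultaneously, so it is the more informative argument if one does not want to take the proposition as a black box; but as a derivation of the corollary from the stated proposition, your main argument is complete.
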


\section{The set $SS$ and its total order}

We shall construct a subset of $Pr(X)$ that will be useful in the descriptions of the relatively free $\mathbb{Z}_2$-graded algebra of $E$ that we are going to study through the paper.

\begin{def.}
We say $a \in Pr(X)$ belongs to $SS$ if $Deg_{x}beg(a) \leq
p - 1$ for every $x \in X$ and $\psi(a) = 1$ or $a$ is multilinear.
\end{def.}

In \cite{Fonseca} Proposition 6.1 the author describes the relatively free algebra \[\frac{F\langle X \rangle}{\langle
[x_{1},x_{2},x_{3}], z_{1}^{p}, y_{1}^{pq} - y_{1}^{p}
\rangle^{T_{\Z_2}}}.\] In particular, we have the next result (see \cite{Fonseca} proposition 6.1).

\begin{prop}\label{id3}
Let $f = \sum_{i=1}^{n}\lambda_{i}v_{i}$ be a linear combination of elements of $Pr(X)$. Then $f$ may be written modulo $\langle [x_{1},x_{2},x_{3}],
z_{1}^{p}, y_{1}^{pq} - y_{1}^{p} \rangle_{T_{2}}$, as:
\begin{center}
$\sum_{i=1}^{m}f_{i}u_{i}$,
\end{center}
where $f_{1},\ldots,f_{m}$ are $p$-polynomials and
$u_{1},\ldots,u_{m}$ are distinct elements of $SS$.
\end{prop}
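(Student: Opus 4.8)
The plan is to reduce an arbitrary linear combination of $Pr(X)$-elements to the stated normal form by a two-stage process: first separate each monomial into a ``$y$-part'' and a ``remainder'', then collect the $y$-parts into $p$-polynomials attached to distinct $SS$-representatives. Recall that every $v_i \in Pr(X)$ has the form $(\prod y_{j_r}^{a_{j_r}})(\prod z_{i_r}^{b_{i_r}})[x_{t_1},x_{t_2}]\cdots[x_{t_{2l-1}},x_{t_{2l}}]$. The first step is to use the identities available modulo $\langle[x_1,x_2,x_3],z_1^p,y_1^{pq}-y_1^p\rangle_{T_2}$ to control the exponents and the commutator tails. By Lemma \ref{id2} (giving $z_1^p \in T_2(E)$) every $z$-exponent $b_{i_r}$ may be reduced below $p$, and in fact since $z_1^p$ is a generator we can assume each $z$-variable in $beg$ occurs with exponent $\leq p-1$; likewise $y_1^{pq}-y_1^p$ lets us reduce every $y$-exponent into the range dictated by the $p$-polynomial definition.

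The key idea is then to write the $y$-exponents of each monomial in their base-$p$ decomposition. For a single $y$-variable appearing with exponent $a$, write $a = p\cdot c + d$ with $0 \leq d \leq p-1$ and $0 \leq c$; using $y_1^{pq}-y_1^p$ one arranges that the ``high part'' contributes a factor $y^{pc}$ with $0 \leq pc < pq$, which is exactly a $p$-polynomial factor, while the ``low part'' $y^d$ with $d \leq p-1$ becomes part of the $SS$-representative $beg$. So I would factor each monomial $v_i$ as $(\text{$p$-polynomial in its }y\text{'s}) \cdot u_i$, where $u_i$ has every $beg$-exponent at most $p-1$. The only subtlety is that this factorization is multiplicative across distinct $y$-variables, so one handles one variable at a time and takes the product; the resulting prefactor is a product of single-variable $p$-polynomials, hence itself a $p$-polynomial, and it commutes with everything modulo $[x_1,x_2,x_3]$ because central powers $y^{pc}$ commute with all variables (here Lemma \ref{id1} and Lemma \ref{id1.5}, giving $[y^p,x]\in\langle[x_1,x_2,x_3]\rangle$, are essential).

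After this factorization each $v_i$ equals $f_i u_i$ with $f_i$ a $p$-polynomial and $u_i \in SS$. It remains to merge terms sharing the same $SS$-representative: whenever $u_i = u_j$ for $i \neq j$, replace $\lambda_i f_i u_i + \lambda_j f_j u_j$ by $(\lambda_i f_i + \lambda_j f_j) u_i$, and note that a sum of $p$-polynomials is again a $p$-polynomial (the defining congruence and bound conditions are linear). Collecting over the equivalence classes of equal $u_i$'s yields $\sum_{i=1}^m f_i u_i$ with the $u_i$ now distinct elements of $SS$, as required. To see that the $u_i$ genuinely lie in $SS$ one checks the two defining alternatives: if the monomial has trivial commutator tail then $\psi(u_i)=1$ and the $beg$-exponent bound $\leq p-1$ holds by construction; if it has a nontrivial tail then one must instead arrive at a multilinear $u_i$, which is where the definition of $SS$ forces the separate treatment.

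The main obstacle I expect is precisely this last point: reconciling the $p$-polynomial/$SS$ split with the commutator tails. The $p$-polynomial factor is built only from $y$-powers, but the $SS$ definition demands that a representative carrying a commutator be \emph{multilinear}, not merely have bounded $beg$-exponents. So for monomials with nonempty $\psi$ I would need to argue that, modulo the three generating identities, any repeated variable inside or feeding a commutator can be pulled out as a central $p$-power (using $[y^p,x_2]\in\langle[x_1,x_2,x_3]\rangle_{T_2}$ from Lemma \ref{id1.5}, together with the consequence $[x_1,x_2][x_3,x_4]=[x_1,x_3][x_2,x_4]$ to rearrange commutator products), leaving behind a genuinely multilinear commutator-bearing tail. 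Verifying that this reduction always terminates in a multilinear $u_i$, and that the extracted powers assemble into a legitimate $p$-polynomial with exponents strictly below $pq$, is the delicate bookkeeping at the heart of the proof; the rest is the straightforward linear-algebra merging described above.
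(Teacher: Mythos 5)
A preliminary remark: the paper does not actually prove Proposition~\ref{id3}; it is imported verbatim from \cite{Fonseca} (Proposition 6.1 there), so your attempt can only be judged against the intended argument rather than a written proof.

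Three of your four stages are correct and constitute the real content of that argument: killing any $z$-exponent $\geq p$ via $z_1^p$; splitting each $y$-exponent as $pc+d$ with $d\leq p-1$, extracting $y^{pc}$ as a prefactor that is central modulo $[x_1,x_2,x_3]$ (Lemma~\ref{id1.5}) and reducing $pc$ below $pq$ via $y^{pq}=y^p$, so that the prefactors assemble into a monomial $p$-polynomial; and merging terms with equal $SS$-part, using that $p$-polynomials form a linear subspace. The genuine gap is your third stage. You read the definition of $SS$ as demanding that a commutator-bearing representative be multilinear \emph{as a whole}, and you propose to achieve this by pulling every repeated variable out of such a monomial as a central $p$-th power. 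That mechanism cannot work: in a monomial such as $y_1^2[y_1,z_1]$ the variable $y_1$ occurs in $beg$ with exponent $2<p$, so there is no $p$-th power to extract, and none of the three generators $[x_1,x_2,x_3]$, $z_1^p$, $y_1^{pq}-y_1^p$ removes the repetition. So under your reading of $SS$ the reduction you describe simply cannot be carried out, and such monomials are left unaccounted for.

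The way out is that the multilinearity clause is meant to constrain only the commutator part $\psi(a)$, not all of $a$ (compare the phrasing of Type-$0$, and note that the paper's linear-independence arguments explicitly take leading terms with $Yyy(LT(f)),Zyy(LT(f))\neq\emptyset$, i.e.\ elements of $SS$ in which a variable occurs both in $beg$ and inside a commutator, with $beg$-exponent up to $p-1$). Under that reading your ``delicate bookkeeping'' step collapses to a one-line observation: a variable repeated among the commutator entries annihilates the term, because $[x,y][x,z]\equiv[x,x][y,z]=0$ by the rearrangement identity of Lemma~\ref{id1.5} --- a fact you cite, but deploy for the wrong purpose. As written, the central step of your proof either attempts an impossible reduction or solves a non-problem; you should replace it by the annihilation argument just described and let monomials like $y_1^2[y_1,z_1]$ stand as legitimate elements of $SS$ after the exponent reduction of your second stage.
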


We are going to order a subset of $SS$ with the right lexicographic order while the total order on $SS$ was presented in \cite{Fonseca}.

\begin{def.}
Let $u,v \in SS$ such that $\psi(u) = \psi(v) = 1$. We say $u
<_{lex-rig} v$ when $Deg_{x_{i}} u < Deg_{x_{i}} v$ for some
$x_{i} \in X$ and $Deg_{x} u = Deg_{x} v$ for every $x
> x_{i}$ (with respect to the ordered basis given by $X$ and the commutators).
\end{def.}

\begin{def.}
Given $u,v \in SS$, we say $u < v$ when:
\begin{description}
\item $deg u < deg v$ or
\item $deg u = deg v$, but $beg(u) <_{lex-rig} beg(v)$ or
\item $deg u = deg v, beg(u) = beg(v)$, but $\psi(u) <_{lex-rig}
\psi(v)$.
\end{description}
\end{def.}

\begin{def.}
Let $f = \sum_{i = 1}^{n}\lambda_{i}u_{i}$ be a linear combination of distinct elements of $SS$. We shall call \textit{leading term} of $f$, denoted by $LT(f)$, the element $u_{i} \in \{u_{1},\ldots,u_{n}\}$ such that
$u_{j} \leq u_{i}$ for every $j \in \{1,\ldots,n\}$.
\end{def.}

\begin{def.}
Let $f = \sum_{i = 1}^{n}\lambda_{i}u_{i}$ a linear combination of distinct elements of $SS$. We shall call $u_{i}$ \textit{bad} if the following conditions hold:
\begin{description}
\item $Deg_{x}(u_{i}) = Deg_{x}(LT(f))$ for every $x \in X$;
\item If $deg_{Z}(beg(LT(f))) > 0$ and $z \in Z - \{pr(z)(LT(f))\}$, then $Deg_{z} beg(LT(f)) = Deg_{z}
beg(u_{i})$;
\item If $deg_{Z}(beg(LT(f))) > 0$ and $z = pr(z)(LT(f))$, then $Deg_{z}(beg(u_{i})) + 1 =
Deg_{z}(beg(LT(f)))$;
\item For every $x \in Y$, we have $Deg_{x} beg(LT(f)) \leq Deg_{x}
beg(u_{i})$.
\end{description}
If $f$ has a bad term, we shall denote by $LBT(f)$ its greater bad term.
\end{def.}
\begin{obs}\label{observacao}
Notice that if $f$ has a bad term $u_{i}$, then $u_{i} < LT(f)$.
Moreover, there exists a variable $x \in Y$ such that
$Deg_{x}(beg(LT(f))) < Deg_{x}(beg(u_{i}))$.
\end{obs}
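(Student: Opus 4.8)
The plan is to establish the ``Moreover'' part first, from which the strict inequality $u_i < LT(f)$ will follow at once. Write $z_0 := pr(z)(LT(f))$, and for $a \in Pr(X)$ set $|beg(a)| := \sum_{x \in X}Deg_x(beg(a))$ and $|\psi(a)| := \sum_{x \in X}Deg_x(\psi(a))$, so that $deg(a) = |beg(a)| + |\psi(a)|$. The one structural fact I would record at the outset is that, since $\psi(a) = x_{t_1}\cdots x_{t_{2l}}$ is the juxtaposition of the variables of the $l$ commutators of $a$, the number $|\psi(a)|$ is always \emph{even}.

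Next I would read off two global consequences of the four conditions defining a bad term (numbered (i)--(iv) in the order listed). Condition (i) gives $deg(u_i) = deg(LT(f))$, so that, by the definition of the order on $SS$, comparing $u_i$ with $LT(f)$ amounts to comparing $beg(u_i)$ with $beg(LT(f))$ in the right-lexicographic order. Summing (ii) over all $z \neq z_0$ and adjoining (iii) for $z = z_0$ produces the key \emph{odd} discrepancy on the $Z$-part of the $beg$'s,
\[
\sum_{z \in Z}Deg_z(beg(LT(f))) = \Bigl(\sum_{z \in Z}Deg_z(beg(u_i))\Bigr) + 1,
\]
valid whenever $deg_Z(beg(LT(f))) > 0$, which is exactly the situation in which $z_0$ is defined.

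The heart of the argument is then a parity count. From $deg(u_i) = deg(LT(f))$ together with the evenness of $|\psi(u_i)|$ and $|\psi(LT(f))|$ I obtain $|beg(u_i)| \equiv |beg(LT(f))| \pmod 2$. Splitting each $|beg|$ into its $Y$- and $Z$-contributions and substituting the displayed identity, the $Z$-parts differ by the odd number $1$, so the difference of the $Y$-contributions
\[
\sum_{y \in Y}Deg_y(beg(LT(f))) - \sum_{y \in Y}Deg_y(beg(u_i))
\]
is forced to be odd, hence nonzero. But condition (iv) makes every summand $Deg_y(beg(LT(f))) - Deg_y(beg(u_i))$ nonpositive, and a nonzero sum of nonpositive integers must contain a strictly negative term. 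That term supplies a variable $x \in Y$ with $Deg_x(beg(LT(f))) < Deg_x(beg(u_i))$, which is precisely the ``Moreover'' assertion.

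The strict inequality $u_i < LT(f)$ then comes for free: the variable $x$ just found shows $beg(u_i) \neq beg(LT(f))$, hence $u_i \neq LT(f)$, and since $LT(f)$ is by definition the largest term of $f$ we get $u_i < LT(f)$. The only delicate point is the degenerate case $deg_Z(beg(LT(f))) = 0$, where (ii)--(iii) are vacuous and the displayed odd discrepancy is unavailable; here I would note that if the ``Moreover'' claim failed then $beg(u_i)$ and $beg(LT(f))$ would agree on $Y$, while the maximality of $LT(f)$ (any $z$ present in $beg(u_i)$ but absent from $beg(LT(f))$ would make $beg(u_i) >_{lex-rig} beg(LT(f))$) forces agreement on $Z$ too, whence $u_i = LT(f)$ -- a term excluded from the discussion of bad terms. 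I expect this bookkeeping for the degenerate case, rather than the parity count itself, to be the only genuinely fiddly part of the write-up.
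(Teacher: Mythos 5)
The paper offers no argument for this observation (it is a bare ``Notice that''), so there is no proof to compare against; judged on its own, your parity argument for the case $deg_{Z}(beg(LT(f)))>0$ is correct and is presumably the intended justification: condition (i) forces $deg(u_{i})=deg(LT(f))$, each $\psi$-part contributes an even total since it is a product of $2l$ variables, conditions (ii)--(iii) make the $Z$-parts of the two $beg$'s differ by exactly $1$, so the $Y$-parts of the $beg$'s differ by an odd (hence nonzero) amount, and condition (iv) then locates a variable $x\in Y$ with $Deg_{x}(beg(LT(f)))<Deg_{x}(beg(u_{i}))$. The deduction $u_{i}<LT(f)$ from this is also fine.

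The gap is exactly where you predicted it, in the degenerate case $deg_{Z}(beg(LT(f)))=0$, and your patch does not close it. You correctly force $Deg_{x}(beg(u_{i}))=Deg_{x}(beg(LT(f)))$ for all $x\in X$, but the final step ``whence $u_{i}=LT(f)$'' fails: two distinct elements of $SS$ can share $beg$ and all the $Deg_{x}$ data and differ only in how the variables of $\psi$ are paired into commutators. Concretely, take $LT(f)=[y_{1},y_{2}][y_{3},y_{4}]$ and $u_{i}=[y_{1},y_{3}][y_{2},y_{4}]$: conditions (ii)--(iii) are vacuous, (i) and (iv) hold trivially, so $u_{i}$ is literally a bad term, yet $Deg_{y}(beg(LT(f)))=Deg_{y}(beg(u_{i}))=0$ for every $y\in Y$, so the ``Moreover'' conclusion is false for it. (Note also that in this degenerate case $LT(f)$ itself satisfies all four conditions, so the leading term is not ``excluded from the discussion of bad terms'' by the definition, as you assert; in the nondegenerate case it is condition (iii) that rules it out.) The honest resolution is that the observation must be read with the standing hypothesis $deg_{Z}(beg(LT(f)))>0$ --- which is the only situation in which the paper ever invokes bad terms (Case 3 of the $E_{k}$ arguments, where $pr(z)(LT(f))$ is used explicitly) --- rather than that the degenerate case can be argued away.
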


\begin{def.}
Let $u \in SS$. We say $u$ is of \textit{Type-$0$} (or $u \in SS0$) if the following conditions hold:
\begin{description}
\item $\psi(u) = 1$;
\item $\Pi(Z)(u)$ is $1$ or a multilinear polynomial.
\end{description}

\end{def.}

In 8.1 de \cite{Fonseca}, Theorem 8.1 the author shows up a basis for the graded identities of $E_{can}$. In particular, we have the following.

\begin{teo}\label{ecan}
The $\mathbb{Z}_{2}$-graded identities of $E_{can}$ follow from the identities:
\begin{center}
$y_{1}y_{2} - y_{2}y_{1} , z_{1}z_{2} + z_{2}z_{1}, y_{1}z_{2} -
z_{2}y_{1}$ e $y_{1}^{pq} - y_{1}^{p}$.
\end{center}
\end{teo}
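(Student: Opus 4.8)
The plan is to prove the two inclusions $I\subseteq T_2(E_{can})$ and $T_2(E_{can})\subseteq I$, where $I:=\langle y_1y_2-y_2y_1,\, z_1z_2+z_2z_1,\, y_1z_2-z_2y_1,\, y_1^{pq}-y_1^p\rangle_{T_2}$. The first inclusion is the routine one: under the canonical grading the images of the $y$'s are even, hence central, and the images of the $z$'s are odd, so $[y_1,y_2]$ and $[y_1,z_2]$ vanish, any two odd elements anticommute so that $z_1z_2+z_2z_1$ vanishes, and $y_1^{pq}-y_1^p\in T_2(E)$ by Lemma \ref{id2}. Thus $I\subseteq T_2(E_{can})$, and the whole content of the theorem is the reverse inclusion.

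Before attacking it I would record that $I$ already contains the identities needed to switch on the structure theory of the previous section. Specializing $z_2\mapsto z_1$ in $z_1z_2+z_2z_1$ gives $2z_1^2\in I$, hence $z_1^2\in I$ (as $\operatorname{char}F=p>2$) and therefore $z_1^p=z_1^{p-2}z_1^2\in I$. Since $y$ is central modulo $I$, a short case check on the $\Z_2$-degrees of the three slots (using $[z_1,z_2]\equiv 2z_1z_2$ and $z_3z_1z_2\equiv z_1z_2z_3$ modulo $I$) shows $[x_1,x_2,x_3]\in I$. Consequently $\langle [x_1,x_2,x_3],\, z_1^p,\, y_1^{pq}-y_1^p\rangle_{T_2}\subseteq I$, so Proposition \ref{id3} may be applied modulo $I$.

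Now take $f\in T_2(E_{can})$. By Proposition \ref{id3} I may write $f\equiv\sum_i f_iu_i$ modulo $I$, with each $f_i$ a $p$-polynomial in the $y$'s and the $u_i$ distinct elements of $SS$. Inside each $u_i$ I would then use the extra relations of $I$ to simplify the $SS$-part: centrality of the $y$'s kills every commutator containing a $y$ and moves all $y$-powers to the front; anticommutativity turns each surviving commutator $[z_a,z_b]$ into $2z_az_b$ and annihilates any monomial in which a $z$ is repeated (because $z^2\in I$) or unsorted. The outcome is that modulo $I$ the polynomial $f$ becomes a linear combination of canonical monomials
$$y_{s_1}^{d_1}\cdots y_{s_k}^{d_k}\, z_{t_1}z_{t_2}\cdots z_{t_r},\qquad t_1<\cdots<t_r,$$
where the coefficient of each square-free sorted $z$-word is a polynomial in the commuting $y$'s which, by the decomposition above, splits as a $p$-polynomial times a monomial $\prod y^{d}$ with all $d<p$. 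I would keep this finer splitting rather than collapsing the $y$-part, because it is exactly what makes the detection step work.

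The crux is independence: I must show that such a canonical combination $g$ cannot lie in $T_2(E_{can})$ unless it is already $0$ in $F\langle X\rangle/I$; granting this, $f-g\in I\subseteq T_2(E_{can})$ forces $g\in T_2(E_{can})$, hence $g=0$ and $f\in I$. To detect a fixed canonical term I would substitute the $z$'s occurring in its $z$-word by \emph{distinct} generators $e_i$ of $E$ and all other $z$'s by $0$, so that distinct square-free $z$-words land on distinct signed basis elements of $E$ and thus become linearly independent; this isolates one $z$-word together with its $y$-coefficient. For the $y$-part I would substitute $y_j\mapsto\lambda_j 1_E+c_j$ with $c_j$ a square-zero even element: by Lemma \ref{id2} every $p$-th power collapses, $(\lambda_j 1_E+c_j)^p=\lambda_j^p 1_E$, so the $p$-polynomial factor is read off on scalars exactly as in Corollary \ref{cor1} and Proposition \ref{id5}, while the low-degree factor $\prod y^{d}$, $d<p$, survives in the nilpotent directions (here $d<p$ guarantees the relevant binomial coefficients are invertible) and separates the exponents $d$. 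The main obstacle is engineering these two families of substitutions so that they act simultaneously and without interference — the $z$-substitution producing the correct Grassmann basis element while the $c_j$ lie in fresh even directions disjoint from the $e_i$ used for the $z$'s — and then checking that the vanishing of all these evaluations forces, via Proposition \ref{id5}, every coefficient $p$-polynomial to be zero. Once that bookkeeping is in place the reverse inclusion, and hence the theorem, follows.
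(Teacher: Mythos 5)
Your overall route is the right one, and it is in fact the route this paper's machinery is built for: reduce modulo the candidate ideal $I$ to the normal form ``$p$-polynomial times element of $SS0$'' (this is exactly Proposition \ref{oo}), and then show that these normal forms are linearly independent modulo $T_2(E_{can})$ by explicit evaluation (this is exactly the Proposition of Subsection 5.1 and Lemma 6.1). Note, though, that the paper itself does not prove Theorem \ref{ecan}: it quotes it from Fonseca's paper \cite{Fonseca} (Theorem 8.1 there) and uses it as input, so there is no internal proof to compare against; what you are really reconstructing is the argument of \cite{Fonseca}, and your two preliminary reductions ($z_1^p\in I$, $[x_1,x_2,x_3]\in I$, hence Proposition \ref{id3} applies modulo $I$) agree verbatim with the remarks the paper makes right after stating the theorem.

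The genuine gap is in the detection step, in the one place where you commit to a concrete substitution. Sending $y_j\mapsto \lambda_j 1_E+c_j$ with $c_j$ a \emph{single} square-zero even element cannot separate the exponents $d<p$: since $c_j^2=0$, one gets $(\lambda_j+c_j)^d=\lambda_j^d+d\lambda_j^{d-1}c_j$, so the evaluation only records the function $\lambda\mapsto P(\lambda)$ and its formal derivative $\lambda\mapsto P'(\lambda)$ on the $q$ points of $F$, i.e.\ at most $2q$ linear conditions on a coefficient space of dimension $pq$ per variable. Concretely, for $p=q=3$ the nonzero normal form $(y_1^6+y_1^4+y_1^2)z_1=(y_1^3-y_1)^2z_1$ vanishes under every such substitution (as $(\lambda+c)^3-(\lambda+c)=-c$ and $c^2=0$), yet it is not a graded identity of $E_{can}$ (take $y_1=e_1e_2+e_3e_4$, whose square is $2e_1e_2e_3e_4\neq 0$). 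What is actually needed — and what the paper's Subsection 5.1 does — is to take $c_j$ to be a sum of $Deg_{y_j}(LT(f))$ pairwise disjoint products $e_re_s$, so that $c_j^{d}=d!\,e_{\ast}\cdots e_{\ast}\neq 0$ precisely when $d\le Deg_{y_j}(LT(f))<p$, and then to run a leading-term argument: the substitution is tailored to the largest $u_i$, its image has a nonzero dominating part of maximal length-support, and Corollary \ref{cor1} recovers the $p$-polynomial coefficient on the scalar part; all smaller terms have strictly smaller length-support and cannot cancel the dominating part. With that replacement (and the accompanying induction on the leading term, rather than a single simultaneous evaluation), your argument closes; as written, the separation claim in the $y$-direction is false.
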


Notice that $z_{1}^{2}$ is a consequence of $z_{1}z_{2} +
z_{2}z_{1}$. Hence $z_{1}^{p} \in T_{2}(E_{can})$. Note also that
$2z_{1}z_{2} = [z_{1},z_{2}]$ modulo $ T_{2}(E_{can})$ and
$[x_{1},x_{2},x_{3}] \in \langle y_{1}y_{2} - y_{2}y_{1} ,
z_{1}z_{2} + z_{2}z_{1}, y_{1}z_{2} - z_{2}y_{1} \rangle_{T_{2}}$.
In light of the previous comments and Theorem \ref{ecan}, it follows
easily the next result.

\begin{prop}\label{oo}
Let $f = \sum_{i=1}^{n}\lambda_{i}v_{i}$ be a linear combination of elements of $Pr(X)$. Then $f$ may be written modulo $\langle y_{1}y_{2} - y_{2}y_{1} ,
z_{1}z_{2} + z_{2}z_{1}, y_{1}z_{2} - z_{2}y_{1}, y_{1}^{pq} -
y_{1}^{p} \rangle_{T_{2}}$, as:
\begin{center}
$\sum_{i=1}^{m}f_{i}u_{i}$,
\end{center}
where $f_{1},\ldots,f_{m}$ are $p$-polynomials and
$u_{1},\ldots,u_{m}$ are distinct elements of $SS0$.
\end{prop}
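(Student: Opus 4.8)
The plan is to reduce Proposition \ref{oo} to the already-established Proposition \ref{id3} by showing that the two $T_2$-ideals
$$\langle [x_{1},x_{2},x_{3}], z_{1}^{p}, y_{1}^{pq} - y_{1}^{p}\rangle_{T_{2}}
\quad\text{and}\quad
\langle y_{1}y_{2} - y_{2}y_{1}, z_{1}z_{2} + z_{2}z_{1}, y_{1}z_{2} - z_{2}y_{1}, y_{1}^{pq} - y_{1}^{p}\rangle_{T_{2}}$$
coincide, and then converting an $SS$-normal form into an $SS0$-normal form. The first containment is recorded in the text preceding the statement: since $z_{1}^{p}$ follows from $z_{1}z_{2}+z_{2}z_{1}$ (via $z_1^2$), and $[x_1,x_2,x_3]$ lies in the second ideal, every generator of the first ideal is a consequence of the generators of the second, so the first ideal is contained in the second. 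For the reverse containment I would invoke Theorem \ref{ecan}, which asserts exactly that $T_2(E_{can})$ is generated by the four listed polynomials; combined with the fact that each of those four polynomials is a graded identity of $E_{can}$ and hence lies in the first ideal (which is contained in $T_2(E)=T_2(E_{can})$), one gets that the second ideal equals $T_2(E_{can})$ and is generated inside the first. So the two $T_2$-ideals are in fact both equal to $T_2(E_{can})$.

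Granting that the two ideals agree, Proposition \ref{id3} immediately gives that $f$ can be written modulo the relevant ideal as $\sum_{i=1}^{m} f_i u_i$ with the $f_i$ $p$-polynomials and the $u_i$ distinct elements of $SS$. The remaining task is to upgrade ``$u_i \in SS$'' to ``$u_i \in SS0$''. By definition an element of $SS0$ must satisfy $\psi(u)=1$ and have $\Pi(Z)(u)$ equal to $1$ or a multilinear polynomial. The condition $\psi(u_i)=1$ is handled by the identity $[x_1,x_2,x_3]$ together with Lemma \ref{id1.5}: modulo $T_2(E_{can})$ every commutator $[x_{t_1},x_{t_2}]$ in $\psi$ can be rewritten, since $[y_1,y_2]$, $[y_1,z_2]$ are identities and $[z_1,z_2]=2z_1z_2$ modulo the grading relation $z_1z_2+z_2z_1$, so all commutator factors collapse into products of variables, forcing $\psi(u_i)=1$ after reduction. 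For the multilinearity of $\Pi(Z)(u_i)$, I would use $z_1^p \in T_2(E_{can})$ to bound $Z$-exponents; more sharply, the relation $z_iz_j=-z_jz_i$ forces $z_i^2=0$, so any repeated odd variable kills the monomial and what survives has each $z_i$ appearing at most once, i.e. $\Pi(Z)(u_i)$ is multilinear.

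The main obstacle I anticipate is bookkeeping rather than conceptual: after reducing the commutator part $\psi$ to a product of variables, these newly created variable-factors must be reabsorbed into $beg$ and then re-run through the normal-form algorithm of Proposition \ref{id3} to restore the canonical $SS$ (now $SS0$) shape and to re-split off fresh $p$-polynomial coefficients, all while tracking that the multidegree in each $X^g$ is preserved. In particular one must verify that the passage $z_i^2 \equiv 0$ and $z_iz_j \equiv -z_jz_i$ is compatible with the $p$-polynomial factorization — namely that collapsing squares of odd variables and anticommuting them does not destroy the property that the coefficients $f_i$ remain $p$-polynomials in the even variables. This is exactly the kind of verification the author signals as ``it follows easily,'' so I would present it as a direct application of Theorem \ref{ecan} and Proposition \ref{id3} with the reduction rules above, rather than reproving the normal-form algorithm from scratch.
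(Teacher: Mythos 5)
Your route is essentially the one the paper takes in the remarks preceding the statement: show that $\langle [x_{1},x_{2},x_{3}], z_{1}^{p}, y_{1}^{pq}-y_{1}^{p}\rangle_{T_{2}}$ is contained in $\langle y_{1}y_{2}-y_{2}y_{1}, z_{1}z_{2}+z_{2}z_{1}, y_{1}z_{2}-z_{2}y_{1}, y_{1}^{pq}-y_{1}^{p}\rangle_{T_{2}}$ so that Proposition \ref{id3} applies modulo the larger ideal, and then use $[y_{1},y_{2}]\equiv[y_{1},z_{2}]\equiv 0$, $[z_{1},z_{2}]\equiv 2z_{1}z_{2}$ and $z_{1}^{2}\equiv 0$ to pass from $SS$ to $SS0$; your closing observation that these reductions cannot disturb the $p$-polynomial coefficients (which involve only even variables, while the extra reductions only kill terms or rewrite the commutator part and the odd variables) is also the right justification, and terms of $SS$ with $\psi\neq 1$ are multilinear anyway, so they carry trivial coefficients.

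The one genuine error is your claim that the two $T_{2}$-ideals \emph{coincide}. The argument you give for the reverse containment confuses ``is a graded identity of $E_{can}$'' (i.e.\ lies in $T_{2}(E_{can})$) with ``lies in $\langle [x_{1},x_{2},x_{3}], z_{1}^{p}, y_{1}^{pq}-y_{1}^{p}\rangle_{T_{2}}$'', and the asserted equality is in fact false: by Theorem \ref{teorema 8.2} the smaller ideal equals $T_{2}(E_{\infty})$, and $y_{1}y_{2}-y_{2}y_{1}$ belongs to $T_{2}(E_{can})$ but not to $T_{2}(E_{\infty})$ (substitute $y_{1}\mapsto e_{2}$, $y_{2}\mapsto e_{4}$, both of degree $0$ in $E_{\infty}$, to get $2e_{2}e_{4}\neq 0$). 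Fortunately only the containment you correctly established is needed --- a congruence modulo the smaller $T_{2}$-ideal is a fortiori a congruence modulo the larger one, which is all that is required to transfer the normal form of Proposition \ref{id3} --- so the proof stands once the equality claim (and the appeal to Theorem \ref{ecan} supporting it) is deleted. A second, minor slip: the collapse of $\psi(u_{i})$ to $1$ is driven by the degree-two relations $[y_{1},y_{2}]$, $[y_{1},z_{2}]$, $z_{1}z_{2}+z_{2}z_{1}$ themselves, not by $[x_{1},x_{2},x_{3}]$ and Lemma \ref{id1.5}, though you state the correct mechanism immediately afterwards.
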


In \cite{Fonseca} Theorema 8.2, the author shows a basis for the
$\mathbb{Z}_{2}$-graded identities of $E_{\infty}$. In particular we
have the following.

\begin{teo}\label{teorema 8.2}
The $\mathbb{Z}_{2}$-graded identities of $E_{\infty}$ follow from:
\begin{center}
$[x_{1},x_{2},x_{3}], z_{1}^{p}$ e $y_{1}^{pq} - y_{1}^{p}$.
\end{center}
\end{teo}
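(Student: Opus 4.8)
The plan is to prove the equality of $T_2$-ideals $I := \langle [x_1,x_2,x_3], z_1^p, y_1^{pq}-y_1^p\rangle_{T_2} = T_2(E_\infty)$. One inclusion is immediate from the basic tools: $[x_1,x_2,x_3]\in T_2(E)$ is Lemma \ref{id1}, and both $z_1^p\in T_2(E)$ and $y_1^{pq}-y_1^p\in T_2(E)$ are recorded as consequences of Lemma \ref{id2}. Hence $I\subseteq T_2(E_\infty)$, and the entire content is the reverse inclusion. The idea is to show that the spanning set of the relatively free algebra modulo $I$ descends to a linearly independent family modulo $T_2(E_\infty)$.

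Concretely, I would first invoke Proposition \ref{id3}: the ideal appearing there is exactly $I$, so every $f\in F\langle X\rangle$ is congruent modulo $I$ to a normal form $\sum_{i=1}^m f_i u_i$ with the $f_i$ nonzero $p$-polynomials and the $u_i$ distinct elements of $SS$. Since $I\subseteq T_2(E_\infty)$, the natural surjection $F\langle X\rangle/I \to F\langle X\rangle/T_2(E_\infty)$ has kernel $T_2(E_\infty)/I$, so proving $T_2(E_\infty)\subseteq I$ reduces to the core claim: \emph{if a normal form $g=\sum_i f_i u_i$ lies in $T_2(E_\infty)$, then every $f_i=0$} (whence $f\equiv g=0$ modulo $I$). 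I would prove this claim by producing, for the leading term $LT(g)$ with respect to the total order on $SS$, an evaluation in $E_\infty$ that detects the coefficient attached to it.

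The separation mechanism exploits that $E_\infty$ has infinitely many homogeneous generators of each $\Z_2$-degree: the even generators $e_{2k}$ lie in $E_\infty^0$ and the odd generators $e_{2k+1}$ lie in $E_\infty^1$. I would evaluate each $y_j$ at $\alpha_j 1_E + e_{2k_j}$ for a fresh even generator and a scalar $\alpha_j\in F$, and each $z_j$ at a fresh odd generator, so that the image of $g$ in $E$ splits along the basis $B$ by support. The crucial decoupling is that in a normal form the exponent of a $y$-variable occurring only in $beg$ is a multiple of $p$ (of size $<pq$, coming from $f_i$) plus a residue $\le p-1$ (coming from $beg(u_i)\in SS$); these two pieces are independently recoverable, the $SS$-part from the length-support of the image and the $p$-polynomial part from the scalar coefficient, while the commutator parts $\psi(u_i)$ are separated by the supports they produce. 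Using Corollary \ref{cor1} I would choose the $\alpha_j$ so that the coefficient $f_{i_0}$ of $LT(g)$ does not vanish on scalars, and using distinct generators I would arrange that distinct $u_i$ contribute elements of $B$ of distinct support. Since $g$ vanishes on $E_\infty$, comparing the coefficient of a chosen maximal-support basis element forces the coefficient attached to $LT(g)$ to vanish, and a downward induction on the total order then kills every $f_i$; Proposition \ref{id5} guarantees that a $p$-polynomial driven into $T_2(E)$ is genuinely $0$.

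The main obstacle is precisely that, over a finite field, multihomogeneity is unavailable, so distinct monomials need not be separated by multidegree and several $u_i$ may collapse onto the same support as $LT(g)$ after substitution; these are the \emph{bad terms}. The delicate step is therefore to control them: by Observation \ref{observacao} every bad term is strictly smaller than $LT(g)$ in the order, so I would argue inductively, at each stage isolating $LT(g)$ together with its bad terms and refining the substitution — shifting how the first $z$-variable $pr(z)(LT(g))$ of $beg(LT(g))$ is split between $beg$ and the commutator part $\psi$ — to peel off the largest bad term $LBT(g)$, repeating until no bad term survives. Carrying out this bad-term elimination cleanly, while tracking the signs produced by the commutators and the interplay between $beg$ and $\psi$, is where the real work lies.
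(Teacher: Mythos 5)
The first thing to note is that the paper contains no proof of this statement: Theorem \ref{teorema 8.2} is imported verbatim from \cite{Fonseca} (Theorem 8.2 there), so there is no internal argument to measure yours against. That said, your outline is correct and is exactly the strategy this paper itself deploys for its closely related linear-independence results (the proposition that $pPol-SS(a_1,\ldots,b_m)$ is linearly independent, and Lemma \ref{primeiro lema}): reduce via Proposition \ref{id3} to showing that a normal form $\sum_i f_iu_i\in T_2(E_\infty)$ forces every $f_i=0$, then build a graded evaluation sending each $y_j$ to $\alpha_j1_E$ plus a sum of length-two products of fresh generators and each $z_j$ to elements built from fresh odd-indexed generators, choose the $\alpha_j$ by Corollary \ref{cor1}, and compare dominating parts and supports to isolate the coefficient of $LT(f)$, with Proposition \ref{id5} finishing the $p$-polynomial part. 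The one place you overshoot is the bad-term elimination: for $E_\infty$ the substitution can be arranged so that every variable of $Zyn(LT(f))$ takes a \emph{central} value of $\Z_2$-degree $1$ (a sum of products of one odd-indexed and one even-indexed generator), so any term using such a variable inside a commutator is annihilated outright; the three-case comparison by the total order (as in Lemma \ref{primeiro lema}) then suffices, and no $LBT$-peeling induction is needed. That extra machinery, together with Observation \ref{observacao}, is genuinely required only for $E_k$, where the finite supply of degree-$0$ generators creates the relations that produce surviving bad terms.
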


In \cite{Fonseca} Theorem 9.3, the author shows a basis for the $\mathbb{Z}_{2}$-graded identities of $E_{k^{*}}$, where $k \geq 1$.

In particular, we have the following result.

\begin{teo}\label{teorema 9.3}
The $\mathbb{Z}_{2}$-graded identities of $E_{k^{*}}$ follow from:
\begin{center}
$[x_{1},x_{2},x_{3}], z_{1}^{p}, z_{1}\cdots z_{k+1}$ e $y_{1}^{pq}
- y_{1}^{p}$.
\end{center}
\end{teo}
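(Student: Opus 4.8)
The plan is to prove the equality of $T_2$-ideals $T_2(E_{k^*}) = I$, where $I := \langle [x_1,x_2,x_3],\, z_1^p,\, z_1\cdots z_{k+1},\, y_1^{pq}-y_1^p\rangle_{T_2}$, by establishing the two inclusions separately. For $I \subseteq T_2(E_{k^*})$ it suffices to check that each of the four generators vanishes under every graded substitution. The triple commutator is handled by Lemma \ref{id1}; the identity $y_1^{pq}-y_1^p$ by Lemma \ref{id2}; and $z_1^p$ follows from Lemma \ref{id2} as well, since every odd element of $E_{k^*}$ lies in $E^*$ (it carries no scalar component), so its $p$-th power equals $\lambda^p 1_E = 0$. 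The genuinely new generator is $z_1\cdots z_{k+1}$, which I would treat by a pigeonhole count: each odd homogeneous element involves, in every one of its basis monomials, an odd and hence positive number of the $k$ odd generators $e_1,\dots,e_k$; expanding a product of $k+1$ odd elements, every surviving term consumes at least $k+1$ occurrences of generators drawn from $\{e_1,\dots,e_k\}$, so some $e_j$ is repeated and the term vanishes. Hence $z_1\cdots z_{k+1} \in T_2(E_{k^*})$.

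For the reverse inclusion I would take $f \in T_2(E_{k^*})$ and reduce it to a normal form modulo $I$. Since $\langle [x_1,x_2,x_3], z_1^p, y_1^{pq}-y_1^p\rangle_{T_2} \subseteq I$, Proposition \ref{id3} lets me write $f \equiv \sum_i f_i u_i \pmod I$ with the $f_i$ being $p$-polynomials and the $u_i$ distinct elements of $SS$. Next I would invoke $z_1\cdots z_{k+1}$ together with its substitution consequences: merging the $k+1$ slots onto fewer variables shows that every $z$-monomial of total odd degree at least $k+1$ (in particular $z^{k+1}$) already lies in $I$. Discarding all $u_i$ with $deg_Z(u_i) > k$ then yields a reduced expression $g = \sum_i f_i u_i$, congruent to $f$ modulo $I$, in which each surviving $u_i$ has at most $k$ distinct odd variables and total odd degree at most $k$, with every $z$-exponent at most $p-1$ (from the definition of $SS$). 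It remains to prove that $g$, which lies in $T_2(E_{k^*})$, is the zero element of this normal form; equivalently, that the reduced pairs $(f_i,u_i)$ are linearly independent modulo $T_2(E_{k^*})$.

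This last step is the core, and I would carry it out by an evaluation argument organized around the leading-term order and the bad-term machinery developed above. The even variables are easy: by Corollary \ref{cor1} a nonzero $p$-polynomial coefficient is detected by scalar substitutions $y_i \mapsto \alpha_i 1_E$. For the odd variables I would exploit that $deg_Z(u_i) \le k$. To realize a monomial $z_{i_1}^{b_1}\cdots z_{i_s}^{b_s}$ with $\sum_j b_j \le k$, substitute each $z_{i_j}$ by the odd element $\sum_{t=1}^{b_j} e_{a_{j,t}} e_{c_{j,t}}$, where the blocks $\{a_{j,t}\}_t$ of odd generators are pairwise disjoint (so a total of $\sum_j b_j \le k$ of the generators $e_1,\dots,e_k$ are used) and the $e_{c_{j,t}}$ are fresh even generators, all distinct. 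Because each factor $e_{a}e_{c}$ has even length and is therefore central, $z_{i_j}^{b_j}$ evaluates to $b_j!$ times the top-degree monomial $e_{a_{j,1}}e_{c_{j,1}}\cdots e_{a_{j,b_j}}e_{c_{j,b_j}}$, which is nonzero precisely because $b_j \le p-1 < p$. The bookkeeping closes exactly because $\sum_j b_j \le k$ and precisely $k$ odd generators are available — this is where the relation $z_1\cdots z_{k+1}$ is seen to be both necessary (degree $k+1$ overflows the budget) and sufficient. Commutator slots of multilinear $u_i$ would be filled by further disjoint generators so that the linear-in-commutators part survives, each $[e_a,e_b]$ producing $2e_ae_b$.

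The hard part will be the interaction between the two families of substitutions: a $p$-polynomial coefficient $f_i$ can raise an even degree by a multiple of $p$, making a lower term collide, after evaluation, with the image of the leading term of $g$. This is exactly the phenomenon isolated by the notions of \emph{bad term} and $LBT$ recalled above. I would therefore induct on $LT(g)$: at each stage I choose an evaluation of the odd and commutator variables that isolates $\psi(LT(g))$ and the odd part of $LT(g)$ as in the previous paragraph, reducing the problem to an identity purely among the $p$-polynomial coefficients of $LT(g)$ and its finitely many bad terms; Observation \ref{observacao} bounds how these terms can differ, and Corollary \ref{cor1} then forces the surviving $p$-polynomial combination to be zero, hence the coefficient of $LT(g)$ to vanish. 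Peeling off leading terms successively yields $g=0$, so $f\in I$, completing the proof.
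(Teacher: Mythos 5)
First, a point of comparison: the paper does not prove this statement at all --- Theorem \ref{teorema 9.3} is quoted verbatim from Fonseca's paper \cite{Fonseca} (Theorem 9.3 there), so there is no in-paper proof to measure you against. Judged on its own terms, your architecture is the standard and correct one: show the four generators are identities, reduce an arbitrary identity to the normal form $\sum_i f_i u_i$ with $u_i\in SS1$ (this is exactly Proposition \ref{proposicao 9.2}), and then show that this normal form is linearly independent modulo $T_2(E_{k^*})$. Your verification of the inclusion $I\subseteq T_2(E_{k^*})$ is complete and correct; in particular the pigeonhole argument for $z_1\cdots z_{k+1}$ (every odd basis monomial of $E_{k^*}$ contains at least one of $e_1,\dots,e_k$, so $k+1$ odd factors force a repeated generator) is exactly right, as is the derivation of $z_1^p$ from Lemma \ref{id2}.

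The gap is in the linear independence step, which is where the entire content of the theorem lives. The substitution you propose for the odd variables, $z_{i_j}\mapsto \sum_t e_{a_{j,t}}e_{c_{j,t}}$, sends every odd variable to a \emph{central} element of $E$, so every commutator $[z_{i_j},\,\cdot\,]$ evaluates to zero; the evaluation therefore annihilates every multilinear $u_i$ with odd variables in $\psi(u_i)$, including possibly the leading term you are trying to isolate. You acknowledge that commutator slots need ``further disjoint generators,'' but the genuinely delicate case is a variable lying in $Zyy(LT(f))$, i.e.\ occurring both in $beg(u_i)$ and inside a commutator: such a variable needs a mixed substitution (one lone odd generator plus length-two central terms, as in the image of $z_{m_1+1}$ in the paper's Lemma \ref{segundo lema}), together with a bookkeeping argument --- the analogue of Lemma 7.1 of \cite{Fonseca} --- showing that for every $u_j<LT(f)$ no summand of $dom(\phi(f_ju_j))$ contains the full support of $dom(\phi(f_iLT(f)))$. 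None of this is constructed in your proposal, and it is precisely where the identity $z_1\cdots z_{k+1}$ interacts with the budget of $k$ odd generators. Finally, the appeal to bad terms and $LBT$ at the end is misplaced: that machinery is needed only for $E_k$ (where the identities (4)--(6) of Theorem \ref{teorema 10.17} create collisions between distinct elements of $SS3$ of the same multidegree); for $E_{k^*}$ the leading-term argument closes without it.
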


When $k < p$, we have $z_{1}^{p}$ follows from
$z_{1}.\cdots.z_{k+1}$.

In \cite{Bekh} Theorem 3.1, Ochir and Rankin showed a basis for the
ordinary identities of $E$. By the latter result, it is easy to show
that the $\mathbb{Z}_{2}$-graded identities of $E_{0^{*}}$ follow
from the polynomials $[y_{1},y_{2},y_{3}],y_{1}^{pq} - y^{p},
z_{1}$. Notice that $z_{1}^{p}$ follows from $z_{1}$.

\begin{def.}
An element $a \in SS$ is said to be of \textit{Type-1} (or $u \in
SS1$) if $deg_{Z}(a) \leq k$.
\end{def.}

Due to the identity $z_{1}\cdots z_{k+1} \in T_{2}(G_{k^{*}})$, we
have another version of Proposition \ref{id3} for $E_{k^{*}}$. The
next result is also true in the case $k = 0$ (see \cite{Fonseca}
Proposition 9.2).

\begin{prop}\label{proposicao 9.2}
Let $f = \sum_{i=1}^{n}\lambda_{i}v_{i}$ be a linear combination of elements of $Pr(X)$. Then $f$ may be written modulo $\langle [x_{1},x_{2},x_{3}],
z_{1}^{p}, y_{1}^{pq} - y_{1}^{p}, z_{1}\cdots
z_{k+1}\rangle_{T_{2}}$, as:
\begin{center}
$\sum_{i=1}^{m}f_{i}u_{i}$,
\end{center}
where $f_{1},\ldots,f_{m}$ are $p$-polynomials and $u_{1},\ldots,u_{m}
\in SS1$ are distinct.
\end{prop}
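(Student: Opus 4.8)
The plan is to obtain this as a refinement of Proposition~\ref{id3}, using the extra generator $z_{1}\cdots z_{k+1}$ to annihilate the summands of high $Z$-degree. Set $I_{0}=\langle [x_{1},x_{2},x_{3}], z_{1}^{p}, y_{1}^{pq}-y_{1}^{p}\rangle_{T_{2}}$ and $I=\langle [x_{1},x_{2},x_{3}], z_{1}^{p}, y_{1}^{pq}-y_{1}^{p}, z_{1}\cdots z_{k+1}\rangle_{T_{2}}$, so $I_{0}\subseteq I$. First I would apply Proposition~\ref{id3} to $f$, writing $f\equiv\sum_{i=1}^{m}f_{i}u_{i}\pmod{I_{0}}$ with $f_{1},\ldots,f_{m}$ $p$-polynomials and $u_{1},\ldots,u_{m}\in SS$ distinct; as $I_{0}\subseteq I$, the same congruence holds modulo $I$. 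Everything then reduces to the claim that \emph{if $u\in SS$ satisfies $deg_{Z}(u)>k$, then $u\in I$}. Granting this, since $I$ is a two-sided ideal we get $f_{i}u_{i}\in I$ for each $i$ with $deg_{Z}(u_{i})>k$; deleting those summands leaves $f\equiv\sum f_{i}u_{i}\pmod{I}$ taken over the indices with $deg_{Z}(u_{i})\le k$, that is over $u_{i}\in SS1$, and these remain pairwise distinct.

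To prove the claim I would first show $u\in T_{2}(E_{k^{*}})$ and then quote Theorem~\ref{teorema 9.3}, which asserts exactly $T_{2}(E_{k^{*}})=I$. The heart is a counting observation in $E_{k^{*}}$: a basis element $e_{i_{1}}\cdots e_{i_{n}}$ is $\Z_{2}$-homogeneous of degree $1$ precisely when an odd (thus positive) number of its indices lie in $\{1,\ldots,k\}$, so every homogeneous odd element of $E_{k^{*}}$ is a combination of basis monomials each of which uses at least one of the $k$ generators $e_{1},\ldots,e_{k}$. Hence, under any graded substitution (the $y$'s by even, the $z$'s by odd elements), an ordinary monomial of $F\langle X\rangle$ carrying at least $k+1$ occurrences of $Z$-variables is sent to a sum of products having at least $k+1$ factors that each occupy one index from $\{1,\ldots,k\}$; by pigeonhole two of these factors share an index, so every resulting Grassmann monomial repeats a generator and vanishes. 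Thus any monomial with at least $k+1$ occurrences of $Z$-variables lies in $T_{2}(E_{k^{*}})$.

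It remains to pass from monomials to a general $u\in SS$ with $deg_{Z}(u)>k$. If $\psi(u)=1$, then $u=beg(u)=\Pi(Y)(u)\Pi(Z)(u)$ is already a monomial whose number of $Z$-occurrences is $deg_{Z}(u)\ge k+1$, so it is an identity by the previous step. If instead $u$ is multilinear, I would expand every commutator via $[x_{i},x_{j}]=x_{i}x_{j}-x_{j}x_{i}$; this rewrites $u$ as a signed sum of monomials, each using exactly the variables of $u$ with the same multiplicities and therefore still carrying $deg_{Z}(u)\ge k+1$ occurrences of $Z$-variables, whence each is an identity and $u\in T_{2}(E_{k^{*}})=I$. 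The case $k=0$, where $z_{1}\cdots z_{k+1}=z_{1}$ and $E_{0^{*}}$ has trivial odd component, follows in the same way (or from the basis of $T_{2}(E_{0^{*}})$ recalled above). The step I expect to demand the most care is this claim: making the pigeonhole vanishing rigorous when even variables are interspersed among the $Z$-variables, and, in the multilinear case, verifying that expanding the commutators really preserves the count of $Z$-occurrences in every resulting monomial. Invoking Theorem~\ref{teorema 9.3} to convert membership in $T_{2}(E_{k^{*}})$ into membership in $I$ is what lets me avoid manipulating $z_{1}\cdots z_{k+1}$ directly, which for the commutator terms looks unpleasant.
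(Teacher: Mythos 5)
Your overall reduction is the one the paper intends: the paper's only justification for this proposition is the remark that, because $z_{1}\cdots z_{k+1}\in T_{2}(E_{k^{*}})$, Proposition \ref{id3} admits a version in which the terms of $Z$-degree greater than $k$ can be discarded, and it otherwise defers to \cite{Fonseca}. Your decomposition (apply Proposition \ref{id3} modulo $I_{0}\subseteq I$, then delete every summand $f_{i}u_{i}$ with $deg_{Z}(u_{i})>k$) is exactly that, and the bookkeeping at the end is correct.

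The one step I would not accept as written is the appeal to Theorem \ref{teorema 9.3} to convert $u\in T_{2}(E_{k^{*}})$ into $u\in I$. In \cite{Fonseca} the present proposition (Proposition 9.2 there) is an ingredient in the proof of the completeness half of Theorem 9.3, i.e.\ of the very inclusion $T_{2}(E_{k^{*}})\subseteq I$ you are invoking; so the argument is circular relative to the actual development, and it is precisely the hard direction of the basis theorem that you need. Fortunately the detour through $E_{k^{*}}$ and the pigeonhole is unnecessary: one shows directly that every monomial $w$ containing at least $k+1$ occurrences of $Z$-variables lies in $\langle z_{1}\cdots z_{k+1}\rangle^{T_{2}}$. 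Write $w=u_{0}z_{j_{1}}u_{1}z_{j_{2}}u_{2}\cdots z_{j_{N}}u_{N}$ with $N\geq k+1$ and each $u_{i}$ a (possibly empty) word in $Y$; the graded substitution $z_{i}\mapsto z_{j_{i}}u_{i}$ for $i=1,\ldots,k+1$ (each image is homogeneous of $\Z_2$-degree $1$) sends $z_{1}\cdots z_{k+1}$ to $z_{j_{1}}u_{1}\cdots z_{j_{k+1}}u_{k+1}$, and $w$ equals $u_{0}$ times this instance times the remaining tail $z_{j_{k+2}}u_{k+2}\cdots z_{j_{N}}u_{N}$, hence lies in the $T_{2}$-ideal generated by $z_{1}\cdots z_{k+1}$. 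This handles at once the case $\psi(u)=1$ and every monomial arising from expanding the commutators of a multilinear $u$, and it also disposes of the two points you flagged as delicate (the interspersed even variables and the preservation of the $Z$-count), since the blocks $z_{j_{i}}u_{i}$ absorb the even letters. With this substitution argument in place of the pigeonhole-plus-Theorem-\ref{teorema 9.3} step, your proof is complete and noncircular.
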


In \cite{Fonseca} Theorem 10.17, the author describes a basis for
the $\mathbb{Z}_{2}$-graded identities of $E_{k}$ when $k \geq 1$.
In particular we have the next result.

\begin{teo}\label{teorema 10.17}
The $\mathbb{Z}_{2}$-graded identities of $E_{k}$ follow from the following polynomial identities:
\begin{itemize}
\item $[y_{1},y_{2}]\cdots [y_{k},y_{k+1}]$ (if $k$ is odd) \ \ (1);
\item $[y_{1},y_{2}]\cdots [y_{k-1},y_{k}][y_{k+1},x]$ (if $k$
is even and $x \in X - \{y_{1},\ldots,y_{k+1}\})$ \ \ (2);
\item $[x_{1},x_{2},x_{3}]$ \ \ (3);
\item $g_{k-l+2}(z_{1},\cdots,z_{k-l+2})[y_{1},y_{2}]\cdots
[y_{l-1},y_{l}]$ (if $l \leq k$ and $l$ is even) \ \ (4);
\item $g_{k-l+2}(z_{1},\ldots,z_{k-l+2})[z_{k-l+3},y_{1}][y_{2},y_{3}]\cdots[y_{l-1},y_{l}]$
(if $l \leq k$ and $l$ is odd) \ \ (5);
\item
$[g_{k-l+2}(z_{1},\ldots,z_{k-l+2}),y_{1}]\cdots[y_{l-1},y_{l}]$ (if
$l\leq k$ and $l$ is odd) \ \ (6);
\item $z_{1}^{p}$ \ \ (7);
\item $y_{1}^{pq} - y_{1}^{p}$ \ \ (8).
\end{itemize}
\end{teo}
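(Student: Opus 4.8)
The plan is to establish the two inclusions $\langle(1)\text{--}(8)\rangle^{T_2}\subseteq T_2(E_k)$ and $T_2(E_k)\subseteq\langle(1)\text{--}(8)\rangle^{T_2}$ separately. Since (3), (7) and (8) are already known to be identities of $E$ (Lemmas \ref{id1}, \ref{id2} and the remark following Lemma \ref{id2}), the substantive part of the first inclusion concerns (1), (2) and the three families (4)--(6) built from the polynomials $g_m$. For (1) and (2) I would argue by a counting principle on the even generators: in $E_k$ the homogeneous component of degree $0$ has only the $k$ anticommuting generators $e_1,\dots,e_k$ available, and a commutator $[a,b]$ of two even elements is nonzero only when both $a$ and $b$ have odd Grassmann length, which forces each such factor to consume at least one of $e_1,\dots,e_k$. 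Expanding the product of $\lceil (k+1)/2\rceil$ commutators appearing in (1), respectively (2), into the monomial basis of $E_k$ and using that a repeated generator annihilates a basis monomial, one sees that every surviving term would require more than $k$ distinct even generators, so the product vanishes identically; for $k$ even the trailing factor $[y_{k+1},x]$ of (2) is exactly what supplies the extra even generator that overflows the budget. These are the even-side analogues of the odd-side constraint $z_1\cdots z_{k+1}$ used for $E_{k^*}$. For (4)--(6) I would feed the explicit signed sum defining $g_m$ into the same length/parity analysis, checking that the coefficients $(-2)^{-|T|/2}$ are precisely what is needed to force cancellation once the even-generator budget is exhausted.

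For the converse inclusion I would pass to the relatively free algebra and prove injectivity of the canonical map $F\langle X\rangle/\langle(1)\text{--}(8)\rangle^{T_2}\to F\langle X\rangle/T_2(E_k)$. First I would reduce to a normal form: by Proposition \ref{id3} any element of $Pr(X)$ is, modulo (3), (7), (8), a combination $\sum_i f_i u_i$ with the $f_i$ $p$-polynomials and the $u_i\in SS$; then I would use the new relations (1), (2), (4)--(6) to eliminate all $u_i$ whose commutator skeleton or $z$-content exceeds what $E_k$ can support, exactly as Proposition \ref{proposicao 9.2} does for $E_{k^*}$ via $z_1\cdots z_{k+1}$. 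The leading-term machinery of Section 3 --- the order on $SS$, $LT(f)$, and the notion of a bad term together with Observation \ref{observacao} --- is what drives this rewriting: one repeatedly cancels the leading $SS$-term of a putative identity against a suitable consequence of (1)--(6), strictly decreasing the leading term, until a canonical spanning set of the relatively free algebra is reached.

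The crux, and the step I expect to be the main obstacle, is the linear independence of this spanning set modulo $T_2(E_k)$. Suppose a combination $\sum_i f_i u_i$ of distinct normal-form elements lies in $T_2(E_k)$; I would construct, for each $u_i$, an explicit multihomogeneous substitution into $E_k$ that isolates it. The $y$-powers are handled by Corollary \ref{cor1}, which provides a scalar substitution on which a prescribed nonzero $p$-polynomial does not vanish, so the $f_i$ separate once the $u_i$ do; the commutator skeleton $\psi(u_i)$ and the placement of the $z$-variables are detected by sending the even variables into distinct products of $e_1,\dots,e_k$ and the odd variables into distinct high-index generators, so that each monomial evaluates to a distinguishable basis element of $E_k$. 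The delicate point is that these evaluations must simultaneously respect the bound of $k$ even generators --- so that the very relations forcing the reduction do not also collapse the intended witnesses --- while keeping the $g_m$-contributions from interfering. Matching the normal form exactly to a basis of each multihomogeneous component of the relatively free algebra of $E_k$, thereby certifying that no further identity is hiding, is where the real work lies.
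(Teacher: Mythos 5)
This theorem is not actually proved in the paper you were given: it is imported verbatim from \cite{Fonseca} (Theorem 10.17 there), and the present text only consumes it through Proposition \ref{proposicao 10.16} and the $SS3$ machinery. So there is no in-paper proof to measure you against; what one can say is that your outline coincides with the strategy the paper attributes to Fonseca and partially replays in its Section 5.4 --- reduce modulo the candidate identities to a normal form $\sum_i f_iu_i$ with $f_i$ a $p$-polynomial and $u_i\in SS3$, then separate the $u_i$ by explicit graded substitutions into $E_k$, splitting into the three cases governed by whether $deg_{Z}(beg(LT(f)))+deg_{Y}(\psi(LT(f)))\leq k$ or $=k+1$ with or without a bad term (Observation \ref{observacao}), and invoking Corollary \ref{cor1} to keep the $p$-polynomial coefficients alive. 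In that sense your route is the intended one, not a new one.

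Two concrete caveats. First, in the easy inclusion your counting for (1) and (2) is stated too weakly: a nonzero commutator of two degree-$0$ elements of $E_k$ requires \emph{each argument} to have odd length and hence to consume at least one of $e_1,\dots,e_k$, so each such factor costs at least \emph{two} of the $k$ available generators; with only ``at least one per factor'', as you wrote, the $(k+1)/2$ factors of (1) account for merely $(k+1)/2\leq k$ generators and no contradiction arises. The corrected count ($k+1>k$ in both (1) and (2), the last factor of (2) contributing at least one more through $y_{k+1}$) does close the argument. Second, the two genuinely hard steps --- verifying that the polynomials (4)--(6) built from $g_{k-l+2}$ vanish on $E_k$, where the coefficients $(-2)^{-|T|/2}$ must actually be shown to produce the needed cancellations, and constructing substitutions that isolate each $u_i\in SS3$ while respecting the budget of $k$ even generators, especially in the bad-term case --- are announced but not carried out. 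As you acknowledge, that is where the real work lies, and it is precisely the content delegated to \cite{Fonseca}; as submitted, your text is a correct plan rather than a proof.
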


\begin{def.}
An element $u_{i} \in SS$ is said to be of \textit{Type-$2$} (or $u_{i} \in SS2$)
if the following condition holds:
\begin{description}
\item $deg_{Y}(\psi(u_{i})) \leq k$ and $deg_{Z}(beg(u_{i})) + deg_{Y}(\psi(u_{i})) \leq
k+1$.
\end{description}
\end{def.}

\begin{def.}
An element $u_{i} \in SS$ is said to be of \textit{Type-$3$} (or $u_{i} \in SS3$)
if the following conditions hold:
\begin{description}
\item $u_{i} \in SS2$;
\item If $deg_{Z} (beg(u_{i})) + deg_{Y} (\psi(u_{i})) = k + 1$, then
$Deg_{pr(z)(u_{i})} \psi(u_{i}) = 0$.
\end{description}
\end{def.}

Now we have the next result (see \cite{Fonseca} Proposition 10.16).

\begin{prop}\label{proposicao 10.16}
Let $f = \sum_{i=1}^{n}\lambda_{i}v_{i}$ be a linear combination of elements of $Pr(X)$. Then $f$ may be written modulo $T_{2}(E_{k})$, as:
\begin{center}
$\sum_{i=1}^{m}f_{i}u_{i}$,
\end{center}
where $f_{1},\ldots,f_{m}$ are $p$-polynomials and
$u_{1},\ldots,u_{m}$ are distinct elements of $SS3$.
\end{prop}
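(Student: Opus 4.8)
The plan is to begin from the normal form supplied by Proposition \ref{id3} and then eliminate, one at a time, the elements of $SS$ that fail to be of Type-$3$, each time trading an offending $SS$-element for $p$-polynomial multiples of strictly smaller ones. Since the identities $[x_1,x_2,x_3]$, $z_1^p$ and $y_1^{pq}-y_1^p$ (items (3), (7), (8) of Theorem \ref{teorema 10.17}) all lie in $T_2(E_k)$, Proposition \ref{id3} lets me write $f \equiv \sum_i f_i u_i \pmod{T_2(E_k)}$ with each $f_i$ a $p$-polynomial and the $u_i$ distinct elements of $SS$. By Lemma \ref{id2} every $p$-th power $y^p$ evaluates to a scalar, so each $f_i$ is central modulo $T_2(E)$; hence it suffices to rewrite a single factor $u \in SS \setminus SS3$ as a scalar combination of elements of $SS$ lying strictly below $u$ in the order $<$, the central factor $f_i$ being carried along unchanged. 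As $<$ restricts to a well-order on the finite set of elements of $SS$ of any fixed multidegree, and all manipulations preserve multidegree, an induction on the $<$-largest $SS$-element will then terminate with every surviving term in $SS3$.

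The reduction is dictated by which of the defining conditions of $SS3$ is violated. If $deg_Y(\psi(u)) > k$, then the commutator part of $u$ involves at least $k+1$ variables of $Y$; substituting them into the monomial identity (1) (for $k$ odd) or (2) (for $k$ even) rewrites the corresponding product of commutators as a combination of terms with strictly smaller $\psi$ in the $<_{lex-rig}$ order, hence strictly smaller in $<$. If instead $deg_Y(\psi(u)) = l \le k$ but $deg_Z(beg(u)) + deg_Y(\psi(u)) \ge k+2$, I choose $k-l+2$ distinct $Z$-variables occurring in $beg(u)$ and substitute them into $g_{k-l+2}$ inside identity (4) (for $l$ even) or (5) (for $l$ odd). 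The leading term of $g_{k-l+2}$ is the pure product $z_1\cdots z_{k-l+2}$, so the leading monomial of the substituted identity matches that of $u$, while every other summand of $g_{k-l+2}$ carries at least one $Z$-$Z$ commutator and therefore has strictly smaller $deg_Z(beg)$; this lowers $deg_Z(beg) + deg_Y(\psi)$ and strictly decreases $<$. Finally, if $deg_Z(beg(u)) + deg_Y(\psi(u)) = k+1$ but $Deg_{pr(z)(u)}\psi(u) > 0$, identity (6) applies: its leading term $[z_1\cdots z_{k-l+2},y_1][y_2,y_3]\cdots[y_{l-1},y_l]$ has combined degree $k+1$ with $pr(z)$ sitting inside the mixed commutator, matching the offending part of $u$, and its remaining terms either drop the combined degree or expel $pr(z)$ from $\psi$, again strictly decreasing $<$.

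Throughout, the rewriting is carried out modulo $[x_1,x_2,x_3]$, which by Lemma \ref{id1.5} legitimizes re-expressing products of commutators, and after each step the resulting expression is put back into the $SS$-span by a further application of Proposition \ref{id3}; the $Y$-exponents divisible by $p$ are absorbed into the central $p$-polynomial factor, while the $z$-powers $\ge p$ vanish by (7). The bookkeeping of $LT$, of bad terms and of $LBT$ introduced above, together with Observation \ref{observacao}, is exactly what is needed to check that this renormalization never raises the $<$-largest $SS$-element, so that the induction is well founded.

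The step I expect to be the genuine obstacle is the leading-term analysis in the three cases governed by $g_m$ (items (4), (5), (6)): one must verify \emph{precisely} that after substituting distinct $Z$-variables into $g_{k-l+2}$ and expanding modulo $[x_1,x_2,x_3]$, the unique leading term coincides with the offending monomial of $u$, that all the remaining summands reduce to $p$-polynomial multiples of genuine $SS$-elements strictly below $u$, and that the subsequent renormalization via Proposition \ref{id3} does not reintroduce a term at or above $u$. Granting this strict-decrease property in each of the three cases, well-foundedness of $<$ forces the procedure to halt, and at that point no surviving $SS$-element violates any Type-$3$ condition, which is the assertion of the proposition.
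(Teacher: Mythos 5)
You should first be aware that the paper does not prove this proposition at all: it is quoted verbatim from Fonseca's paper (Proposition 10.16 of \cite{Fonseca}), so there is no in-paper argument to compare against. Your overall strategy --- start from the $SS$-normal form of Proposition \ref{id3}, which is legitimate since $[x_1,x_2,x_3]$, $z_1^p$ and $y_1^{pq}-y_1^p$ are among the identities of $E_k$, then run a well-founded induction on the $<$-largest $SS$-term, trading each term outside $SS3$ for strictly smaller ones via the identities (1), (2), (4), (5), (6) --- is the natural one and almost certainly the one underlying the cited result. The first case ($deg_Y(\psi(u))>k$) is essentially fine: after permuting commutator entries via Lemma \ref{id1.5} the term contains a substitution instance of (1) or (2) and simply vanishes modulo $T_2(E_k)$.

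However, as written the proposal has a genuine gap, and you flag it yourself: the entire content of the proposition lives in the leading-term analysis for the identities built from $g_{k-l+2}$, and you only assert the required strict-decrease property rather than prove it. Concretely: (i) you ``choose $k-l+2$ distinct $Z$-variables occurring in $beg(u)$,'' but membership in $SS$ only bounds $deg_Z(beg(u))$ from below, not the number of distinct $Z$-variables (each may occur with exponent up to $p-1$), so the substitution into $g_{k-l+2}$ must be made with multiplicities and one has to track which summands of $g$ survive (those placing a repeated variable twice in one commutator die, others do not) and verify the $T=\emptyset$ term still isolates $u$ with unit coefficient; (ii) when $\psi(u)=1$ and $deg_Z(beg(u))\ge k+2$, the reduction via (4) with $l=0$ produces non-multilinear terms carrying commutators, which are \emph{not} elements of $SS$; they must be renormalized through Proposition \ref{id3}, and whether that renormalization stays strictly below $u$ is precisely the point you defer; (iii) the boundary case $deg_Z(beg(u))+deg_Y(\psi(u))=k+1$ with $Deg_{pr(z)(u)}\psi(u)>0$, handled by (6), is where the ``bad term'' phenomenon of Definition 3.8 and Observation \ref{observacao} arises: expanding $[z_{1}\cdots z_{k-l+2},y_1]$ by Leibniz produces several terms of the same total degree that are not simply $<_{lex-rig}$-comparable in the naive way, and your one-sentence claim that the remaining terms ``either drop the combined degree or expel $pr(z)$ from $\psi$'' is exactly the assertion that needs proof. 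Until these three verifications are supplied the argument is a plausible programme, not a proof.
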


\section{Bounds for $\Z_2$-graded codimensions of $E$}

In this section we are going to give an upper and a lower bound for the $\Z_2$-graded codimension of $E$ endowed with a homogeneous $\Z_2$-grading. We shall start with some general facts whereas in the next sections we study each case separately. 

In the sequel $F$ will denote a finite field of characteristic $p>2$ and order $q$ unless explicitely written. We recall that if $A$ is a $\Z_2$-graded PI-algebra we denote by $U_{l,m}(A)$ its $\Z_2$-graded relatively free algebra in $l$ variables of degree 0 and $m$ variables of degree 1.

\begin{prop}\label{cod2}
Let $F$ be a finite field such that $|F|=q$, then \[\dim_FU_{l,m}(E)\leq\frac{(l+m)^{pql+pm+1}-1}{l+m-1},\] where $E$ is assumed $\Z_2$-graded by an homogeneous $\Z_2$-grading.
\end{prop}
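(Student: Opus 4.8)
The plan is to bound the dimension of $U_{l,m}(E)$ by exhibiting a spanning set whose cardinality admits the stated geometric-series bound. The starting point is Proposition \ref{id3} (together with its variants \ref{oo}, \ref{proposicao 9.2}, \ref{proposicao 10.16}, one for each homogeneous grading), which tells us that every element of the relatively free algebra can be written modulo $T_2(E)$ as a linear combination $\sum f_i u_i$, where the $f_i$ are $p$-polynomials and the $u_i$ are distinct elements of $SS$. Since a $p$-polynomial in the $y$-variables has, by the Observation after the definition of $ppol$, dimension $q^l$ over $F$ (in $l$ variables), the whole space is spanned by products of the form $(\text{$p$-polynomial})\cdot u$ with $u\in SS$. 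Thus $\dim_F U_{l,m}(E)$ is at most $q^l$ times the number of relevant elements $u\in SS$, and the crux is to count those $u$.

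First I would recall that an element of $SS$ has the form $beg(a)\psi(a)$ where $beg(a)=(\prod y_{j_r}^{a_{j_r}})(\prod z_{i_r}^{b_{i_r}})$ satisfies $Deg_x\, beg(a)\le p-1$ for every variable $x$, and $\psi(a)$ corresponds to the commutator part. The key structural bound comes from the degree restrictions coming from Lemmas \ref{id2} and the identity $z_1^p\in T_2(E)$: the exponent of each $y$-variable is bounded (by $pq$, via $y_1^{pq}-y_1^p$) and each $z$-variable appears with exponent less than $p$ in the $beg$ part, while the commutator part is multilinear. So every surviving monomial has bounded total degree. I would make this explicit: the total degree $\deg a = \deg_Y a + \deg_Z a$ of any element $u\in SS$ that survives modulo the identities is at most $pql+pm$ in the worst case (at most $pq-1 < pq$ per $y$-variable over $l$ variables, and at most $p$ per $z$-variable over $m$ variables, counting both the $beg$ and the commutator contributions). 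The precise exponent $pql+pm+1$ in the statement should emerge from this degree ceiling.

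Next I would count. Every relevant monomial is a word in the $l+m$ variables $y_1,\dots,y_l,z_1,\dots,z_m$ whose total degree is at most $N:=pql+pm$. The number of words of length exactly $d$ in $l+m$ letters is $(l+m)^d$, so the number of words of length at most $N$ is $\sum_{d=0}^{N}(l+m)^d = \frac{(l+m)^{N+1}-1}{(l+m)-1}$. This is exactly the claimed bound $\frac{(l+m)^{pql+pm+1}-1}{l+m-1}$. The point is that the map sending each spanning element $u$ (or each spanning product $f_i u_i$, absorbing the $p$-polynomial exponents into the word-length count) to an underlying word of bounded length is injective enough that the dimension is dominated by this word count.

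The main obstacle, and the step requiring the most care, is the bookkeeping that converts the pair $(\text{$p$-polynomial},\, u)$ into a single word of length at most $pql+pm$ so that the naive geometric-series count applies without double counting or undercounting. Concretely, I must verify that absorbing the $p$-polynomial (whose $y$-exponents are multiples of $p$ bounded by $pq$) together with the $beg$ part (whose exponents are at most $p-1$) and the multilinear commutator part yields a well-defined assignment of each basis element to a distinct word of bounded total degree, and that the degree ceiling is genuinely $pql+pm$ rather than something larger. I would handle this by treating each commutator $[x_{t_{2i-1}},x_{t_{2i}}]$ as contributing its two variables to the word length, checking that the multilinearity of the commutator part and the exponent bounds on $beg$ keep the total degree below the threshold, and confirming that the geometric sum's exponent lands exactly at $pql+pm+1$. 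Since the argument only needs an upper bound, I need not worry about whether distinct basis elements could map to the same word — any such collision only improves the inequality.
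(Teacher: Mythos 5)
Your argument is correct and is essentially the paper's own: reduce exponents via $y^{pq}-y^{p}$ and $z^{p}$ so that every spanning monomial has total degree at most $pql+pm$, then bound the dimension by the number of words of length at most $pql+pm$ in $l+m$ letters, namely $\sum_{d=0}^{pql+pm}(l+m)^{d}=\frac{(l+m)^{pql+pm+1}-1}{l+m-1}$. The paper reaches the same degree ceiling more directly from the normal form $y_1^{a_1}\cdots y_l^{a_l}z_1^{b_1}\cdots z_m^{b_m}g$ with $g$ multilinear, bypassing the $SS$/$p$-polynomial bookkeeping you describe, but that is a cosmetic difference.
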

\proof
Let $W$ be a linear subspace of $F\langle X\rangle$ such that $\dim_FW=l+m$ and $F\langle X\rangle=\bigoplus_{d\geq0}(W^{\otimes d})$. We consider $R_n=\bigoplus_{d=0}^n(W^{\otimes d})$, then $\dim_FR_n=\sum_{d=0}^n(l+m)^d=\frac{(l+m)^{n+1}-1}{l+m-1}$. Let $f\in F\langle X\rangle$ and suppose $f$ multihomogeneous, then $f$ may be written as \[y_1^{a_1}\cdots y_l^{a_l}z_1^{b_1}\cdots z_m^{b_m}g(y_1,\ldots,y_l,z_1,\ldots,z_m),\] where $g(y_1,\ldots,y_l,z_1,\ldots,z_m)$ is multilinear. By Lemma \ref{id2} there exist $a'_i$'s and $b'_j$'s, where $a'_i<pq$ and $b'_j<p$, such that $f$ is $y_1^{a'_1}\cdots y_l^{a'_l}z_1^{b'_1}\cdots z_m^{b'_m}g(y_1,\ldots,y_l,z_1,\ldots,z_m)$ modulo its graded identities. This means $f\in R_{pql+pm}$ modulo the identities and the assertion follows.
\endproof

The previous result gives us that the $\Z_2$-graded Gelfand-Kirillov dimension of $E$ in a fixed number of graded variables is 0. See \cite{cen4} for more details about the graded Gelfand-Kirillov dimension of graded algebras and \cite{Centrone} for a comparison with the case of $E$ over an infinite field. Now we start to focus on codimensions. In what follows $V$ will denote the space of homogeneous polynomials of $F\langle X\rangle$.

\begin{prop}\label{cod1}
Let $F$ be a field (maybe infinite) of characteristic $p\neq0$, then if $n_2\geq m(p+1)$ we have $c_{(n_1,n_2)}(E,V)=0$.
\end{prop}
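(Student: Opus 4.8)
The plan is to prove the stronger statement that $V_{(n_1,n_2)}\subseteq T_2(E)$ whenever $n_2\geq m(p+1)$; since $c_{(n_1,n_2)}(E,V)$ is by definition $\dim_F V_{(n_1,n_2)}/(V_{(n_1,n_2)}\cap T_2(E))$, this containment forces the codimension to vanish. I would work only modulo the two identities $[x_1,x_2,x_3]$ and $z_1^p$, both of which lie in $T_2(E)$ for every homogeneous grading and every field of characteristic $p$ (Lemma \ref{id1} and the remark following Lemma \ref{id2}). Restricting to these two identities is exactly what makes the argument uniform in the finite and the infinite case, since the $p$-polynomial reduction of Proposition \ref{id3} is available only over finite fields.

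First I would fix $f\in V_{(n_1,n_2)}$, so that $f$ is $Z$-homogeneous of total degree $n_2$, and bring it to a normal form. Working modulo $[x_1,x_2,x_3]$ and its consequences (Lemmas \ref{id1} and \ref{id1.5}), $f$ rewrites as a linear combination of $Pr(X)$-elements of the shape $beg\cdot[x_{t_1},x_{t_2}]\cdots[x_{t_{2l-1}},x_{t_{2l}}]$. Every rewriting rule used here, namely $[x_1,x_2,x_3]$, $[x_1^p,x_2]$ and $[x_1,x_2][x_3,x_4]-[x_1,x_3][x_2,x_4]$, is multihomogeneous, so it preserves the total $Z$-degree; hence every term occurring in the normal form of $f$ still has $Z$-degree exactly $n_2$.

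The core of the proof is to bound the $Z$-degree that one such reduced term can carry. Inside $beg$, any odd variable $z_i$ appearing to a power $\geq p$ makes the whole monomial vanish because $z_i^p\in T_2(E)$; thus each of the at most $m$ distinct odd variables contributes at most $p-1$ to the $Z$-degree of $beg$, giving $\deg_Z(beg)\leq m(p-1)$. For the commutator part I would invoke the substitution instance $[z_i,u][z_i,v]=[z_i,z_i][u,v]=0$ of the identity $[x_1,x_2][x_3,x_4]-[x_1,x_3][x_2,x_4]\in T_2(E)$ (Lemma \ref{id1.5}); this is legitimate because, modulo $[x_1,x_2,x_3]$, commutators are central and may be moved side by side, so any product of commutators in which some $z_i$ occurs twice is zero. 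Consequently a surviving product of commutators uses each odd variable at most once, contributing at most $m$ to the $Z$-degree, and a surviving term has $\deg_Z\leq m(p-1)+m=mp$.

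Finally I combine the two facts: every term of the normal form of $f$ has $Z$-degree exactly $n_2$, yet can carry $Z$-degree at most $mp$. Since $n_2\geq m(p+1)>mp$, no such term survives, so the normal form is identically $0$ and $f\equiv 0$, i.e. $f\in T_2(E)$. As $f$ was arbitrary this yields $V_{(n_1,n_2)}\subseteq T_2(E)$ and hence $c_{(n_1,n_2)}(E,V)=0$. I expect the only delicate point to be the bookkeeping: verifying that no reduction step mixes $Z$-degrees, so that every normal-form term genuinely retains degree $n_2$, and checking that $[z_i,u][z_i,v]\equiv 0$ is a bona fide graded substitution instance. Once these are secured the counting is immediate, and the gap between the sharp bound $mp$ and the stated bound $m(p+1)$ is harmless.
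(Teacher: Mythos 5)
Your proof is correct and follows essentially the same route as the paper: reduce to the normal form $\Pi(Y)\Pi(Z)\cdot(\text{multilinear product of commutators})$ modulo $[x_1,x_2,x_3]$ and its consequences, use $z_i^p\in T_2(E)$ to cap each odd exponent in $beg$ at $p-1$ and multilinearity to cap the commutator contribution at $m$, and conclude that no term of $Z$-degree $n_2\geq m(p+1)$ survives. Your observation that the argument actually gives the sharper threshold $n_2>mp$ is consistent with the paper's own counting, which likewise only needs some $b_i\geq p$.
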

\proof
It is sufficient to observe that the result is true if it is true on multihomogeneous polynomials. Hence let $f=f(y_1,\ldots,y_l,z_1,\ldots,z_m)$ be multihomogeneous such that $\deg_Yf=n_1$ and $\deg_Zf=n_2$, then we may assume \[f=y_1^{a_1}\cdots y_l^{a_l}z_1^{b_1}\cdots z_m^{b_m}g(y_1,\ldots,y_l,z_1,\ldots,z_m),\] where $g(y_1,\ldots,y_l,z_1,\ldots,z_m)$ is multilinear. Then $b_1+\cdots+b_m+m=\deg_Zf=n_2$ which implies $b_1+\ldots+b_m=n_2-m\geq mp$, hence some of the $b_i$'s is greater than $p$ and the assertion follows because of Lemma \ref{id2}.
\endproof

By Proposition \ref{cod2} we have the following.

\begin{corolario}
Fon any $n_1,n_2\in\N$ we have \[c_{(n_1,n_2)}(E,V)\leq\frac{(l+m)^{pql+pm+1}-1}{l+m-1}.\]
\end{corolario}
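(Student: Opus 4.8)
The plan is to realize $c_{(n_1,n_2)}(E,V)$ as the dimension of a subspace of the relatively free algebra $U_{l,m}(E)$, and then read off the inequality directly from Proposition \ref{cod2}.

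First I would unwind the definition of the $V$-codimension. By hypothesis the ambient alphabet $X=Y\cup Z$ is finite with $|Y|=l$ and $|Z|=m$, and $V_{(n_1,n_2)}$ is a linear subspace of $F\langle X\rangle$. Let $\pi\colon F\langle X\rangle\to F\langle X\rangle/T_2(E)=U_{l,m}(E)$ be the canonical projection. Restricting $\pi$ to $V_{(n_1,n_2)}$ gives a linear map whose kernel is exactly $V_{(n_1,n_2)}\cap T_2(E)$, so the first isomorphism theorem for vector spaces yields
\[
V_{(n_1,n_2)}/\big(V_{(n_1,n_2)}\cap T_2(E)\big)\;\cong\;\pi\big(V_{(n_1,n_2)}\big)\subseteq U_{l,m}(E).
\]
In other words $c_{(n_1,n_2)}(E,V)=\dim_F\pi\big(V_{(n_1,n_2)}\big)$ is the dimension of a subspace of $U_{l,m}(E)$.

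Next I would simply compare dimensions: since $\pi(V_{(n_1,n_2)})$ is a subspace of $U_{l,m}(E)$, we obtain $c_{(n_1,n_2)}(E,V)\leq\dim_F U_{l,m}(E)$, and Proposition \ref{cod2} bounds the right-hand side by $\frac{(l+m)^{pql+pm+1}-1}{l+m-1}$, which gives the claim for every pair $(n_1,n_2)\in\N^2$.

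The step deserving the most care --- and really the only obstacle --- is the bookkeeping of the variables. The bound of Proposition \ref{cod2} is stated for the relatively free algebra on the \emph{fixed} finite alphabet with $l$ variables of degree $0$ and $m$ variables of degree $1$, so I must make sure that $V_{(n_1,n_2)}$ is taken inside this same free algebra $F\langle X\rangle$ rather than over an unbounded set of indeterminates. This is guaranteed by the very definition of the $V$-codimension, in which $X$ is fixed finite with $|X^{g}|=l_g$; consequently the embedding above is legitimate and the bound holds uniformly in $n_1$ and $n_2$, no matter how large they are.
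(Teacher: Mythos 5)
Your proof is correct and is exactly the argument the paper intends: the corollary is stated as an immediate consequence of Proposition \ref{cod2}, and your identification of $V_{(n_1,n_2)}/(V_{(n_1,n_2)}\cap T_2(E))$ with a subspace of $U_{l,m}(E)$ via the canonical projection is precisely the (unwritten) step that makes the citation work.
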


Let $W$ be any generating subspace of $F\langle X\rangle$ and do consider $W(n)=W^{\otimes n}$. Let $n_1,n_2\in\N$ such that $n_1+n_2=n$. Because of Lemma \ref{id2} we have the $T_2$-ideal of $E$ is not homogeneous we cannot recover the graded codimensions $c_{(n_1,n_2)}(E,W(n))$ from the homogeneous ones, then we have to study them one by one.

\

For further use we recall the following combinatorial tool (see the book of Stanley \cite{sta1}).

\begin{prop}
The number of commutative monomials of degree $n$ in $k$ variables such that each variable has degree strictly less than $j$ is given by \[\kappa(n,j,k)=\sum_{r+sj=n}(-1)^s{k+r-1\choose r}{k\choose s}.\]
\end{prop}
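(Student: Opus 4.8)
The plan is to prove the formula by a generating-function computation, which is the most transparent route. First I would reformulate the quantity $\kappa(n,j,k)$ as the number of tuples $(e_1,\ldots,e_k)$ of non-negative integers with $e_1+\cdots+e_k=n$ and $e_i\le j-1$ for every $i$: a commutative monomial in $k$ variables is determined by its vector of exponents, and ``degree strictly less than $j$'' means precisely that each exponent is at most $j-1$.

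Next I would encode each variable by the polynomial $1+x+\cdots+x^{j-1}=\frac{1-x^{j}}{1-x}$, so that $\kappa(n,j,k)$ is exactly the coefficient of $x^{n}$ in
\[
\left(\frac{1-x^{j}}{1-x}\right)^{\!k}=(1-x^{j})^{k}\,(1-x)^{-k}.
\]
The heart of the argument is then to expand the two factors separately. The ordinary binomial theorem gives $(1-x^{j})^{k}=\sum_{s=0}^{k}(-1)^{s}\binom{k}{s}x^{sj}$, while the generalized (negative) binomial series gives $(1-x)^{-k}=\sum_{r\ge0}\binom{k+r-1}{r}x^{r}$.

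Finally I would extract the coefficient of $x^{n}$ from the Cauchy product of these two series. A product of terms $x^{sj}\cdot x^{r}$ contributes to $x^{n}$ exactly when $r+sj=n$, and its coefficient is $(-1)^{s}\binom{k}{s}\binom{k+r-1}{r}$; summing over all such pairs yields $\kappa(n,j,k)=\sum_{r+sj=n}(-1)^{s}\binom{k+r-1}{r}\binom{k}{s}$, as claimed. Equivalently one may run the same count by inclusion--exclusion on the set of variables whose exponent reaches $j$: substituting $e_i\mapsto e_i+j$ on a chosen $s$-subset reduces to an unrestricted count $\binom{k+(n-sj)-1}{n-sj}$, and the signs $(-1)^{s}\binom{k}{s}$ arise from the alternating inclusion--exclusion weights. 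There is no serious obstacle here; the only points requiring care are recalling the negative binomial expansion and observing that the convention $\binom{k}{s}=0$ for $s>k$ (together with the constraint $r\ge0$) automatically makes the displayed sum finite, so one need not worry about the exact range of the index $s$.
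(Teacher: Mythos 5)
Your argument is correct and complete: the count is the coefficient of $x^{n}$ in $\left(\frac{1-x^{j}}{1-x}\right)^{k}$, and expanding $(1-x^{j})^{k}$ by the binomial theorem and $(1-x)^{-k}$ by the negative binomial series immediately yields $\sum_{r+sj=n}(-1)^{s}\binom{k+r-1}{r}\binom{k}{s}$. The paper gives no proof of its own for this proposition — it simply cites Stanley's \emph{Enumerative Combinatorics} — and your generating-function computation (together with the equivalent inclusion--exclusion phrasing you sketch) is exactly the standard argument found there, so there is nothing to compare against and nothing missing.
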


We may also count the number of $p$-polynomials of a certain degree.

\begin{lema}\label{countingppolynomials}
Let $s$ be an integer. If $p$ divides $s$, the number of $p$-polynomials in $l$ variables of degree $s$ is \[p(s):=q^{\kappa(s/p,q,l)},\]otherwise it is 0 and we write $p(s)=1$.
\end{lema}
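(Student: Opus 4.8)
The goal is to count $p$-polynomials of a fixed degree $s$, so the plan is to reduce this counting problem to the combinatorial formula $\kappa(n,j,k)$ already recorded.

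First I would recall the definition of a $p$-polynomial in $l$ variables: it is a linear combination of monomials $y_1^{a_1}\cdots y_l^{a_l}$ (with trivial commutator part) in which each exponent $a_i$ satisfies $a_i\equiv 0 \pmod p$ and $0\le a_i<qp$. By the Observation preceding Proposition \ref{id5}, the space $ppol(y_1,\ldots,y_l)$ has dimension $q^l$, since each variable contributes $q$ admissible exponents $0,p,2p,\ldots,(q-1)p$. The key point is that a basis of the degree-$s$ graded piece of this space is given by the distinct monomials $y_1^{a_1}\cdots y_l^{a_l}$ with $\sum a_i=s$ and each $a_i\in\{0,p,\ldots,(q-1)p\}$; the number of such monomials, call it $N$, is exactly the exponent we want in $q^N$.

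Next I would perform the substitution $a_i=p c_i$. A monomial of degree $s$ exists only when $p\mid s$, which accounts for the case split in the statement; when $p\nmid s$ the degree-$s$ piece is $\{0\}$, so there is a single (empty) $p$-polynomial and $p(s)=1$ as declared. Assuming $p\mid s$, the condition becomes $\sum_{i=1}^l c_i=s/p$ with each $c_i\in\{0,1,\ldots,q-1\}$, i.e.\ $c_i<q$. This is precisely the count of commutative monomials of degree $s/p$ in $l$ variables with each variable of degree strictly less than $q$, which is $\kappa(s/p,q,l)$ by the Proposition quoted from Stanley.

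Finally, since distinct monomials are linearly independent, the number of $p$-polynomials of degree $s$ equals the number of subsets of this monomial basis weighted by field coefficients: each of the $\kappa(s/p,q,l)$ basis monomials can be assigned any of the $q$ coefficients in $F$ independently, giving $q^{\kappa(s/p,q,l)}$ distinct $p$-polynomials. The only subtlety to check is that the admissible exponent set $\{0,p,\ldots,(q-1)p\}$ corresponds bijectively to $\{0,1,\ldots,q-1\}$ under division by $p$, so that the bound $a_i<qp$ translates exactly into $c_i<q$; this is routine and presents no real obstacle. Thus $p(s)=q^{\kappa(s/p,q,l)}$, as claimed.
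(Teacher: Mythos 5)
Your argument is correct: the paper states this lemma without proof, and your reduction (substituting $a_i=pc_i$ to turn the exponent constraints $a_i\equiv 0\pmod p$, $a_i<qp$ into $\sum c_i=s/p$ with $c_i<q$, then counting the $q^{\kappa(s/p,q,l)}$ coefficient assignments on the resulting monomial basis) is exactly the routine computation the authors intend. The only cosmetic remark is that the paper's phrasing for $p\nmid s$ distinguishes "the number is $0$" from the bookkeeping convention $p(s)=1$ used later in Lemma \ref{combssstar2}, whereas you fold both into counting the zero polynomial; this does not affect correctness.
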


Moreover, we have the following combinatorial lemmas counting the number of polynomials of $SSi$ having degree $n_1$ with respect to even variables e degree $n_2$ with respect to the odd ones.

\begin{lema}\label{combss}
Let $n_1,n_2\in\N$, then the number of elements of $SS$ having 0-degree $n_1$ and 1-degree $n_2$ is given by:\[c_{(n_1,n_2)}(SS)=\sum_{s=0}^{\left\lfloor \frac{n_1+n_2}{2}\right\rfloor}{n_1+n_2\choose 2s}+\left\{\kappa(n_1,p,l)\kappa(n_2,p,m)\right\},\]where the last summand appears for non-multilinear elements of $SS$.
\end{lema}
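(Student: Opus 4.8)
The plan is to enumerate the elements of $SS$ of bidegree $(n_1,n_2)$ directly, exploiting the defining alternative in the definition of $SS$: a basis element $a$ lies in $SS$ precisely when either $\psi(a)=1$ (a pure power product whose every exponent is bounded by $p-1$) or $a$ is multilinear. Accordingly I would partition $SS_{(n_1,n_2)}$ into these two families, count each combinatorially, and match the two contributions with the two summands of the asserted formula, writing $n:=n_1+n_2$ throughout.

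First I would handle the multilinear elements. In such an $a$ every variable occurs exactly once, so $a$ is determined by deciding which of the $n$ letters are absorbed by the commutator block $\psi(a)$: if $a$ carries $s$ length-two commutators, then $2s$ letters are distributed among the commutators and the remaining $n-2s$ letters constitute $beg(a)$, placed in the unique increasing order. There are $\binom{n}{2s}$ ways to choose the $2s$ letters. The step I expect to be the main obstacle is showing that each such choice yields exactly one element of $SS$, rather than the $(2s-1)!!$ perfect matchings that a naive count would produce. This is where Lemma \ref{id1.5} is essential: modulo $\langle[x_1,x_2,x_3]\rangle_{T_2}\subseteq T_2(E)$ one has $[x_1,x_2][x_3,x_4]\equiv[x_1,x_3][x_2,x_4]$, and iterating this straightening together with the centrality of commutators furnished by Lemma \ref{id1}, any product of $s$ commutators on a fixed set of $2s$ letters collapses, up to sign, to a single canonical pairing. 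Hence $SS$ keeps one representative per $2s$-subset, and summing over $s$ produces $\sum_{s=0}^{\lfloor n/2\rfloor}\binom{n}{2s}$, the first summand.

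Next I would count the remaining (non-multilinear) elements. By the definition of $SS$, such an $a$ must satisfy $\psi(a)=1$, so it is a pure power product $a=beg(a)$ with $Deg_x\, beg(a)\le p-1$ for every $x$. Each such monomial factors uniquely as an even block in $y_1,\dots,y_l$ times an odd block in $z_1,\dots,z_m$, and the two blocks are chosen independently. The even block is a commutative monomial of degree $n_1$ in $l$ variables with all exponents strictly below $p$, counted by $\kappa(n_1,p,l)$ via the cited combinatorial formula for $\kappa$; the odd block is counted by $\kappa(n_2,p,m)$. Their product gives $\kappa(n_1,p,l)\kappa(n_2,p,m)$, the second summand. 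Assembling the two counts then yields the claim; the only bookkeeping point to reconcile with the stated formula is that the two families meet in the multilinear power products (those with $\psi(a)=1$ and every exponent equal to $1$), so one must verify that these monomials are allotted to the families exactly as the decomposition into the binomial sum and the $\kappa$-term prescribes. Once the rigidity of the commutator block is in hand, everything else reduces to the elementary choice $\binom{n}{2s}$ and to the proposition evaluating $\kappa$, and the formula follows.
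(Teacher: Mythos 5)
The paper states Lemma \ref{combss} without proof, so there is no argument of the authors' to compare against; your reconstruction (split $SS_{(n_1,n_2)}$ into multilinear elements and power products with exponents below $p$, use the straightening relation $[x_1,x_2][x_3,x_4]\equiv[x_1,x_3][x_2,x_4]$ of Lemma \ref{id1.5} together with the centrality of commutators to keep a single matching per $2s$-subset, and count the power products by $\kappa(n_1,p,l)\kappa(n_2,p,m)$) is certainly the intended one, and your identification of the collapse from $(2s-1)!!$ matchings to one as the crux is correct.

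However, the ``bookkeeping point'' you defer at the end is a genuine gap, not a formality, and as far as I can see it cannot be closed for the formula as written. Your two families are not disjoint: a monomial with $\psi(a)=1$ and all exponents equal to $1$ is simultaneously the $s=0$ contribution to the binomial sum and one of the monomials counted by $\kappa(n_1,p,l)\kappa(n_2,p,m)$. Carrying out your partition honestly gives $\sum_{s}\binom{n_1+n_2}{2s}+\kappa(n_1,p,l)\kappa(n_2,p,m)-N$, where $N$ is the number of multilinear power products, so you must either start the first sum at $s=1$ or subtract $N$ from the $\kappa$-term; you cannot simply ``assemble the two counts.'' A second point you leave implicit is which variables a multilinear element may use: your count $\binom{n_1+n_2}{2s}$ fixes the $n_1+n_2$ letters in advance, whereas the paper's own Lemma \ref{combss1} counts the analogous multilinear elements with the extra factors $\binom{m}{\beta}\binom{l}{2s-\beta}\binom{m-\beta}{n_2-\beta}\binom{l-2s+\beta}{n_1-2s+\beta}$, i.e.\ with a factor $\binom{l}{n_1}\binom{m}{n_2}$ for the choice of occurring variables. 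You should either state explicitly the convention that multilinear elements of bidegree $(n_1,n_2)$ live in $F\langle y_1,\ldots,y_{n_1},z_1,\ldots,z_{n_2}\rangle$, or include that factor; as it stands your proof and the two lemmas of the paper use three mutually inconsistent conventions, and a complete proof has to commit to one of them.
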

%
%
%
%

\begin{lema}\label{combss0}
Let $n_1,n_2\in\N$, then the number of elements of $SS0$ having 0-degree $n_1$ and 1-degree $n_2$ is given by:\[c_{(n_1,n_2)}(SS0)=\kappa(n_1,p,l)\kappa(n_2,2,m).\]
\end{lema}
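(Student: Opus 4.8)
The plan is to identify the elements of $SS0$ of $0$-degree $n_1$ and $1$-degree $n_2$ explicitly and then count them by separating the even from the odd variables. First I would unwind the definitions. An element $u \in SS0$ satisfies $\psi(u) = 1$, so $u = beg(u) = \Pi(Y)(u)\,\Pi(Z)(u)$ carries no commutator factor and is therefore a single monomial $y_1^{a_1}\cdots y_l^{a_l} z_1^{b_1}\cdots z_m^{b_m}$ of $Pr(X)$. The membership $u \in SS$ together with $\psi(u)=1$ forces $Deg_x\, beg(u) \leq p-1$ for every $x \in X$, while the Type-$0$ requirement that $\Pi(Z)(u)$ be $1$ or multilinear forces each $b_j \leq 1$. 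Hence the elements of $SS0$ in the prescribed bidegree are exactly the monomials whose even part $y_1^{a_1}\cdots y_l^{a_l}$ has $a_1 + \cdots + a_l = n_1$ with $0 \leq a_i \leq p-1$, and whose odd part $z_1^{b_1}\cdots z_m^{b_m}$ has $b_1 + \cdots + b_m = n_2$ with each $b_j \in \{0,1\}$.

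Next I would observe that the choice of the even part and the choice of the odd part are independent, so the total count factors as a product. The number of admissible even parts is the number of commutative monomials of degree $n_1$ in the $l$ variables $y_1,\ldots,y_l$ in which every variable occurs with degree strictly less than $p$; by the combinatorial proposition recalled above this is $\kappa(n_1,p,l)$. Likewise, the number of admissible odd parts is the number of commutative monomials of degree $n_2$ in the $m$ variables $z_1,\ldots,z_m$ in which every variable occurs with degree strictly less than $2$, which is $\kappa(n_2,2,m)$. Multiplying the two counts gives $c_{(n_1,n_2)}(SS0) = \kappa(n_1,p,l)\,\kappa(n_2,2,m)$, as claimed.

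There is essentially no analytic obstacle here; the whole content is bookkeeping with the definitions. The one point that deserves care — and the only place I expect a reader to stumble — is the asymmetry between the bounds on the even and odd exponents: the bound $a_i \leq p-1$ on the $y$-exponents comes from the defining condition of $SS$ (available because $\psi(u)=1$), whereas the sharper bound $b_j \leq 1$ on the $z$-exponents comes not from $SS$ but from the extra multilinearity clause in the definition of Type-$0$. Making this distinction explicit is exactly what pins down the second factor as $\kappa(n_2,2,m)$ rather than $\kappa(n_2,p,m)$.
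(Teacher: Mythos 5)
Your argument is correct: the paper states Lemma \ref{combss0} without proof, and your direct unwinding of the definitions (the bound $a_i\leq p-1$ coming from membership in $SS$ with $\psi(u)=1$, the bound $b_j\leq 1$ coming from the multilinearity clause of Type-$0$, and the resulting product $\kappa(n_1,p,l)\kappa(n_2,2,m)$) is exactly the intended verification. The distinction you highlight between the two exponent bounds is indeed the only point of substance.
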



\begin{lema}\label{combss1}
Let $n_1,n_2,k\in\N$, where $n_2\leq k$, then the number of elements of $SS1$ having 0-degree $n_1$ and 1-degree $n_2$ is given by:\[c_{(n_1,n_2)}(SS1)=\sum_{2s=0}^{n_1+n_2}\sum_{
\begin{array}{c}
    \beta\leq k\\
    \beta\leq 2s
\end{array}
}{m\choose\beta}{l\choose 2s-\beta}{m-\beta\choose n_2-\beta}{l-2s+\beta\choose n_1-2s+\beta}+\left\{\kappa(n_2,p,m)\kappa(n_1,p,l)\right\},\]where the last summand appears for non-multilinear elements of $SS1$.
\end{lema}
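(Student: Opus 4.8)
The plan is to count the elements of $SS1$ of the prescribed bidegree $(n_1,n_2)$ by sorting them according to the shape of their canonical form in $Pr(X)$. By the definition of $SS$, such an element $u$ is either multilinear (each occurring variable appearing exactly once in $beg(u)\psi(u)$) or has $\psi(u)=1$; these two families overlap exactly on the squarefree monomials. I would therefore organize the count as (multilinear elements) together with (non-multilinear elements), the latter being forced to satisfy $\psi(u)=1$ since any repeated variable rules out multilinearity. Throughout, the hypothesis $n_2\leq k$ ensures $deg_Z(u)=n_2\leq k$, so membership in $SS1$ imposes nothing beyond the fixed bidegree; in particular every index $\beta$ below satisfies $\beta\leq n_2\leq k$, which is why the constraint $\beta\leq k$ in the statement is automatic.

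First I would treat the multilinear elements. If $2s$ of the occurring variables are absorbed into the commutator factor $\psi(u)$, say $\beta$ odd and $2s-\beta$ even, I choose them by ${m\choose\beta}{l\choose 2s-\beta}$, and then select the remaining $n_2-\beta$ odd and $n_1-2s+\beta$ even variables of $beg(u)$ by ${m-\beta\choose n_2-\beta}{l-2s+\beta\choose n_1-2s+\beta}$. The decisive point is that, modulo $T_2(E)$, a fixed set of $2s$ distinct variables supports exactly one canonical product of $s$ commutators: by Lemma \ref{id1.5} the rebracketing $[a,b][c,d]\equiv[a,c][b,d]$ is always available, so all pairings agree up to sign and the PBW ordering underlying $Pr(X)$ singles out a unique representative; dimension-wise this is the familiar fact that the $2s$-commutator part of the multilinear component of $E$ is one-dimensional, summing to $\sum_s\binom{n}{2s}=2^{n-1}$. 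Hence there is no extra arrangement factor, and $beg(u)$ is likewise determined by its variable set because its factors are written in the fixed order. Summing over $s$ and $\beta$ reproduces the double sum in the statement, with the term $2s=0$ accounting for the squarefree monomials with $\psi(u)=1$.

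Finally I would count the non-multilinear elements. These must have $\psi(u)=1$, so they are monomials $\bigl(\prod y_{j}^{a_j}\bigr)\bigl(\prod z_{i}^{b_i}\bigr)$ of bidegree $(n_1,n_2)$ with every exponent at most $p-1$, and the number of these is $\kappa(n_1,p,l)\kappa(n_2,p,m)$, the last summand. I expect the main obstacle to be twofold. The first is making the ``one canonical commutator product per variable set'' claim fully rigorous, which is exactly where Lemma \ref{id1.5} together with the ordering conventions on $Pr(X)$ does the real work. The second is the bookkeeping of the squarefree monomials, which lie simultaneously in the multilinear family (the $2s=0$ term) and in the family enumerated by $\kappa(n_1,p,l)\kappa(n_2,p,m)$: one must verify that the intended partition of $SS1$ into its multilinear and non-multilinear parts assigns each squarefree monomial to a single summand, correcting by the term $\binom{l}{n_1}\binom{m}{n_2}$ if these squarefree contributions are otherwise tallied twice. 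Once both points are settled, assembling the two contributions yields the stated value of $c_{(n_1,n_2)}(SS1)$.
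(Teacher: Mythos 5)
The paper states Lemma \ref{combss1} (like the other counting lemmas of that section) without proof, so there is nothing to compare your argument against; judged on its own, your count is the natural one and follows the decomposition that the formula itself suggests. The multilinear part is handled correctly: choosing the $\beta$ odd and $2s-\beta$ even variables that enter $\psi(u)$, and then the remaining variables of $beg(u)$, gives exactly the summand ${m\choose\beta}{l\choose 2s-\beta}{m-\beta\choose n_2-\beta}{l-2s+\beta\choose n_1-2s+\beta}$, and your observation that $\beta\leq n_2\leq k$ makes the constraint $\beta\leq k$ vacuous is right. The normalization you single out --- one canonical product of commutators per $2s$-element variable set --- is precisely what $[x_1,x_2][x_3,x_4]-[x_1,x_3][x_2,x_4]\in\langle[x_1,x_2,x_3]\rangle_{T_2}$ from Lemma \ref{id1.5} provides, and it is consistent with the paper recording $\psi(a)$ as the bare word $x_{t_1}\cdots x_{t_{2l}}$. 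Your second ``obstacle'' is in fact the one genuine issue, and you diagnose it correctly: the squarefree monomials are produced both by the $2s=0$ term of the double sum, which contributes ${m\choose n_2}{l\choose n_1}$, and by $\kappa(n_1,p,l)\kappa(n_2,p,m)$, which by its definition counts \emph{all} monomials with exponents strictly less than $p$, not only the non-multilinear ones. So either the last summand must be read as $\kappa(n_1,p,l)\kappa(n_2,p,m)-{l\choose n_1}{m\choose n_2}$, or the double sum must start at $2s=2$; as printed the formula overcounts by ${l\choose n_1}{m\choose n_2}$. Since you identify this overlap and supply the correction term, your proof is complete once you commit to one of these two readings; no further ideas are missing.
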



\begin{lema}\label{combss2}
Let $n_1,n_2,k\in\N$, then the number of elements of $SS2$ having 0-degree $n_1$ and 1-degree $n_2$ is given by:\[c_{(n_1,n_2)}(SS2)=\sum_{2s=0}^{n_1+n_2}\sum_{
\begin{array}{c}
    \beta\leq k\\
    \beta\leq 2s\\
    \beta\leq \frac{k+1+2s-n_2}{2}
\end{array}
}{l\choose\beta}{m\choose 2s-\beta}{l-\beta\choose n_1-\beta}{m-2s+\beta\choose n_2-2s+\beta}+\left\{\kappa(n_2,p,m)\kappa(n_1,p,l)\right\},\]
where the last summand appears for non-multilinear elements of $SS2$ when $n_2\leq k+1.$\end{lema}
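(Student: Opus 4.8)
The plan is to count the elements of $SS2$ of multidegree $(n_1,n_2)$ by splitting them, as the definition of $SS$ suggests, into two disjoint families: the multilinear ones and those with $\psi(a)=1$; recall that every element of $SS$ either is multilinear or satisfies $\psi(a)=1$ with $Deg_x beg(a)\le p-1$ for all $x$. The two summands in the statement will come, respectively, from the multilinear elements (the double sum) and from the commutator-free monomials (the bracketed term), and the main work is the first family.

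First I would dispose of the monomials with $\psi(a)=1$. Such an $a$ equals $beg(a)=(\prod_r y_{j_r}^{a_{j_r}})(\prod_r z_{i_r}^{b_{i_r}})$ with every exponent at most $p-1$; since $deg_Y(\psi(a))=0$ and $deg_Z(beg(a))+deg_Y(\psi(a))=deg_Z(a)=n_2$, the two Type-$2$ inequalities collapse to the single condition $n_2\le k+1$. When $n_2\le k+1$ the number of such monomials is the number of commutative monomials of degree $n_1$ in the $l$ even variables and of degree $n_2$ in the $m$ odd variables, each variable occurring fewer than $p$ times, which by definition of $\kappa$ is $\kappa(n_1,p,l)\kappa(n_2,p,m)$; this is the bracketed summand, appearing exactly when $n_2\le k+1$ as claimed.

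For the multilinear family I would first reduce to a normal form. By Lemma \ref{id1} every commutator of length $\ge 3$ is a graded identity, so a multilinear element of $SS$ is a product $beg(a)[x_{t_1},x_{t_2}]\cdots[x_{t_{2s-1}},x_{t_{2s}}]$ of length-two commutators on distinct variables; by the relation $[x_1,x_2][x_3,x_4]-[x_1,x_3][x_2,x_4]\in\langle[x_1,x_2,x_3]\rangle_{T_2}$ of Lemma \ref{id1.5} the product of commutators on a fixed set of $2s$ variables has a unique normal form, namely the consecutive pairing in increasing order. The key point, which I regard as the crux of the argument, is that this makes a multilinear element of $SS$ depend only on the partition of its variables into four roles: the even and odd variables lying inside the commutators and those lying in $beg(a)$. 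Writing $\beta=deg_Y(\psi(a))$ for the number of even commutator variables and $2s=deg_Y(\psi(a))+deg_Z(\psi(a))$ for the total number of commutator variables (even since commutators come in pairs), multilinearity forces $2s-\beta$ odd commutator variables, $n_1-\beta$ even variables in $beg(a)$ and $n_2-(2s-\beta)$ odd variables in $beg(a)$. Choosing these four sets successively from the $l$ even and $m$ odd variables available yields $\binom{l}{\beta}\binom{m}{2s-\beta}\binom{l-\beta}{n_1-\beta}\binom{m-2s+\beta}{n_2-2s+\beta}$, with out-of-range choices automatically killed by the vanishing of binomial coefficients.

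It then remains to impose membership in $SS2$ and to sum. Translating the two Type-$2$ inequalities $deg_Y(\psi(a))\le k$ and $deg_Z(beg(a))+deg_Y(\psi(a))\le k+1$ into the parameters gives $\beta\le k$ and $n_2-2s+2\beta\le k+1$, i.e. $\beta\le\frac{k+1+2s-n_2}{2}$, while the inequality $\beta\le 2s$ merely records that there cannot be more even commutator variables than commutator variables. Summing the product of binomials over $0\le 2s\le n_1+n_2$ and over the admissible $\beta$ produces the displayed double sum, and adding the commutator-free contribution from the previous step completes the count. The one point requiring care is the treatment of the degenerate term $2s=0$, which reproduces the squarefree monomials, so that in assembling the two families one must make sure each element of $SS2$ is counted once; apart from this bookkeeping and the normal-form reduction, the remaining steps are routine manipulations with binomial coefficients.
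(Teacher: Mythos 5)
The paper states Lemma \ref{combss2}, like the other counting lemmas of Section 4, without any proof, so there is no argument of the authors to set yours against; judged on its own, your count is the natural (and surely the intended) one and is essentially correct. The split of $SS2$ into monomials with $\psi(a)=1$ and exponents at most $p-1$ (whose Type-$2$ conditions do collapse to the single inequality $n_2\le k+1$, giving $\kappa(n_1,p,l)\kappa(n_2,p,m)$) and multilinear elements is right; the reduction of the commutator part to a unique normal-form pairing via $[x_1,x_2][x_3,x_4]-[x_1,x_3][x_2,x_4]\in\langle[x_1,x_2,x_3]\rangle_{T_2}$ is exactly what makes a multilinear element depend only on the four sets of variables, and your translation of $deg_Y(\psi(a))\le k$ and $deg_Z(beg(a))+deg_Y(\psi(a))\le k+1$ into $\beta\le k$ and $\beta\le\frac{k+1+2s-n_2}{2}$ is correct. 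The one point you flag but do not settle is a real issue and should not be left as ``bookkeeping'': the two families are not disjoint, since a squarefree monomial is simultaneously multilinear and commutator-free. Such a monomial is counted once by the term $2s=0$, $\beta=0$ of the double sum, which contributes ${l\choose n_1}{m\choose n_2}$ exactly when $n_2\le k+1$, and counted again inside $\kappa(n_1,p,l)\kappa(n_2,p,m)$. To turn your argument into a proof of a correct identity you must either start the double sum at $2s=2$ or replace the bracketed term by the number of non-multilinear monomials, namely $\kappa(n_1,p,l)\kappa(n_2,p,m)-{l\choose n_1}{m\choose n_2}$; as literally displayed the right-hand side overcounts by ${l\choose n_1}{m\choose n_2}$ whenever $n_2\le k+1$. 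This is really a defect of the displayed formula rather than of your method --- and it is consistent with the paper's own caveat that the last summand ``appears for non-multilinear elements'' --- but your proof as written establishes the corrected statement, not the one on the page, and you should say so explicitly.
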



\begin{lema}\label{combss3}
Let $n_1,n_2,k\in\N$, then the number of elements of $SS3$ having 0-degree $n_1$ and 1-degree $n_2$ is given by:\[c_{(n_1,n_2)}(SS3)=\sum_{2s=0}^{n_1+n_2}\sum_{
\begin{array}{c}
    \beta\leq k\\
    \beta\leq 2s\\
    \beta<\frac{k+1+2s-n_2}{2}
\end{array}
}{l\choose\beta}{m\choose 2s-\beta}{l-\beta\choose n_1-\beta}{m-2s+\beta\choose n_2-2s+\beta}\]\[+\sum_{2s=0}^{n_1+n_2}\sum_{
\begin{array}{c}
    \beta\leq k\\
    \beta\leq 2s\\
    \beta=\frac{k+1+2s-n_2}{2}
\end{array}
}{l\choose\beta}{m-1\choose 2s-\beta}{l-\beta\choose n_1-\beta}{m-2s+\beta\choose n_2-2s+\beta}+\left\{\kappa(n_2,p,m)\kappa(n_1,p,l)\right\},\]
where the last summand appears for non-multilinear elements of $SS3$ when $n_2\leq k+1.$\end{lema}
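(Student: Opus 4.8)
The plan is to count the elements of $SS3$ of $0$-degree $n_1$ and $1$-degree $n_2$ directly from the membership conditions, exactly as in the proofs of Lemmas \ref{combss}--\ref{combss2}. Since $SS3\subseteq SS2$ and the two classes are distinguished only by the extra clause that becomes active at the extremal value $\deg_Z(beg(u))+\deg_Y(\psi(u))=k+1$, I would reuse the stratification already set up for Lemma \ref{combss2} and re-examine only that single stratum. As there, I split $SS3$ into the multilinear elements (which produce the double sums) and the non-multilinear ones, which by the definition of $SS$ must satisfy $\psi(u)=1$ (these produce the $\kappa$-term).

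For the multilinear part I would stratify by the number $s$ of commutators, so that $\deg\psi(u)=2s$, and by $\beta:=\deg_Y(\psi(u))$. The key structural input is that, modulo $\langle[x_1,x_2,x_3]\rangle_{T_2}$, any product of $s$ length-$2$ commutators on a prescribed set of $2s$ variables is unique up to sign (Lemma \ref{id1.5}); hence a multilinear element of $SS$ is completely determined by the disjoint partition of its variables into those sitting in $beg(u)$ and those sitting in $\psi(u)$, with no extra ordering factor. Once $s$ and $\beta$ are fixed, the numbers of even and odd variables placed in $beg$ and in $\psi$ are forced by $n_1$ and $n_2$, and choosing these subsets yields the four binomial coefficients ${l\choose\beta}$, ${m\choose 2s-\beta}$, ${l-\beta\choose n_1-\beta}$, ${m-2s+\beta\choose n_2-2s+\beta}$. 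The $SS2$-membership conditions $\deg_Y(\psi(u))\le k$ and $\deg_Z(beg(u))+\deg_Y(\psi(u))\le k+1$ translate into the ranges $\beta\le k$ and $\beta\le\frac{k+1+2s-n_2}{2}$, the remaining bound $\beta\le 2s$ being automatic.

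The crux, and the step I expect to be the main obstacle, is the boundary stratum $\beta=\frac{k+1+2s-n_2}{2}$, i.e.\ $\deg_Z(beg(u))+\deg_Y(\psi(u))=k+1$, where the defining clause of $SS3$ (as opposed to $SS2$) demands $Deg_{pr(z)(u)}\psi(u)=0$. Here I must show that this clause removes precisely those configurations in which the distinguished least odd variable $pr(z)(u)$ is fed into a commutator, so that the $2s-\beta$ odd variables of $\psi(u)$ must be chosen from $m-1$ rather than $m$ candidates; this is what replaces ${m\choose 2s-\beta}$ by ${m-1\choose 2s-\beta}$ and splits the double sum into its two pieces (strict inequality $\beta<\frac{k+1+2s-n_2}{2}$ versus equality). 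The delicate point is to reconcile the pointwise condition on $pr(z)(u)$ with the global subset count, which I would do by appealing to the normal form underlying Proposition \ref{proposicao 10.16}: at the boundary the relations (4)--(6) of Theorem \ref{teorema 10.17} force any admissible $SS3$ representative to keep its least odd variable outside the commutator part, and a short bijective argument then identifies exactly the excluded configurations.

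Finally, for the non-multilinear family ($\psi(u)=1$) the $SS$-condition caps every exponent in $beg(u)$ by $p-1$, so counting the even and odd parts separately gives $\kappa(n_1,p,l)$ and $\kappa(n_2,p,m)$; here the odd exponents genuinely survive (unlike in the canonical grading) because odd elements of $E_k$ may have nonzero square. Such an element lies in $SS2$, hence is eligible for $SS3$, exactly when $\deg_Z(beg(u))=n_2\le k+1$, and the additional $SS3$ clause is then vacuous since $\psi(u)=1$; this explains why the summand $\kappa(n_2,p,m)\kappa(n_1,p,l)$ appears precisely when $n_2\le k+1$. Adding the three contributions yields the asserted formula.
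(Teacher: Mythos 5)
The paper states Lemma \ref{combss3} with no proof at all (as with the other counting lemmas \ref{combss}--\ref{combss2}), so there is nothing to compare your argument against; it has to stand on its own. Your overall skeleton is the right one and surely the intended one: split off the $\psi(u)=1$ elements (giving $\kappa(n_1,p,l)\kappa(n_2,p,m)$ subject to $n_2\le k+1$), stratify the multilinear elements by $s$ and $\beta=\deg_Y(\psi(u))$, use Lemma \ref{id1.5} to see that a product of commutators on a prescribed variable set is unique up to sign so that only subset choices are counted, and translate the $SS2$ constraints into $\beta\le k$ and $\beta\le\frac{k+1+2s-n_2}{2}$. All of that is correct.

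The gap is exactly at the step you yourself flag as the crux, and as written your resolution of it does not work. You count the multilinear stratum by choosing \emph{disjoint} variable sets for $beg(u)$ and $\psi(u)$ (that is what the product $\binom{m}{2s-\beta}\binom{m-2s+\beta}{n_2-2s+\beta}$ encodes), but then at the boundary $\beta=\frac{k+1+2s-n_2}{2}$ you claim the $SS3$ clause $Deg_{pr(z)(u)}\psi(u)=0$ ``removes precisely those configurations in which $pr(z)(u)$ is fed into a commutator.'' Under your own disjoint-support model this removes nothing: $pr(z)(u)$ is by definition the least odd variable occurring in $beg(u)$, so for a genuinely multilinear element it automatically does not occur in $\psi(u)$, the clause is vacuous, and your count of the boundary stratum would coincide with that of $SS2$ --- contradicting the replacement of $\binom{m}{2s-\beta}$ by $\binom{m-1}{2s-\beta}$ in the statement. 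Note also that $\binom{m-1}{2s-\beta}\binom{m-2s+\beta}{n_2-2s+\beta}$ counts pairs in which one \emph{fixed} odd variable is barred from $\psi(u)$, which is not the same as barring the ($u$-dependent) least odd variable of $beg(u)$, nor the same as requiring the globally least odd variable of $u$ to lie in $beg(u)$ (already for $m=3$, $|A|=|B|=1$ these three counts are $4$, $6$ and $3$ respectively). So the ``short bijective argument'' you defer to is not short: it requires unwinding which representatives actually survive the reduction by the identities (4)--(6) of Theorem \ref{teorema 10.17} underlying Proposition \ref{proposicao 10.16}, and reconciling that with the literal definition of $SS3$. As it stands, this essential step is asserted rather than proved, so the proposal has a genuine gap.
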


In light of the above lemmas, the next ones count the number of polynomials $f_iu_i$, where $f_i$ is a $p$-polynomial and $u_i$'s are element of $SSj$, having degree $n_1$ with respect to even variables e degree $n_2$ with respect to the odd ones.

\begin{lema}\label{combssstar}
Let $n_1,n_2\in\N$, then the number of elements of the type $fu_i$, where $f$ is a monomial which is a $p$-polynomial and $u_i$'s are element of $SSj$, having degree $n_1$ with respect to even variables e degree $n_2$ with respect to the odd ones is given by:
\[c_{(n_1,n_2)}^*(SSj):=\sum_{
\begin{array}{c}
    s_i\leq q-1\\
    p\sum s_i\leq n_1
\end{array}
}c_{(n_1-p\sum s_i,n_2)}(SSj).\]
\end{lema}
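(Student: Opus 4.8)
The plan is to prove Lemma \ref{combssstar} by a direct counting argument that decomposes each polynomial $fu_i$ into its two factors and counts the possibilities independently. Let me think about what the statement is actually asking.

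We have elements of the form $fu_i$ where $f$ is a monomial that is a $p$-polynomial and $u_i \in SSj$. A monomial $p$-polynomial in $l$ even variables has the form $y_1^{ps_1}\cdots y_l^{ps_l}$ with each $s_i \leq q-1$ (from the definition of $p$-polynomial: degree divisible by $p$ and strictly less than $pq$). The total degree of such $f$ is $p\sum s_i$, and $f$ contributes only to the even degree.

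So the idea: I want to count pairs $(f, u_i)$ such that $fu_i$ has 0-degree $n_1$ and 1-degree $n_2$. Since $f$ is a monomial $p$-polynomial in the even variables, it has 1-degree $0$ and some 0-degree $p\sum s_i$. Then $u_i$ must carry the remaining 0-degree $n_1 - p\sum s_i$ and the full 1-degree $n_2$. The key observation is that the product $fu_i$ in $SSj$ (modulo identities) is uniquely determined by the pair, so counting pairs equals counting the products.

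First I would fix the even-degree split: for each choice of exponents $(s_1,\ldots,s_l)$ with $s_i \leq q-1$, the monomial $f = y_1^{ps_1}\cdots y_l^{ps_l}$ is determined and has 0-degree $p\sum s_i$. For $fu_i$ to have 0-degree $n_1$, we need $p\sum s_i \leq n_1$ and $u_i$ must have 0-degree $n_1 - p\sum s_i$. Second, for each such $f$, the number of compatible $u_i \in SSj$ is exactly $c_{(n_1 - p\sum s_i, n_2)}(SSj)$ by definition of that quantity.

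Here is the subtle point I would need to address carefully, and which I expect to be the main obstacle: the formula as written sums $c_{(n_1-p\sum s_i,n_2)}(SSj)$ over all tuples $(s_1,\ldots,s_l)$ satisfying $s_i \leq q-1$ and $p\sum s_i \leq n_1$, but it treats the sum as ranging over the tuples rather than over the value $\sum s_i$. This is consistent only if distinct monomials $f$ (distinct tuples) genuinely give distinct products $fu_i$ and if no double-counting occurs. I would verify that the map $(f,u_i) \mapsto fu_i$ is injective on the relevant set, i.e.\ that the factorization of a product into its monomial $p$-polynomial part $beg$ and its $SSj$ part is unique modulo the graded identities. This uniqueness is essentially guaranteed by Propositions \ref{id3}, \ref{oo}, \ref{proposicao 9.2}, and \ref{proposicao 10.16}, which express every polynomial uniquely as $\sum f_i u_i$ with $u_i$ distinct elements of the appropriate $SSj$; since here each $f_i$ is a single monomial $p$-polynomial, the pair $(f, u_i)$ is recovered from $fu_i$ by reading off the even-variable exponents in $beg$ that are forced to be multiples of $p$ versus those arising from $u_i$.

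Finally I would assemble the count: summing over all admissible even-degree tuples $(s_1,\ldots,s_l)$, the total is $\sum c_{(n_1-p\sum s_i,n_2)}(SSj)$ subject to $s_i \leq q-1$ and $p\sum s_i \leq n_1$, which is exactly the claimed formula for $c^*_{(n_1,n_2)}(SSj)$. The routine part is matching the index conventions; the genuine content is the injectivity of the factorization, which I would cite from the relevant proposition for each grading rather than reprove. No separate argument is needed for the 1-degree since $f$ never involves odd variables, so $u_i$ alone must account for all of $n_2$, and this is already built into the definition of $c_{(n_1-p\sum s_i,n_2)}(SSj)$.
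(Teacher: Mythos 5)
Your counting argument is correct, and in fact the paper states Lemma \ref{combssstar} without any proof at all, so your write-up supplies exactly the missing justification: a monomial $p$-polynomial is $y_1^{ps_1}\cdots y_l^{ps_l}$ with $0\leq s_i\leq q-1$, it carries no odd degree, and for each admissible tuple $(s_1,\ldots,s_l)$ the compatible $u_i\in SSj$ are counted by $c_{(n_1-p\sum s_i,n_2)}(SSj)$. You also correctly isolate the one point of substance, the injectivity of $(f,u)\mapsto fu$, and your resolution is the right one: since every $u\in SSj$ satisfies $Deg_{y}(beg(u))\leq p-1$ (and $\leq 1$ in the multilinear case) while the exponents of $f$ are multiples of $p$ below $pq$, the decomposition $d_i=ps_i+a_i$ of each even-variable degree is forced, so distinct pairs give distinct elements of $Pr(X)$. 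One small correction: Propositions \ref{id3}, \ref{oo}, \ref{proposicao 9.2} and \ref{proposicao 10.16} only assert that the products $f_iu_i$ span the relevant quotient, not that the factorization is unique; the uniqueness you need is precisely your exponent argument (echoed in the paper's unproved remark that $fu=f'u'$ with $u=u'$ forces $f=f'$), so you should lean on that rather than on those propositions. With that adjustment the proof is complete.
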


\begin{lema}\label{combssstar2}
Let $n_1,n_2\in\N$, then the number of elements of the type $fu_i$, where $f$ is a $p$-polynomial and $u_i$'s are element of $SSj$, having degree $n_1$ with respect to even variables e degree $n_2$ with respect to the odd ones is given by:
\[c_{(n_1,n_2)}^{\circ}(SSj):=\sum_{
\begin{array}{c}
    s\leq n_1
\end{array}
}p(s)c_{(n_1-s,n_2)}(SSj).\]
\end{lema}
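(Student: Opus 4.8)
The plan is to establish the formula by a single degree decomposition, separating in each product $fu_i$ the $p$-polynomial factor $f$ from the $SSj$-factor $u_i$ and counting the two independently. The crucial point is that the two factors play complementary roles with respect to the grading: since a $p$-polynomial lies in $F\langle y_1,\ldots,y_l\rangle$ with $\psi(f)=1$, it has $1$-degree $0$, so its total degree equals its $0$-degree, say $s$, and by the definition of a $p$-polynomial this $s$ is a multiple of $p$.

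First I would fix the bidegree $(n_1,n_2)$ and parametrise the products by $s=\deg f$. Because $f$ contributes $0$ to the $1$-degree and $s$ to the $0$-degree, a product $fu_i$ has $0$-degree $n_1$ and $1$-degree $n_2$ exactly when $u_i\in SSj$ has $0$-degree $n_1-s$ and $1$-degree $n_2$; in particular $0\le s\le n_1$. For each such $s$ I would count the two factors separately: by Lemma~\ref{countingppolynomials} there are $p(s)$ $p$-polynomials of degree $s$ in $y_1,\ldots,y_l$ (and none when $p\nmid s$, so only multiples of $p$ contribute), while the appropriate one of Lemmas~\ref{combss0}--\ref{combss3} provides $c_{(n_1-s,n_2)}(SSj)$ elements of $SSj$ of the required bidegree. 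Multiplying these two counts and summing over $s\le n_1$ yields the announced expression $\sum_{s\le n_1}p(s)\,c_{(n_1-s,n_2)}(SSj)$.

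The one step that is not pure bookkeeping, and which I expect to be the main obstacle, is verifying that distinct admissible pairs $(f,u_i)$ give distinct elements $fu_i$, so that the number of products really equals the number of pairs. I would deduce this from the normal-form results already at our disposal (Proposition~\ref{id3} together with its analogues~\ref{oo}, \ref{proposicao 9.2} and \ref{proposicao 10.16}): expanding $f=\sum_m\alpha_m m$ over monomial $p$-polynomials gives $fu_i=\sum_m\alpha_m(mu_i)$, and the elements $mu_i$ are precisely the basis elements those propositions attach to the $(n_1,n_2)$-component of the relatively free algebra. Their linear independence shows that for a fixed $u_i$ the map $f\mapsto fu_i$ is injective, and that products built from different $u_i$ never coincide except through the trivial product $f=0$; hence the count of products agrees with the count of pairs, which is exactly the stated formula.
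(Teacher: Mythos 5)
The paper states Lemma~\ref{combssstar2} without any proof (as with the other counting lemmas of that section), so there is no argument of the authors' to compare yours against; your decomposition by the degree $s$ of the $p$-polynomial factor, followed by independent counts of the two factors, is the natural and essentially correct justification of the formula. Two points deserve attention. First, your parenthetical ``none when $p\nmid s$, so only multiples of $p$ contribute'' is the mathematically correct reading, but it does not match the convention actually written in Lemma~\ref{countingppolynomials}, which sets $p(s)=1$ (not $0$) when $p\nmid s$; under that convention the displayed sum acquires spurious terms $c_{(n_1-s,n_2)}(SSj)$ for every $s$ not divisible by $p$, so either the convention or the formula as printed is off, and your derivation silently corrects this rather than proving the statement as literally written. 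Second, for the injectivity step you cite Propositions~\ref{id3}, \ref{oo}, \ref{proposicao 9.2} and \ref{proposicao 10.16}, but those only give that the products $mu_i$ span the relevant component; their linear independence is the content of the separate propositions of Section~5 (the statements that the sets $pPol$-$SSj(a_1,\ldots,b_m)$ are linearly independent), and that is what you should invoke if you want distinctness of the elements $fu_i$ modulo $T_2(E)$. If instead the lemma is read as a purely formal count of pairs $(f,u_i)$ in the free algebra --- which is all that is needed for the upper bounds it feeds into --- the distinctness is immediate and no independence result is required at all.
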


\section{$\Z_2$-graded homogeneous codimensions}
We start with the $\Z_2$-graded homogeneous codimensions. In this section $V$ will denote the set of multihomogeneous polynomials. In this case we are able to find an exact value for $c_{(n_1,n_2)}(E,V)$ for each homogeneous $\Z_2$-grading of $E$.

\

Let $Multi(a_{1},\ldots,a_{l},b_{1},\ldots,b_{m})$ be the set of multihomogeneous polynomials of ${F \langle X
\rangle}$ in the variables $y_{1},\ldots,y_{l},z_{1},\ldots,z_{m}$ and multidegree $(a_{1},\ldots,a_{l},b_{1},\ldots,b_{m})$ and let us denote by \[MultiFree(a_{1},\ldots,a_{l},b_{1},\ldots,b_{m}):=Multi(a_{1},\ldots,a_{l},b_{1},\ldots,b_{m})/(Multi(a_{1},\ldots,a_{l},b_{1},\ldots,b_{m})\cap T_2(E)).\]

Let us note that
\begin{center}
$1 \leq a_{1},\ldots,a_{l} \leq pq $ and $1\leq b_{1},\ldots,b_{m}
\leq p$.
\end{center}

Then from now on if $(a_{1},\ldots,a_{l},b_{1},\ldots,b_{m})$ is the multidegree of a multihomogeneous polynomial, we tacitely assume $a_{1},\ldots,a_{l},b_{1},\ldots,b_{m}$
are bounded as above.

\subsection{$E_{can}$}

\

\begin{def.}
Let us denote by $pPol-SS0(a_{1},\ldots,a_{l},b_{1},\ldots,b_{m})$
the set of polynomials of type $fu \in
Multifree(a_{1},\ldots,a_{l},b_{1},\ldots,b_{m})$, where \text{\rm
$f$ is a monomial $p$-polynomial with coefficient 1, $u_i\in SS0$.}
\end{def.}

It is not difficult to see that if $fu,f'u' \in
Multifree(a_{1},\ldots,a_{l},b_{1},\ldots,b_{m})$ are such that $u =
u'$, then $f = f'$.

As a consequence of Proposition \ref{oo}, we have
$pPol-SS0(a_{1},\ldots,a_{l},b_{1},\ldots,b_{m})$ is a generating set of the vector space
$Multifree(a_{1},\ldots,a_{l},b_{1},\ldots,b_{m})$. In what follows we shall prove that the set of polynomials $pPol-SS0(a_{1},\ldots,a_{l},b_{1},\ldots,b_{m})$ is a linearly independent set.

\begin{prop}
The set $pPol-SS0(a_{1},\ldots,a_{l},b_{1},\ldots,b_{m})$ is linearly independent.
\end{prop}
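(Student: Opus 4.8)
The plan is to show that $pPol-SS0(a_{1},\ldots,a_{l},b_{1},\ldots,b_{m})$ contains \emph{at most one} element, and that when it is nonempty this element does not belong to $T_{2}(E_{can})$. First I would fix a typical element $fu$ of the set. Here $u = y_{1}^{c_{1}}\cdots y_{l}^{c_{l}} z_{i_{1}}\cdots z_{i_{t}}\in SS0$, so the $z_{i_{r}}$ are pairwise distinct and $0\leq c_{j}\leq p-1$ (the latter because $u\in SS$ and $\psi(u)=1$), while $f = y_{1}^{pd_{1}}\cdots y_{l}^{pd_{l}}$ is a monomial $p$-polynomial, whence $0\leq d_{j}\leq q-1$. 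Comparing multidegrees, the odd part is forced: every $b_{i}\in\{0,1\}$ and $z_{i_{1}}\cdots z_{i_{t}}$ is the product of those $z_{i}$ with $b_{i}=1$; the even exponents satisfy $a_{j}=pd_{j}+c_{j}$. Since $0\leq c_{j}\leq p-1$, this is nothing but the division of $a_{j}$ by $p$, so $(d_{j},c_{j})$ — hence both $f$ and $u$ — is uniquely determined by the multidegree. This is exactly where the preceding observation (that $u=u'$ forces $f=f'$) is used to finish uniqueness, once one sees that the even residues and the odd parts of any two candidates must coincide. Thus the set is either empty (if some $b_{i}\geq 2$, or no admissible decomposition exists) or a singleton $\{fu\}$.

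It then remains to verify that the unique $fu$, when it exists, is nonzero in $Multifree(a_{1},\ldots,a_{l},b_{1},\ldots,b_{m})$, i.e. that $fu\notin T_{2}(E_{can})$. For this I would produce a single graded evaluation. Choose distinct indices $k_{1}<\cdots<k_{t}$ and define the graded homomorphism $\varphi\colon F\langle X\rangle\to E_{can}$ by $\varphi(y_{j})=1_{E}$ for all $j$ and $\varphi(z_{i_{r}})=e_{k_{r}}$ for the odd variables occurring in $u$ (and arbitrarily on the variables that do not appear in $fu$). Since $1_{E}\in E^{0}$ and $e_{k_{r}}\in E^{1}$, the map $\varphi$ respects the $\Z_2$-grading. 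Then $\varphi(fu)=e_{k_{1}}\cdots e_{k_{t}}$, a basis element of $E$ (a product of distinct generators in increasing order), hence nonzero; this covers also the degenerate case $t=0$, where $\varphi(fu)=1_{E}$. As $\varphi$ is a graded homomorphism not annihilating $fu$, and $T_{2}(E_{can})$ is the intersection of the kernels of all such maps, we conclude $fu\notin T_{2}(E_{can})$, so the singleton is linearly independent.

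The point that requires care is that $T_{2}(E_{can})$ is \emph{not} homogeneous: the identity $y_{1}^{pq}-y_{1}^{p}$ relates monomials of different total degree, so a priori one might worry that $Multi(a_{1},\ldots,a_{l},b_{1},\ldots,b_{m})\cap T_{2}(E_{can})$ hides a relation forcing $fu$ to vanish. The evaluation $\varphi$ dispels this cleanly, since it yields a manifestly nonzero value and so no analysis of $Multi\cap T_{2}(E_{can})$ is needed; the only structural input is Theorem \ref{ecan} (equivalently Proposition \ref{oo}), which already guarantees that $SS0$ together with the $p$-polynomials spans $Multifree$, so that the present linear independence upgrades this spanning set to a basis. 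I expect this to be the easiest of the four cases precisely because $SS0$ is multilinear in the odd variables, which collapses each multidegree component to a single basis element; the corresponding statements for $SS1$, $SS2$ and $SS3$ should instead force several elements into one multidegree and will require separating them by a whole family of evaluations rather than the single map used here.
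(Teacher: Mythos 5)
Your proof is correct, but it takes a genuinely different route from the paper's. You first observe that for the canonical grading the set $pPol-SS0(a_{1},\ldots,a_{l},b_{1},\ldots,b_{m})$ is at most a singleton: since $u\in SS0$ has $\psi(u)=1$ and multilinear odd part, the multidegree forces the odd factor, and writing $a_{j}=pd_{j}+c_{j}$ with $0\leq c_{j}\leq p-1$ determines $(f,u)$ uniquely; you then certify that the unique candidate is not in $T_{2}(E_{can})$ by the single evaluation $y_{j}\mapsto 1_{E}$, $z_{i_{r}}\mapsto e_{k_{r}}$. The paper instead runs the same machinery it uses for $E_{\infty}$, $E_{k^{*}}$ and $E_{k}$: it takes an arbitrary vanishing combination $\sum_{i}\gamma_{i}f_{i}u_{i}$, isolates the largest $u_{i}$ in the order on $SS$, invokes Corollary \ref{cor1} to choose scalars with $f_{i}(\lambda_{1}1_{E},\ldots,\lambda_{l}1_{E})\neq 0$, and builds an evaluation $y_{j}\mapsto\lambda_{j}1_{E}+(\text{a sum of }Deg_{y_{j}}(u_{i})\text{ products of pairs of generators})$, $z_{r}\mapsto e_{r}$, so that the dominating part of the image of the leading term has strictly larger length-support than that of every other term. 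Your argument is shorter, avoids both Corollary \ref{cor1} and the dominating-part analysis, and makes explicit the structural fact (one element per multidegree) that underlies the count $c^{*}_{(n_1,n_2)}(SS0)$; its limitation, which you correctly anticipate, is that it is special to $SS0$, since for $SS$, $SS1$ and $SS3$ distinct commutator pairings produce several elements of the same multidegree and a leading-term argument of the paper's kind becomes unavoidable. One small point: your singleton claim rests on reading ``$fu\in Multifree(a_{1},\ldots,b_{m})$'' as ``$fu$ has exactly that multidegree''; this is the natural reading, and even if one allows representatives reduced via $y^{pq}\equiv y^{p}$ the set remains a singleton, so nothing is lost.
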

\begin{proof}
Let us suppose $f\in
pPol-SS0(a_{1},\ldots,a_{l},b_{1},\ldots,b_{m})$ such that $f =
\sum_{i = 1}^{n}\gamma_{i}f_{i}u_{i} = 0$ where $\gamma_{i} \neq 0$
for some $i \in \{1,\ldots,n\}$. Let us suppose, without loss of
generality, $u_{i}$ is the biggest term of $\{u_{1},\ldots,u_{n}\}$.

Due to Corollary \ref{cor1} there exist
$\lambda_{1},\ldots,\lambda_{l} \in F$ such that
$f_{i}(\lambda_{1}1_{E},\ldots,\lambda_{l}1_{E}) \neq 0$.

Let us consider now the following graded homomorphism:

\begin{center}
$\phi : F \langle y_{1},\ldots,y_{l},z_{1},\ldots,z_{m} \rangle
\rightarrow E$ \\
$z_{1} \mapsto e_{1}$ \\
$\ldots$ \\
$z_{m} \mapsto e_{m}$ \\
$y_{1} \mapsto \lambda_{1}1_{E} + e_{m+1}e_{m+2} + \ldots + e_{m +
2Deg_{y_{1}}(u_{i}) - 1}e_{m + 2Deg_{y_{1}}(u_{i})}$\\
$\ldots$ \\
$y_{l} \mapsto \lambda_{l}1_{E} + e_{m + 2(Deg_{y_{1}}(u_{i}) +
\ldots + Deg_{y_{l - 1}}(u_{i})) + 1}e_{m + 2(Deg_{y_{1}}(u_{i}) +
\ldots + Deg_{y_{l - 1}}(u_{i})) + 2} + \ldots + e_{m +
2(Deg_{y_{1}}(u_{i}) + \ldots + Deg_{y_{l}}(u_{i})) - 1}e_{m +
2(Deg_{y_{1}}(u_{i}) + \ldots + Deg_{y_{l}}(u_{i}))}$
\end{center}

Notice that $dom(\phi(\gamma_{i}f_{i}u_{i})) = \beta\gamma_{i}
e_{1}.\ldots.e_{m + 2(Deg_{y_{1}}(u_{i}) + \ldots +
Deg_{y_{l}}(u_{i}))}$ for some $\beta \in F - \{0\}$. Moreover, if
$j \neq i$, we have $wt(\phi(f_{j}u_{j})) < m + 2(Deg_{y_{1}}(u_{i})
+ \ldots + Deg_{y_{l}}(u_{i}))$. Hence
\begin{center}
$dom(\phi(f)) = \lambda\alpha_{i}e_{1}.\ldots.e_{m +
2(Deg_{y_{1}}(u_{i}) + \ldots + Deg_{y_{l}}(u_{i}))} \neq 0$,
\end{center}
which is a contradiction and we are done.
\end{proof}

\begin{corolario}\label{impcan}
The set $pPol-SS0(a_{1},\ldots,a_{l},b_{1},\ldots,b_{m})$ is a basis for $Multifree(a_{1},\ldots,a_{l},b_{1},\ldots,b_{m})$.
\end{corolario}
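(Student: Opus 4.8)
The plan is to read the corollary as a purely formal consequence of the two facts established immediately above, using that a \emph{basis} is by definition a linearly independent spanning set. So it suffices to assemble the spanning property and the linear independence property of $pPol-SS0(a_{1},\ldots,a_{l},b_{1},\ldots,b_{m})$, paying attention only to the requirement that both hold \emph{within} the fixed multidegree component $Multifree(a_{1},\ldots,a_{l},b_{1},\ldots,b_{m})$.

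For the spanning half I would quote Proposition \ref{oo}: any linear combination of elements of $Pr(X)$ is, modulo the graded identities of $E_{can}$, of the form $\sum_{i}f_{i}u_{i}$ with the $f_{i}$ monomial $p$-polynomials and the $u_{i}$ distinct elements of $SS0$. Specialising to a multihomogeneous polynomial of multidegree $(a_{1},\ldots,a_{l},b_{1},\ldots,b_{m})$ and keeping the resulting products $f_{i}u_{i}$ of that same multidegree, one obtains an expression as a combination of elements of $pPol-SS0(a_{1},\ldots,a_{l},b_{1},\ldots,b_{m})$; this is exactly the remark recorded after the definition of $pPol-SS0$, so this set generates $Multifree(a_{1},\ldots,a_{l},b_{1},\ldots,b_{m})$.

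Linear independence is the content of the immediately preceding proposition, whose proof I would take as given: from a nontrivial relation one selects the largest $u_{i}$, uses Corollary \ref{cor1} to choose scalars with $f_{i}(\lambda_{1}1_{E},\ldots,\lambda_{l}1_{E})\neq0$, and evaluates through a graded homomorphism into $E$ so that the term indexed by the largest $u_{i}$ contributes a basis element of strictly maximal length-support, forcing a nonzero dominating part and hence a contradiction. Granting this, $pPol-SS0(a_{1},\ldots,a_{l},b_{1},\ldots,b_{m})$ is simultaneously spanning and linearly independent, hence a basis.

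The one step genuinely deserving attention — the main, if modest, obstacle — is compatibility of the reduction with multidegrees, since Proposition \ref{oo} rewrites modulo the non-multihomogeneous identity $y_{1}^{pq}-y_{1}^{p}$. Under the standing bounds $1\leq a_{i}\leq pq$ and $1\leq b_{j}\leq p$ this is controlled: writing $a_{j}=pc_{j}+r_{j}$ with $0\leq r_{j}\leq p-1$, the $p$-polynomial constraint $Deg_{y_{i}}<pq$ forces $c_{j}\leq q-1$, so in any product $f_{i}u_{i}$ the even degree of $y_{j}$ splits unambiguously as a multiple of $p$ carried by $f_{i}$ plus a residue $\leq p-1$ carried by the $SS0$-factor $u_{i}$, while $b_{j}\leq p$ keeps each $z_{j}$ linear in $u_{i}$. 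For $a_{j}\leq pq-1$ no reduction in the $y_{j}$-degree occurs and the multidegree is preserved verbatim, whereas the extremal value $a_{j}=pq$ is absorbed through the identification $y_{j}^{pq}\equiv y_{j}^{p}$; in either case the spanning statement is legitimate inside $Multifree(a_{1},\ldots,a_{l},b_{1},\ldots,b_{m})$, and the corollary follows.
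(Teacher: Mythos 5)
Your proposal is correct and follows exactly the paper's (implicit) argument: spanning comes from Proposition \ref{oo} together with the remark following the definition of $pPol-SS0$, linear independence is the immediately preceding proposition, and the corollary is just the conjunction of the two. The extra paragraph on compatibility of the rewriting with the fixed multidegree is a reasonable sanity check but not a departure from the paper's route.
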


By Corollary \ref{impcan} and Lemma \ref{combssstar} we have the next.

\begin{teo*}
Let $n_1,n_2\in\N$, then \[c_{(n_1,n_2)}(E_{can},V)=c_{(n_1,n_2)}^*(SS0).\]
\end{teo*}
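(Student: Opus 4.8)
The plan is to exhibit an explicit basis of the quotient $V_{(n_1,n_2)}/(V_{(n_1,n_2)}\cap T_{2}(E_{can}))$ and then count it via Lemma \ref{combssstar}. Since $V$ is the space of multihomogeneous polynomials, $V_{(n_1,n_2)}$ splits along multidegrees, $V_{(n_1,n_2)}=\bigoplus_{\alpha}Multi(\alpha)$, the sum running over all $\alpha=(a_{1},\ldots,a_{l},b_{1},\ldots,b_{m})$ with $a_{1}+\cdots+a_{l}=n_{1}$ and $b_{1}+\cdots+b_{m}=n_{2}$. I would show that the union $\bigcup_{\alpha}pPol-SS0(\alpha)$ of the per-multidegree sets is simultaneously a spanning and a linearly independent set of the quotient, hence a basis. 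Its cardinality is exactly $c_{(n_1,n_2)}^{*}(SS0)$: each admissible product $fu$ is determined by a unique pair $(f,u)$ with $f=y_{1}^{ps_{1}}\cdots y_{l}^{ps_{l}}$ a monomial $p$-polynomial ($s_{i}\leq q-1$) and $u\in SS0$ of degrees $(n_{1}-p\sum s_{i},n_{2})$ (recover $s_{i},r_{i}$ from $a_{i}=ps_{i}+r_{i}$ by division by $p$), and organizing the count along the factor $f$ is precisely the summation in Lemma \ref{combssstar}.

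Spanning is the easy half. For each fixed $\alpha$, Corollary \ref{impcan} asserts that $pPol-SS0(\alpha)$ is a basis of $Multifree(\alpha)$, so every class there is a combination of these elements; summing over $\alpha$ and using the direct decomposition of $V_{(n_1,n_2)}$ shows that $\bigcup_{\alpha}pPol-SS0(\alpha)$ generates the quotient. Note that the regrouping $y_{i}^{a_{i}}=y_{i}^{ps_{i}}y_{i}^{r_{i}}$ with $r_{i}\leq p-1$ is purely formal, invokes no identity, and keeps us inside $Multi(\alpha)$, so no degree mixing intervenes here.

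The essential point, and the main obstacle, is linear independence \emph{across} the different multidegrees. A priori this is where the finite field could hurt us: $T_{2}(E_{can})$ is not multihomogeneous (it contains $y_{1}^{pq}-y_{1}^{p}$), so one cannot merely invoke that multihomogeneous components of an identity are again identities. I would therefore re-run the substitution argument of the previous Proposition on the full collection. Suppose $\sum\gamma_{f,u}\,fu\equiv0$ modulo $T_{2}(E_{can})$, the sum over all admissible pairs of total degree $(n_{1},n_{2})$. Grouping the terms by their $SS0$-factor, the coefficient $g_{u}:=\sum_{f}\gamma_{f,u}f$ of each distinct $u$ is again a $p$-polynomial, and since the distinct monomials $f$ are $F$-independent it suffices to prove that $\sum_{u}g_{u}u\equiv0$ forces every $g_{u}=0$. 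Choosing $u^{*}$ maximal in the order of $SS$ among those with $g_{u^{*}}\neq0$, Corollary \ref{cor1} supplies $\lambda_{1},\ldots,\lambda_{l}\in F$ with $g_{u^{*}}(\lambda_{1}1_{E},\ldots,\lambda_{l}1_{E})\neq0$; the homomorphism $\phi$ of the previous Proposition (sending $z_{j}\mapsto e_{j}$ and $y_{i}\mapsto\lambda_{i}1_{E}$ plus disjoint even products of length $2Deg_{y_{i}}(u^{*})$) then carries every $p$-polynomial factor to a scalar by Lemma \ref{id2}, while $Deg_{y}(u^{*})$ being maximal makes the image of $u^{*}$ carry strictly the largest length-support. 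Hence $dom(\phi(\sum_{u}g_{u}u))$ is a nonzero multiple of $e_{1}\cdots e_{n_{2}+2\sum_{i}Deg_{y_{i}}(u^{*})}$ arising solely from $u^{*}$, a contradiction; so each $g_{u}=0$ and all $\gamma_{f,u}=0$. This establishes independence (and, incidentally, that $V_{(n_1,n_2)}\cap T_{2}(E_{can})$ respects the multidegree splitting). Combining with the spanning half of the second paragraph, $\bigcup_{\alpha}pPol-SS0(\alpha)$ is a basis, and the count of the first paragraph yields $c_{(n_1,n_2)}(E_{can},V)=c_{(n_1,n_2)}^{*}(SS0)$.
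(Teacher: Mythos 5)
Your proposal is correct and follows the same route as the paper: the theorem is obtained by combining the per-multidegree bases $pPol$-$SS0(a_{1},\ldots,a_{l},b_{1},\ldots,b_{m})$ of Corollary \ref{impcan} with the count of Lemma \ref{combssstar}. Where you go beyond the paper's one-line derivation is in isolating and proving the linear independence of the union $\bigcup_{\alpha}pPol$-$SS0(\alpha)$ \emph{across} multidegrees $\alpha$ of fixed total degree $(n_{1},n_{2})$. This point is real and worth making explicit: since $T_{2}(E_{can})$ contains $y_{1}^{pq}-y_{1}^{p}$ it is not multihomogeneous, so the sum of the images of the $Multi(\alpha)$ in $F\langle X\rangle/T_{2}(E_{can})$ is not automatically direct, and without your step one only gets $c_{(n_1,n_2)}(E_{can},V)\leq c^{*}_{(n_1,n_2)}(SS0)$. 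Your argument for closing it — group by the $SS0$-factor, reduce to $\sum_{u}g_{u}u\equiv 0$ with $g_{u}$ $p$-polynomials, pick the maximal $u^{*}$, use Corollary \ref{cor1} and the substitution $\phi$, and observe via Lemma \ref{id2} that $p$-polynomials evaluate to scalars so that only $u^{*}$ can reach the top length-support — is exactly the mechanism of the paper's Proposition preceding Corollary \ref{impcan} (which, read carefully, already allows the $u_{i}$ to have distinct multidegrees), so the two proofs are in substance the same; yours is simply the honest global reading of that argument. Two small caveats, both inherited from the paper's conventions rather than errors of yours: the claim that the regrouping $y_{i}^{a_{i}}=y_{i}^{ps_{i}}y_{i}^{r_{i}}$ with $s_{i}\leq q-1$ ``invokes no identity'' requires $a_{i}<pq$, so your sum over ``all $\alpha$'' must be read with the paper's tacit bounds on the exponents (for larger $a_{i}$ one must use $y^{pq}=y^{p}$, which changes the total degree and would break the count); and the phrase ``$dom(\phi(\sum_{u}g_{u}u))$ arises solely from $u^{*}$'' should be weakened to ``the coefficient of the basis element $dom(\phi(u^{*}))$ is nonzero'', since terms $u$ with a different $z$-support may contribute other summands of the same length — the contradiction survives unchanged.
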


\subsection{$E_{\infty}$}

\

\begin{def.}
Let us denote by $pPol-SS(a_{1},\ldots,a_{l},b_{1},\ldots,b_{m})$
the set of polynomials of type $fu \in
Multifree(a_{1},\ldots,a_{l},b_{1},\ldots,b_{m})$, where \text{\rm
$f$ is a monomial $p$-polynomial with coefficient 1, $u_i\in
SS$.}\end{def.}

It is not difficult to see that if $fu,f'u' \in
Multifree(a_{1},\ldots,a_{l},b_{1},\ldots,b_{m})$ are such that $u =
u'$, then $f = f'$.

In the sequel we shall always assume
\[Yyn(LT(f)),Yyy(LT(f)),Zyn(LT(f)),Zyy(LT(f)),Zny(LT(f)),Yny(LT(f))\]
being non-empty. For the sake of convenience, we shall assume each
of the first four sets having at least two elements. We shall denote
them by:
\begin{description}
\item $Yyn(LT(f)) = \{y_{1},\ldots,y_{l_{1}}\}$,
\item $Yyy(LT(f)) = \{y_{n_{1} + 1},\ldots,y_{l_{1} +
l_{2}}\}$,\item $Yny(LT(f)) = \{y_{l_{1} + l_{2} +
1},\ldots,y_{l_{1} + l_{2} + l_{3}}\}$,\item $Zyn(LT(f)) =
\{z_{1},\ldots,z_{m_{1}}\}$,\item $Zyy(LT(f)) =
\{z_{m_{1}+1},\ldots,z_{m_{1}+m_{2}}\},$ \item $Zny(LT(f)) =
\{z_{m_{1}+m_{2}+1},\ldots,z_{m_{1}+m_{2}+m_{3}}\}$.
\end{description}

As a consequence of Proposition \ref{id3}, we have
$pPol-SS(a_{1},\ldots,a_{l},b_{1},\ldots,b_{m})$ is a generating set
for the vector space
$Multifree(a_{1},\ldots,a_{l},b_{1},\ldots,b_{m})$. We shall prove
$pPol-SS(a_{1},\ldots,a_{l},b_{1},\ldots,b_{m})$ is a linearly
independent set.

\begin{prop}
The set $pPol-SS(a_{1},\ldots,a_{l},b_{1},\ldots,b_{m})$ is linearly independent.
\end{prop}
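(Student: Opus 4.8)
The plan is to mimic the argument just given for $pPol\text{-}SS0$ in the case of $E_{can}$, while accounting for the new feature that an element $u_i\in SS$ may carry a nontrivial commutator part $\psi(u_i)\neq 1$, so that the evaluation must realize commutators by \emph{non-central} homogeneous elements of both $\Z_2$-degrees. Suppose for contradiction that $\sum_{i=1}^{n}\gamma_i f_i u_i=0$ in $Multifree(a_1,\ldots,a_l,b_1,\ldots,b_m)$ with some $\gamma_i\neq0$, and let $u_i=LT(f)$ be the leading term for the order on $SS$. Exactly as before, Corollary \ref{cor1} provides scalars $\lambda_1,\ldots,\lambda_l\in F$ with $f_i(\lambda_1 1_E,\ldots,\lambda_l 1_E)\neq0$, and the whole strategy is to construct a $\Z_2$-graded homomorphism $\phi\colon F\langle X\rangle\to E_\infty$ for which $dom(\phi(\gamma_i f_i u_i))$ is a nonzero scalar multiple of a single basis element of maximal length-support, while every remaining summand has strictly smaller $wt$ (or is annihilated); this forces $dom(\phi(f))\neq0$, the desired contradiction.

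The homomorphism is assembled role by role, using the partition of the variables of $LT(f)$ into the sets $Yyn,Yyy,Yny,Zyn,Zyy,Zny$ (assumed nonempty as above) and attaching to each role a fresh, pairwise disjoint block of generators of $E_\infty$. Each even variable $y$ is sent to $\lambda_j 1_E$ plus a nilpotent $0$-degree part: its $beg$-contribution is encoded by a sum of central pairs of odd-indexed generators $e_{2i+1}e_{2j+1}$, whose number equals $Deg_y(beg(u_i))$, and its $\psi$-contribution is encoded by a non-central $0$-degree element such as a single even-indexed generator $e_{2k}$, for which $[e_{2k},e_{2k'}]=2e_{2k}e_{2k'}\neq0$. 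Each odd variable $z$ is sent to a sum of distinct odd-indexed generators, their number equal to $Deg_z(beg(u_i))$, with commutators $[e_a,e_b]=2e_ae_b$ realizing its $\psi$-part. Because $f_i$ is a $p$-polynomial in the $y$'s, the exponent of each $y$ in $f_iu_i$ is congruent modulo $p$ to its $beg$-degree $r\le p-1$ in $u_i$; hence the surviving multinomial coefficient of the top term of the corresponding power is a product of consecutive integers reducing to $r!\not\equiv 0\pmod p$, and similarly the odd powers contribute factorials $<p!$. With all blocks disjoint, $\phi(u_i)$ is therefore a nonzero scalar times one basis element of $E_\infty$ of the largest possible length-support.

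The main obstacle is that, unlike in the $E_{can}$ case, terms other than $LT(f)$ of the same multidegree can survive with the \emph{same} length-support. These are precisely the \textit{bad terms} of Observation \ref{observacao}: such a $u_j$ shares every $Deg_x$ with $LT(f)$ but relocates some even variable between $beg$ and $\psi$, and since the image $\phi(y)$ is a fixed element, each variable draws from the same block of generators wherever it occurs; thus $u_j$ lands on the very same maximal-support basis element. Consequently $dom(\phi(f))$ is governed not by $\gamma_i$ alone but by $\gamma_i$ together with the contributions of all bad terms, and a cancellation must be excluded. I expect this to be the heart of the proof. I would resolve it by exploiting the genuine difference between how a variable enters a power in $beg$ and how it enters a commutator in $\psi$ — the latter forcing the non-central part of its image, a prescribed relative position, and a factor of $2$ per commutator occurrence — and by tracking $LBT(f)$, the greatest bad term. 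Concretely, I would show that for the $\phi$ above the coefficient of the top basis element attached to $LT(f)$ cannot be reproduced by the bad terms, using that the commutator realization fixes these signs and factors; and should a residual ambiguity persist, I would either choose the scalars $\lambda_j$ generically or iterate the argument on the strictly smaller, $LBT$-ordered set of surviving bad terms until the leading coefficient is manifestly nonzero.
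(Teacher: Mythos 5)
Your setup coincides with the paper's: assume a vanishing combination, take $u_i=LT(f)$, invoke Corollary \ref{cor1} for the scalars $\lambda_j$, and build a graded substitution $\phi$ into $E_\infty$ block by block according to the six sets $Yyn,Yyy,Yny,Zyn,Zyy,Zny$ of $LT(f)$, so that $dom(\phi(\gamma_if_iu_i))$ is a nonzero multiple of a single basis element of maximal length-support. Up to that point you are on the paper's track.

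The gap is in what you identify as ``the heart of the proof.'' First, your diagnosis is off: the terms you fear --- the bad terms of Observation \ref{observacao}, which relocate variables between $beg$ and $\psi$ while keeping every $Deg_x$ --- do \emph{not} ``land on the very same maximal-support basis element'' under the substitution actually used here. The construction sends every variable of $Yyn(LT(f))$ to $\lambda 1_E$ plus a sum of length-two (hence central) monomials, and every variable of $Zyn(LT(f))$ to a sum of length-two monomials; consequently any $u_j$ that moves such a variable into its commutator part is annihilated outright, and more generally every $u_j<LT(f)$ has $dom(\phi(f_ju_j))$ whose summands miss part of the target support. This is exactly the content of Lemma 7.1 of the reference by Fonseca, which the paper cites to finish; the $LBT(f)$/bad-term case analysis you propose to import belongs to the $E_k$ (i.e.\ $SS3$) proposition, where extra identities of $E_k$ force it, not to $E_\infty$. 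Second, and more seriously, your proposed resolution of the (mis-stated) obstruction is not a proof: ``show that the coefficient cannot be reproduced,'' ``choose the scalars $\lambda_j$ generically,'' and ``iterate the argument on the surviving bad terms'' are declarations of intent with no argument behind them --- in particular genericity of the $\lambda_j$ is irrelevant, since the $\lambda_j$ only affect the $p$-polynomial factor and cannot separate two terms with identical dominating supports. To close the proof you must verify, for the explicit $\phi$, that every $u_j<LT(f)$ (whether it differs in total degree, in $beg$ under the right-lexicographic order, or only in $\psi$) yields a dominating part whose support fails to contain the distinguished support; that verification, or the citation replacing it, is missing.
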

\begin{proof}
Set $l=l_1+l_2+l_3$ and $m=m_1+m_2+m_3$.
Let us suppose $f\in pPol-SS(a_{1},\ldots,a_{l},b_{1},\ldots,b_{m})$, such that $f = \sum_{i=1}^{n}\gamma_{i}f_{i}u_{i} =
0$, where $\gamma_{i} \neq 0$ for some $i \in \{1,\ldots,n\}$.
We shall denote by $LT(f)$ the biggest term of
$\{u_{1},\ldots,u_{n}\}$. We shall suppose, without loss of generality, $u_{i} = LT(f)$.

Due to Corollary \ref{cor1} there exist
$\lambda_{1},\ldots,\lambda_{l} \in F$ such that
$f_{i}(\lambda_{1}1_{E},\ldots,\lambda_{l}1_{E}) \neq 0$.

We shall denote by $\alpha_{1} = 4(Deg_{y_{1}}(LT(f)) + \ldots +
Deg_{y_{l_{1}}}(LT(f)))$, $\alpha_{2} = \alpha_{1} + 2(Deg_{y_{l_{1}
+ 1}}(LT(f)) + \ldots + Deg_{y_{l_{1} + l_{2}}}(LT(f))) + 2(l_{2} -
l_{1})$, $\alpha_{3} = \alpha_{2} + 2.l_{3}$, $\alpha_{4} =
\alpha_{3} + 2.(Deg_{z_{1}}(LT(f)) + \ldots +
Deg_{z_{m_{1}}}(LT(f)))$, $\alpha_{5} = 2.(Deg_{z_{1}}(LT(f)) +
\ldots + Deg_{z_{m_{1}}}(LT(f))) - 1$, $\alpha_{6} = \alpha_{5} + 2(
Deg_{z_{m_{1} + 1}}(LT(f)) + \ldots + Deg_{z_{m_{1} +
m_{2}}}(LT(f)))$, $\alpha_{7} = \alpha_{6} + 2.m_{3}$ e $\alpha_{8}
= \alpha_{3} + \alpha_{5} + 2.(Deg_{z_{m_{1} + 1}}(LT(f)) + \ldots +
Deg_{z_{m_{1} + m_{2}}}(LT(f))) + 2(2 - (m_{2} + 1)) - 1$.

Let us consider the following graded homomorphism $\phi: F\langle
y_{1},\ldots,y_{l_{1} + l_{2} + l_{3}}, z_{1}, \ldots, z_{m_{1} +
m_{2} + m_{3}} \rangle \rightarrow E$:

\begin{center}
$y_{1} \mapsto \lambda_{1}1_{E} + e_{2}e_{4} + \ldots +
e_{4.Deg_{y_{1}}(LT(f)) - 2}e_{4.Deg_{y_{1}}(LT(f))}$ \\

$\ldots$ \\

$y_{l_{1}} \mapsto \lambda_{l_{1}}1_{E} + e_{4(Deg_{y_{1}}(LT(f)) +
\ldots + Deg_{y_{l_{1} - 1}}(LT(f))) + 2}e_{4(Deg_{y_{1}}(LT(f)) +
\ldots + Deg_{y_{l_{1} - 1}}(LT(f))) + 4} + \ldots +
e_{4(Deg_{y_{1}}(LT(f)) + \ldots + Deg_{y_{l_{1}}}(LT(f))) -
2}e_{4(Deg_{y_{1}}(LT(f)) + \ldots +
Deg_{y_{l_{1}}}(LT(f)))}$ \\

$y_{l_{1} + 1} \mapsto \lambda_{l_{1} + 1}1_{E} +  e_{\alpha_{1} +
2} + e_{\alpha_{1} + 4}e_{\alpha_{1} + 6} + \ldots + e_{\alpha_{1} +
4. Deg_{y_{l_{1} +
1}}(LT(f))}e_{\alpha_{1} +  4.Deg_{y_{l_{1} + 1}}(LT(f)) + 2}$ \\

$\ldots$ \\

$y_{l_{1} + l_{2}} \mapsto \lambda_{l_{1} + l_{2}}1_{E} +
e_{\alpha_{1} + 4(Deg_{y_{l_{1} + 1}}(LT(f)) + \ldots +
Deg_{y_{l_{1} + l_{2} - 1}}(LT(f))) + 2(l_{2} - l_{1})} +
e_{\alpha_{1} + 4(Deg_{y_{l_{1} + 1}}(LT(f)) + \ldots +
Deg_{y_{l_{1} + l_{2} - 1}}(LT(f))) + 2(l_{2} - l_{1}) +
2}e_{\alpha_{1} + 4(Deg_{y_{l_{1} + 1}}(LT(f)) + \ldots +
Deg_{y_{l_{1} + l_{2} - 1}}(LT(f))) + 2(l_{2} - l_{1}) + 4} + \ldots
+ e_{\alpha_{1} + 4(Deg_{y_{l_{1} + 1}}(LT(f)) + \ldots +
Deg_{y_{l_{1} + l_{2}}}(LT(f))) + 2(l_{2} - l_{1} - 1)}e_{\alpha_{1}
+ 4(Deg_{y_{l_{1} + 1}}(LT(f)) + \ldots
+ Deg_{y_{l_{2} + l_{1}}}(LT(f))) + 2(l_{2} - l_{1})}$ \\

$y_{l_{1} + l_{2} + 1} \mapsto \lambda_{l_{1} + l_{2} + 1}1_{E} + e_{\alpha_{2} + 2}$ \\

$\ldots$ \\

$y_{l_{1} + l_{2} + l_{3}} \mapsto \lambda_{l_{1} + l_{2} + l_{3}}1_{E} + e_{\alpha_{2} + 2.l_{3}}$ \\

$z_{1} \mapsto e_{1}e_{\alpha_{3} + 2} + \ldots +
e_{2.(Deg_{z_{1}}(LT(f))) - 1}e_{\alpha_{3} + 2(Deg_{z_{1}}(LT(f)))}$\\

$\ldots$ \\

$z_{m_{1}} \mapsto e_{2.(Deg_{z_{1}}(LT(f)) + \ldots + Deg_{z_{m_{1}
- 1}}(LT(f))) + 1}e_{\alpha_{3} + 2.(Deg_{z_{1}}(LT(f)) + \ldots +
Deg_{z_{m_{1} - 1}}(LT(f))) + 2} + \ldots + e_{2.(Deg_{z_{1}}(LT(f))
+ \ldots + Deg_{z_{m_{1}}}(LT(f))) - 1}e_{\alpha_{3} +
2.(Deg_{z_{1}}(LT(f)) + \ldots +
Deg_{z_{m_{1}}}(LT(f)))}$ \\

$z_{m_{1} + 1} \mapsto e_{\alpha_{5} + 2} + e_{\alpha_{5} +
4}e_{\alpha_{3} + \alpha_{5} + 3} +
\ldots + e_{\alpha_{5} + 2Deg_{z_{m_{1} + 1}}(LT(f))}e_{\alpha_{3} + \alpha_{5} + 2Deg_{z_{m_{1} + 1}}(LT(f)) - 1}$ \\

$\ldots$ \\

$z_{m_{1} + m_{2}} \mapsto e_{\alpha_{5} +
2(Deg_{z_{m_{1}+1}}(LT(f)) + \ldots + Deg_{z_{m_{1} + m_{2} -
1}}(LT(f))) + 2} + e_{\alpha_{5} + 2( Deg_{z_{m_{1} + 1}}(LT(f)) +
\ldots + Deg_{z_{m_{1} + m_{2} - 1}}(LT(f))) + 4}e_{\alpha_{3} +
\alpha_{5} + 2.(Deg_{z_{m_{1} + 1}}(LT(f)) + \ldots + Deg_{z_{m_{1}
+ m_{2} - 1}}(LT(f))) + 2(2 - m_{2}) + 1} + \ldots + e_{\alpha_{5} +
2( Deg_{z_{m_{1} + 1}}(LT(f)) + \ldots + Deg_{z_{m_{1} +
m_{2}}}(LT(f)))}e_{\alpha_{3} + \alpha_{5} + 2.(Deg_{z_{m_{1} +
1}}(LT(f)) + \ldots + Deg_{z_{m_{1} +
m_{2}}}(LT(f))) + 2(2 - (m_{2} + 1)) - 1}$ \\

$z_{m_{1} + m_{2} + 1} \mapsto e_{\alpha_{6} + 2}$ \\

$\ldots$ \\

$z_{m_{1} + m_{2} + m_{3}} \mapsto e_{\alpha_{6} + 2.m_{3}}$
\end{center}

It is easy to see that $dom(\phi(\gamma_{i}f_{i}u_{i})) =
\beta\gamma_{i}e_{1}.\cdots.e_{\alpha_{7}}e_{2}.\cdots.e_{\alpha_{8}}
$ for some $\beta \in F$ different from 0.

We are done if we prove $dom(\phi(\gamma_{i}f_{i}u_{i})) =
dom(\phi(f))$ because, in this case, we have $f = 0$ which is an
absurd.

Let $u_{j} \in \{u_{1},\ldots,u_{n}\}$ such that $u_{j} < u_{i}$. By
Lemma 7.1 of \cite{Fonseca}, none of the summands of
$dom(\phi(f_{j}u_{j}))$ contains
$\{e_{1},\ldots,e_{\alpha_{7}},e_{2},\ldots,e_{\alpha_{8}}\}$ in its
support. So we have $dom(\phi(f)) = dom(\phi(\gamma_{i}f_{i}u_{i}))$
and we are done.
\end{proof}

\begin{corolario}\label{impinf}
The set $pPol-SS(a_{1},\ldots,a_{l},b_{1},\ldots,b_{m})$ is a basis for $Multifree(a_{1},\ldots,a_{l},b_{1},\ldots,b_{m})$.
\end{corolario}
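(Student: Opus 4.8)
The plan is to combine the two facts already in hand: that the set $pPol-SS(a_{1},\ldots,a_{l},b_{1},\ldots,b_{m})$ spans the vector space $Multifree(a_{1},\ldots,a_{l},b_{1},\ldots,b_{m})$, and that it is linearly independent. Since a linearly independent generating set is a basis, the corollary will follow with essentially no further work; the only content is to assemble the pieces correctly.

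First I would make the spanning statement precise. By Proposition \ref{id3}, every element $f$ of $Multifree(a_{1},\ldots,a_{l},b_{1},\ldots,b_{m})$ can be written, modulo $T_2(E)$, in the form $\sum_i f_i u_i$ with each $f_i$ a $p$-polynomial and each $u_i\in SS$. By the very definition of a $p$-polynomial, each $f_i$ is a linear combination $\sum_j \lambda_j m_j$ of monomials $m_j$ with $\psi(m_j)=1$ whose even-variable exponents are divisible by $p$ and bounded by $pq$; that is, each $m_j$ is itself a monomial $p$-polynomial. Expanding the products, $f$ becomes a linear combination of terms $m_j u_i$ with $m_j$ a monomial $p$-polynomial (coefficient $1$) and $u_i\in SS$, hence a linear combination of elements of $pPol-SS(a_{1},\ldots,a_{l},b_{1},\ldots,b_{m})$. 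This is exactly the generating statement already observed above as a consequence of Proposition \ref{id3}.

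The second ingredient is the linear independence of $pPol-SS(a_{1},\ldots,a_{l},b_{1},\ldots,b_{m})$, which is precisely the content of the immediately preceding Proposition (proved via the graded homomorphism $\phi$ and the dominating-part argument). Putting the two together, $pPol-SS(a_{1},\ldots,a_{l},b_{1},\ldots,b_{m})$ is a linearly independent generating set of $Multifree(a_{1},\ldots,a_{l},b_{1},\ldots,b_{m})$ and is therefore a basis, as claimed. Since both inputs are already established, there is no genuine obstacle here; the only point requiring a word of care is the passage from the general $p$-polynomials appearing in Proposition \ref{id3} to the \emph{monomial} $p$-polynomials used in the definition of $pPol-SS$, and this is immediate because a $p$-polynomial is by construction a linear combination of monomial $p$-polynomials.
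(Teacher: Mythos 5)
Your proposal is correct and follows essentially the same route as the paper: the corollary is obtained by combining the spanning statement derived from Proposition \ref{id3} (after expanding each $p$-polynomial into its constituent monomial $p$-polynomials) with the linear independence established in the immediately preceding proposition. The extra word of care you give about passing from general $p$-polynomials to monomial ones is exactly the point the paper leaves implicit, so nothing further is needed.
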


By Corollary \ref{impinf} and Lemma \ref{combssstar} we have the next.

\begin{teo*}
Let $n_1,n_2\in\N$, then \[c_{(n_1,n_2)}(E_{\infty},V)=c_{(n_1,n_2)}^*(SS).\]
\end{teo*}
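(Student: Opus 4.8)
The plan is to prove the statement $c_{(n_1,n_2)}(E_\infty,V)=c^*_{(n_1,n_2)}(SS)$ by exhibiting an explicit basis for the multihomogeneous relatively free algebra and counting it. Concretely, I would fix a multidegree $(a_1,\ldots,a_l,b_1,\ldots,b_m)$ with $\sum a_i=n_1$ and $\sum b_j=n_2$ (subject to the standing bounds $1\le a_i\le pq$, $1\le b_j\le p$) and work inside $Multifree(a_1,\ldots,a_l,b_1,\ldots,b_m)$. The key structural input is Corollary~\ref{impinf}, which states that $pPol\text{-}SS(a_1,\ldots,a_l,b_1,\ldots,b_m)$ is a basis for this space. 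Thus $\dim_F Multifree(a_1,\ldots,a_l,b_1,\ldots,b_m)$ equals the number of polynomials $fu$ with $f$ a monomial $p$-polynomial of coefficient $1$ and $u\in SS$, of the prescribed multidegree. Summing over all admissible multidegrees with total even-degree $n_1$ and total odd-degree $n_2$ then gives $c_{(n_1,n_2)}(E_\infty,V)$.

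The heart of the argument is to match this sum with the combinatorial expression $c^*_{(n_1,n_2)}(SS)$ defined in Lemma~\ref{combssstar}. First I would recall that by Proposition~\ref{id3} (specialized via Theorem~\ref{teorema 8.2} to $E_\infty$), every element of $Pr(X)$ reduces modulo $T_2(E_\infty)$ to a linear combination of products $f_iu_i$ with $f_i$ a $p$-polynomial and $u_i\in SS$ distinct; Corollary~\ref{impinf} upgrades this spanning statement to linear independence on each multihomogeneous component. The counting then proceeds by decomposing each admissible monomial $fu$: the $SS$-factor $u$ contributes a count governed by Lemma~\ref{combss}, while the monomial $p$-polynomial prefactor $f$ in the even variables absorbs the remaining even degree in multiples of $p$, each exponent lying in $\{0,p,2p,\ldots,(q-1)p\}$ because $y_i^{pq}\equiv y_i^p$. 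This is exactly the bookkeeping encoded by the constraint $s_i\le q-1$, $p\sum s_i\le n_1$ together with the shift $c_{(n_1-p\sum s_i,\,n_2)}(SS)$ in the definition of $c^*_{(n_1,n_2)}(SS)$.

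Thus the proof is essentially the composition of two cited results: the structural decomposition (Corollary~\ref{impinf}, giving a basis) and the enumerative identity (Lemma~\ref{combssstar}, organizing the count), with the role of this theorem being to assemble them. I would spell out that the factorization $fu$ is uniquely determined by the pair (the $p$-polynomial exponent vector, the $SS$-element), using the remark immediately preceding the $E_\infty$ proposition that $fu=f'u'$ with $u=u'$ forces $f=f'$, so no overcounting occurs when one sums the product of the $p$-polynomial count against $c_{(\cdot,n_2)}(SS)$ over all even-degree splittings.

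I expect the main obstacle to be purely notational rather than conceptual: verifying that the bounds on the $p$-polynomial exponents (divisibility by $p$ and the cutoff $<pq$ coming from Lemma~\ref{id2}) align precisely with the summation ranges $s_i\le q-1$ and $p\sum s_i\le n_1$ in Lemma~\ref{combssstar}, and that restricting to a monomial $p$-polynomial with coefficient $1$ (rather than a general $p$-polynomial) is the correct normalization so that each basis element is counted exactly once. Once the dictionary between the basis $pPol\text{-}SS$ and the summand $c^*_{(n_1,n_2)}(SS)$ is made explicit, the equality of dimensions is immediate.
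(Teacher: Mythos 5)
Your proposal matches the paper's argument exactly: the paper derives this theorem directly from Corollary~\ref{impinf} (the set $pPol$-$SS(a_{1},\ldots,a_{l},b_{1},\ldots,b_{m})$ is a basis for each multihomogeneous component) together with the count in Lemma~\ref{combssstar}, which is precisely the assembly you describe. Your additional care about the uniqueness of the factorization $fu$ and the alignment of exponent ranges is consistent with the remarks the paper makes just before the proposition on linear independence.
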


\subsection{$E_{k^{*}}$}

\

\begin{def.}
Let us denote by $pPol-SS1(a_{1},\ldots,a_{l},b_{1},\ldots,b_{m})$
the set of polynomials of type $fu \in
Multifree(a_{1},\ldots,a_{l},b_{1},\ldots,b_{m})$, where \text{\rm
$f$ is a monomial $p$-polynomial with coefficient 1, $u_i\in
SS1$.}\end{def.}

We observe that if $fu,f'u' \in
Multifree(a_{1},\ldots,a_{l},b_{1},\ldots,b_{m})$ are such that $u =
u'$, then $f = f'$.

In light of of Proposition \ref{proposicao 9.2}, we have
$pPol-SS1(a_{1},\ldots,a_{l},b_{1},\ldots,b_{m})$ is a generating set for the vector space
$Multifree(a_{1},\ldots,a_{l},b_{1},\ldots,b_{m})$. We shall prove $pPol-SS1(a_{1},\ldots,a_{l},b_{1},\ldots,b_{m})$ is a linearly independent set.

\begin{prop}
The set $pPol-SS1(a_{1},\ldots,a_{l},b_{1},\ldots,b_{m})$ is linearly independent.
\end{prop}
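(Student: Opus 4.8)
The plan is to mirror the argument just given for $E_{\infty}$, adapting the separating homomorphism to the grading $E_{k^{*}}$, in which the only generators of odd degree are $e_{1},\ldots,e_{k}$. Suppose, towards a contradiction, that $f=\sum_{i=1}^{n}\gamma_{i}f_{i}u_{i}=0$ is a nontrivial relation among distinct $u_{i}\in SS1$ with monomial $p$-polynomial coefficients $f_{i}$, and let $u_{i}=LT(f)$ be the leading term with respect to the total order on $SS$. By Corollary \ref{cor1} we may fix scalars $\lambda_{1},\ldots,\lambda_{l}\in F$ with $f_{i}(\lambda_{1}1_{E},\ldots,\lambda_{l}1_{E})\neq 0$; these will be the constant parts of the images of the even variables.

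Next I would build a graded homomorphism $\phi\colon F\langle X\rangle\to E_{k^{*}}$ that isolates $LT(f)$. Each even variable $y_{j}$ is sent to $\lambda_{j}1_{E}$ plus a tail consisting of products of fresh pairs of even generators (indices $>k$), enough pairs to realize the multiplicity of $y_{j}$ in $beg(LT(f))$ and its appearances inside the commutators, exactly as in the $E_{\infty}$ case but using only even generators so that $\phi(y_{j})$ has $\Z_2$-degree $0$. Each odd variable $z_{j}$ is sent to an odd element of $E_{k^{*}}$ of the form $\sum e_{a}e_{c}$ with $a\leq k$ and $c>k$, where the $e_{a}$ are chosen among $e_{1},\ldots,e_{k}$ and all generators used are distinct. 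The decisive point is that, since $u_{i}\in SS1$, we have $deg_{Z}(LT(f))\leq k$, so there are just enough odd generators $e_{1},\ldots,e_{k}$ to assign a distinct one to each odd slot of $LT(f)$ (those counted by $Zyn(LT(f))$, $Zyy(LT(f))$ and $Zny(LT(f))$). This guarantees that the relevant powers $z_{j}^{b_{j}}$ (with $b_{j}<p$, so $b_{j}!\neq 0$ in $F$) and the odd commutators survive under $\phi$ and carry pairwise distinct supports.

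I would then check that $dom(\phi(\gamma_{i}f_{i}u_{i}))$ is a nonzero scalar multiple of a single basis monomial of maximal length-support, whose support contains all of $e_{1},\ldots,e_{k}$ together with the prescribed even generators. Finally, for every $u_{j}<u_{i}$, Lemma 7.1 of \cite{Fonseca} shows that no summand of $dom(\phi(f_{j}u_{j}))$ can contain this monomial's support: either $wt(\phi(f_{j}u_{j}))$ is strictly smaller, or the support differs. Hence the maximal-weight component of $\phi(f)$ equals $dom(\phi(\gamma_{i}f_{i}u_{i}))\neq 0$, contradicting $f=0$, and the set is linearly independent.

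The main obstacle is the explicit bookkeeping of $\phi$: one must distribute the at most $k$ odd generators $e_{1},\ldots,e_{k}$ across the sets $Zyn(LT(f))$, $Zyy(LT(f))$, $Zny(LT(f))$ and interleave them with the even tails assigned to $Yyn(LT(f))$, $Yyy(LT(f))$, $Yny(LT(f))$ so that (i) every image is genuinely homogeneous in $E_{k^{*}}$, (ii) the dominating monomial of $\phi(LT(f))$ is realized exactly once and with nonzero coefficient, and (iii) lower terms cannot reproduce that monomial. Constraint (i) is precisely what forces the use of the $SS1$ hypothesis $deg_{Z}\leq k$, while (iii) is the order-theoretic heart of the argument and is where Fonseca's Lemma 7.1 does the work.
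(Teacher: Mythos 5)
Your proposal follows essentially the same route as the paper's proof: the same separating graded homomorphism organized by the decomposition into $Yyn$, $Yyy$, $Yny$, $Zyn$, $Zyy$, $Zny$, the same use of Corollary \ref{cor1} to fix the scalar parts of the even images, the same appeal to Lemma 7.1 of \cite{Fonseca} to show no lower term can reproduce the support of $dom(\phi(\gamma_i f_i u_i))$, and the same use of the $SS1$ bound $deg_{Z}\leq k$ to fit all odd slots into $e_{1},\ldots,e_{k}$. The one detail to correct is the form of $\phi(z_j)$ for odd variables occurring in $\psi(LT(f))$: two length-two products $e_{a}e_{c}$ commute in $E$, so sending every odd variable to a sum of such products would annihilate the commutators; as in the paper's explicit assignment, the variables in $Zyy(LT(f))$ and $Zny(LT(f))$ must also receive a lone odd generator $e_{a}$ (with $a\leq k$) as a summand.
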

\begin{proof}
Set $l=l_1+l_2+l_3$ and $m=m_1+m_2+m_3$.
Let us suppose $f\in pPol-SS1(a_{1},\ldots,a_{l},b_{1},\ldots,b_{m})$ such that $f = \sum_{i=1}^{n}\gamma_{i}f_{i}u_{i} =
0$, where $\gamma_{i} \neq 0$ for some $i \in \{1,\ldots,n\}$.
We shall denote by $LT(f)$ the biggest term of $\{u_{1},\ldots,u_{n}\}$. We shall suppose, without loss of generality, $u_{i} = LT(f)$.

Due to Corollary \ref{cor1} there exist
$\lambda_{1},\ldots,\lambda_{l} \in F$ such that
$f_{i}(\lambda_{1}1_{E},\ldots,\lambda_{l}1_{E}) \neq 0$.

We shall denote by $\alpha = k + 2(Deg_{y_{1}}(LT(f)) + \ldots +
Deg_{y_{l_{1}}}(LT(f))), \alpha_{1} = \alpha + 2(Deg_{y_{l_{1} +
1}}(LT(f)) + \ldots + Deg_{y_{l_{1} + l_{2}}}(LT(f))), \alpha_{2} =
\alpha_{1} + l_{3}, \alpha_{3} =  Deg_{z_{1}}(LT(f)) + \ldots +
Deg_{z_{m_{1}}}(LT(f))$.

Let us consider the following graded homomorphism $\phi: F\langle
y_{1},\ldots,y_{l_{1} + l_{2} + l_{3}}, z_{1}, \ldots, z_{m_{1} +
m_{2} + m_{3}} \rangle \rightarrow E$:

\begin{center}
$\phi: F\langle y_{1},\ldots,y_{l_{1}+l_{2}+l_{3}},
z_{1},\ldots,z_{m_{1}+m_{2}+m_{3}} \rangle \rightarrow E$ \\
$y_{1} \mapsto \lambda_{1}1_{E} + e_{k+1}e_{k+2} + \ldots + e_{k +
2(Deg_{y_{1}}(LT(f))) - 1}e_{k + 2(Deg_{y_{1}}(LT(f)))}$\\
$\ldots$ \\
$y_{l_{1}} \mapsto \lambda_{l_{1}}1_{E} + e_{k +
2(Deg_{y_{1}}(LT(f)) + \ldots + Deg_{y_{l_{1} - 1}}(LT(f))) + 1}e_{k
+ 2(Deg_{y_{1}}(LT(f)) + \ldots + Deg_{y_{l_{1} - 1}}(LT(f))) + 2} +
\ldots + e_{k + 2(Deg_{y_{1}}(LT(f)) + \ldots +
Deg_{y_{l_{1}}}(LT(f))) - 1}e_{k + 2(Deg_{y_{1}}(LT(f)) + \ldots +
Deg_{y_{l_{1}}}(LT(f)))}$ \\
$y_{l_{1} + 1} \mapsto \lambda_{l_{1} + 1}1_{E} + e_{\alpha + 1} +
e_{\alpha + 2}e_{\alpha + 3}
+ \ldots + e_{\alpha + 2Deg_{y_{l_{1} + 1}}(LT(f))}e_{\alpha + 2Deg_{y_{l_{1} + 1}}(LT(f)) + 1}$ \\
$\ldots$ \\
$y_{l_{1} + l_{2}} \mapsto \lambda_{l_{1} + l_{2}}1_{E} + e_{\alpha
+ 2(Deg_{y_{l_{1} + 1}}(LT(f)) + \ldots + Deg_{y_{l_{1} + l_{2} -
1}}(LT(f))) + l_{2}} + e_{\alpha + 2(Deg_{y_{l_{1}+1}}(LT(f)) +
\ldots + Deg_{y_{l_{1} + l_{2} - 1}}(LT(f))) + l_{2} + 1}e_{\alpha +
2(Deg_{y_{l_{1}+1}}(LT(f)) + \ldots + Deg_{y_{l_{1} + l_{2} -
1}}(LT(f))) + l_{2} + 2} + \ldots + e_{\alpha + 2(Deg_{y_{l_{1} +
1}}(LT(f)) + \ldots + Deg_{y_{l_{1} + l_{2}}}(LT(f))) + l_{2} -
1}e_{\alpha + 2(Deg_{y_{l_{1} + 1}}(LT(f)) + \ldots
+ Deg_{y_{l_{1} + l_{2}}}(LT(f))) + l_{2}} $\\
$\ldots$\\
$y_{l_{1} + l_{2} + l_{3}} \mapsto \lambda_{l_{1} + l_{2} + l_{3}}1_{E} + e_{\alpha_{1} + l_{3}}$\\
$z_{1} \mapsto e_{1}e_{\alpha_{2}} + \ldots +
e_{Deg_{z_{1}}(LT(f))}e_{\alpha_{2} + Deg_{z_{1}}(LT(f))}$\\
$\ldots$ \\
$z_{m_{1}} \mapsto e_{Deg_{z_{1}}(LT(f)) + \ldots + Deg_{z_{m_{1} -
1}}(LT(f)) + 1 }e_{\alpha_{2} + Deg_{z_{1}}(LT(f)) + \ldots +
Deg_{z_{m_{1} - 1}}(LT(f)) + 1} + \ldots + e_{Deg_{z_{1}}(LT(f)) +
\ldots + Deg_{z_{m_{1}}}(LT(f))}e_{\alpha_{2} + Deg_{z_{1}}(LT(f)) +
\ldots + Deg_{z_{m_{1}}}(LT(f))}$\\
$z_{m_{1} + 1} \mapsto e_{\alpha_{3} + 1} + e_{\alpha_{3} +
2}e_{\alpha_{2} + \alpha_{3} + 1} + \ldots + e_{\alpha_{3} +
Deg_{z_{m_{1} + 1}}(LT(f))}e_{\alpha_{2} + \alpha_{3} +
Deg_{z_{m_{1}
+ 1}}(LT(f)) - 1}$ \\
$\ldots$ \\
$z_{m_{1} + m_{2}} \mapsto e_{\alpha_{3} + Deg_{z_{m_{1} +
1}}(LT(f)) + \ldots + Deg_{z_{m_{1} + m_{2} - 1}}(LT(f)) + 1} +
e_{\alpha_{3} + Deg_{z_{m_{1} + 1}}(LT(f)) + \ldots + Deg_{z_{m_{1}
+ m_{2} - 1}}(LT(f)) + 2}e_{\alpha_{2} + \alpha_{3} + Deg_{z_{m_{1}
+ 1}}(LT(f)) + \ldots + Deg_{z_{m_{1} + m_{2} - 1}}(LT(f)) - m_{2} +
2} + \ldots + e_{\alpha_{3} + Deg_{z_{m_{1} + 1}}(LT(f)) + \ldots +
Deg_{z_{m_{1} + m_{2}}}(LT(f))}e_{\alpha_{2} + \alpha_{3} +
Deg_{z_{m_{1} + 1}}(LT(f))
+ \ldots + Deg_{z_{m_{1} + m_{2}}}(LT(f)) - m_{2}}$ \\
$z_{m_{1} + m_{2} + 1} \mapsto e_{\alpha_{3} + Deg_{z_{m_{1} +
1}}(LT(f)) + \ldots + Deg_{z_{m_{1} + m_{2}}}(LT(f)) + 1}$ \\
$\ldots$ \\
$z_{m_{1} + m_{2} + m_{3}} \mapsto e_{\alpha_{3} + Deg_{z_{m_{1} +
1}}(LT(f)) + \ldots + Deg_{z_{m_{1} + m_{2}}}(LT(f)) + m_{3}}$
\end{center}

It is not difficult to see that
\[dom(\phi(\gamma_{i}f_{i}u_{i}))\]\[ = \beta\gamma_{i}e_{1}\cdots
e_{\alpha_{3} + Deg_{z_{m_{1} + 1}}(LT(f)) + \ldots + Deg_{z_{m_{1}
+ m_{2}}}(LT(f)) + m_{3}}e_{k+1}\cdots e_{\alpha_{2} + \alpha_{3} +
Deg_{z_{m_{1} + 1}}(LT(f)) + \ldots + Deg_{z_{m_{1} + m_{2}}}(LT(f))
- m_{2}}\] for some non-zero $\beta \in F$.

Notice that we are done if we prove $dom(\phi(\gamma_{i}f_{i}u_{i}))
= dom(\phi(f)) = 0$.

Let us consider $u_{j} \in \{u_{1},\ldots,u_{n}\}$. By Lemma 7.1 of
\cite{Fonseca}, none of the summands of
$dom(\phi(\gamma_{j}f_{j}u_{j}))$ contains
$supp(dom(\phi(\gamma_{i}f_{i}u_{i})))$, then
$dom(\phi(\gamma_{i}f_{i}u_{i})) = dom(\phi(f))$ and we are done.
\end{proof}

\begin{corolario}\label{impkstar}
The set $pPol-SS1(a_{1},\ldots,a_{l},b_{1},\ldots,b_{m})$ is a basis for $Multifree(a_{1},\ldots,a_{l},b_{1},\ldots,b_{m})$.
\end{corolario}

By Corollary \ref{impkstar} and Lemma \ref{combssstar} we have the next.

\begin{teo*}
Let $n_1,n_2,k\in\N$, then \[c_{(n_1,n_2)}(E_{k^*},V)=c_{(n_1,n_2)}^*(SS1).\]
\end{teo*}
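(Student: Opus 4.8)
The plan is to follow verbatim the scheme already used for $E_{can}$ and $E_{\infty}$: reduce the computation of $c_{(n_1,n_2)}(E_{k^*},V)$ to a multidegree-by-multidegree count governed by Corollary \ref{impkstar}, and then reorganize that count into the shape prescribed by Lemma \ref{combssstar} for $j=1$.

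First I would decompose the homogeneous space $V_{(n_1,n_2)}$ as the direct sum of its multihomogeneous components $Multi(a_1,\ldots,a_l,b_1,\ldots,b_m)$ over the multidegrees with $\sum_i a_i=n_1$ and $\sum_j b_j=n_2$, restricting the exponents to the admissible range $1\le a_i\le pq$, $1\le b_j\le p$ fixed earlier (exponents outside this range being reduced by means of Lemma \ref{id2}). Passing to the quotient by $T_2(E_{k^*})$, I would then argue that $\dim_F V_{(n_1,n_2)}/(V_{(n_1,n_2)}\cap T_2(E_{k^*}))$ equals the sum of the $\dim_F Multifree(a_1,\ldots,a_l,b_1,\ldots,b_m)$ over the same admissible multidegrees. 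By Corollary \ref{impkstar} each summand has dimension equal to the cardinality of $pPol\text{-}SS1(a_1,\ldots,a_l,b_1,\ldots,b_m)$, that is, the number of products $fu$ of that multidegree with $f$ a monomial $p$-polynomial and $u\in SS1$.

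Next I would collect these products across all admissible multidegrees of total degree $(n_1,n_2)$. Every such $fu$ has $f=y_1^{ps_1}\cdots y_l^{ps_l}$ with $0\le s_i\le q-1$ (the bound coming from $y_i^{pq}-y_i^p\in T_2(E)$) and $u\in SS1$; since $f$ carries no odd variables, $\deg_Z(fu)=\deg_Z(u)=n_2$ and $\deg_Y(fu)=p\sum_i s_i+\deg_Y(u)=n_1$. Organizing the count first by the exponent vector $(s_1,\ldots,s_l)$ of the $p$-polynomial factor, which contributes $p\sum_i s_i$ to the even degree, and then by the factor $u\in SS1$, which must have even degree $n_1-p\sum_i s_i$ and odd degree $n_2$, gives exactly
\[
c_{(n_1,n_2)}(E_{k^*},V)=\sum_{\substack{s_i\le q-1\\ p\sum s_i\le n_1}}c_{(n_1-p\sum s_i,n_2)}(SS1)=c_{(n_1,n_2)}^*(SS1),
\]
which is the formula of Lemma \ref{combssstar} for $SS1$.

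The step I expect to be the main obstacle is the second sentence of the first paragraph, namely the claim that the codimension splits as the sum of the multihomogeneous codimensions, equivalently that the subspaces $Multifree(a_1,\ldots,a_l,b_1,\ldots,b_m)$ sit in direct sum inside the relatively free algebra. Although $T_2(E_{k^*})$ is not multihomogeneous over a finite field, the point is that the normal forms furnished by Proposition \ref{proposicao 9.2} each carry a well-defined multidegree once their exponents have been reduced to the admissible range, so distinct admissible multidegrees yield disjoint families of normal-form basis elements; the relations $y_i^{pq}-y_i^p$ and $z_1^p$ link only different total degrees (or admissible to non-admissible multidegrees), never two distinct admissible multidegrees of the same total degree. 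Hence the per-multidegree bases of Corollary \ref{impkstar} assemble into a basis of the image of $V_{(n_1,n_2)}$ with no collisions and the dimensions add, after which the reindexing in the third paragraph is purely bookkeeping.
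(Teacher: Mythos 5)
Your proposal is correct and follows essentially the same route as the paper: the theorem there is stated as an immediate consequence of Corollary \ref{impkstar} (the basis $pPol$-$SS1$ of each $Multifree(a_1,\ldots,a_l,b_1,\ldots,b_m)$) combined with the count of Lemma \ref{combssstar}, which is precisely the multidegree-by-multidegree decomposition and reindexing you carry out. Your extra paragraph justifying that the admissible multihomogeneous components sit in direct sum in the quotient is a detail the paper leaves implicit, but it is the intended reading rather than a different argument.
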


\subsection{$E_{k}, k \geq 1$}

\

\begin{def.}
Let us denote by $pPol-SS3(a_{1},\ldots,a_{l},b_{1},\ldots,b_{m})$
the set of polynomials of type $fu \in
Multifree(a_{1},\ldots,a_{l},b_{1},\ldots,b_{m})$, where \text{\rm
$f$ is a monomial $p$-polynomial with coefficient 1, $u_i\in
SS3$.}\end{def.}

As above, if $fu,f'u' \in
Multifree(a_{1},\ldots,a_{l},b_{1},\ldots,b_{m})$ are such that $u =
u'$, then $f = f'$.

Due to Proposition \ref{proposicao 10.16}, we have
$pPol-SS3(a_{1},\ldots,a_{l},b_{1},\ldots,b_{m})$ is a generating set for the vector space
$Multifree(a_{1},\ldots,a_{l},b_{1},\ldots,b_{m})$. In what follows we shall prove $pPol-SS3(a_{1},\ldots,a_{l},b_{1},\ldots,b_{m})$ is a linearly independent set.

In what follows we use an idea of proof used by Fonseca in
\cite{Fonseca} (see Theorem 10.17 of \cite{Fonseca}).

\begin{prop}
The set $pPol-SS3(a_{1},\ldots,a_{l},b_{1},\ldots,b_{m})$ is linearly independent.
\end{prop}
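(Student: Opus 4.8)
The plan is to mirror the linear-independence arguments already carried out for $E_{can}$, $E_\infty$, and $E_{k^*}$, since the proofs for those cases all follow the same template: assume a nontrivial relation $f=\sum_{i=1}^n\gamma_i f_iu_i=0$ with the $u_i\in SS3$ distinct and the $f_i$ monomial $p$-polynomials, single out the term whose $SS3$-component $u_i=LT(f)$ is largest in the total order, and exhibit an explicit graded evaluation $\phi:F\langle X\rangle\to E$ under which the dominating part $dom(\phi(\gamma_i f_i u_i))$ survives while every other $dom(\phi(\gamma_j f_j u_j))$ with $u_j<u_i$ has strictly smaller length-support. First I would invoke Corollary \ref{cor1} to pick $\lambda_1,\dots,\lambda_l\in F$ with $f_i(\lambda_1 1_E,\dots,\lambda_l 1_E)\neq 0$, so that the $p$-polynomial factor does not vanish on scalars; this is what lets us decouple the $p$-polynomial from the $SS3$-word.

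The core construction is the substitution $\phi$. As in the $E_\infty$ and $E_{k^*}$ cases, I would partition the variables of $LT(f)$ according to the sets $Yyn,Yyy,Yny,Zyn,Zyy,Zny$ and assign to each variable an image of the form $\lambda 1_E$ plus a carefully chosen product of distinct generators $e_j$, arranged so that variables occurring only in $beg$ contribute one "commuting-pair" block, variables in the commutator part $\psi$ contribute a single generator enabling a nonzero Lie bracket, and variables in both contribute a hybrid image. The generator indices must be allocated along disjoint ranges (governed by a sequence of cumulative offsets $\alpha_1,\alpha_2,\dots$ as in the previous proofs) so that the support of $dom(\phi(f_iu_i))$ is a single basis monomial $e_1e_2\cdots$ of maximal length. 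The key structural input specific to $SS3$ is the constraint defining Type-$3$: when $deg_Z(beg(u_i))+deg_Y(\psi(u_i))=k+1$ one has $Deg_{pr(z)(u_i)}\psi(u_i)=0$. I would use exactly this condition to guarantee that the evaluation of $LT(f)$ does not collapse to zero inside $E_k$, where the identities (1)--(6) of Theorem \ref{teorema 10.17} impose vanishing on long commutator products mixing the fixed-degree generators; the Type-$3$ restriction is precisely what keeps $LT(f)$ outside the ideal generated by those relations.

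With $\phi$ in hand, the conclusion follows from the domination argument used verbatim in the earlier propositions: I would cite Lemma 7.1 of \cite{Fonseca} to assert that for every $u_j<LT(f)$, none of the summands of $dom(\phi(\gamma_j f_j u_j))$ contains the full support of $dom(\phi(\gamma_i f_iu_i))$, so in the sum these lower terms cannot cancel the surviving top term. Hence $dom(\phi(f))=dom(\phi(\gamma_i f_iu_i))\neq 0$, contradicting $f=0$, and the set is linearly independent.

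The step I expect to be the main obstacle is verifying that the explicit assignment $\phi$ respects the $SS3$-specific boundary case $deg_Z(beg)+deg_Y(\psi)=k+1$ together with $Deg_{pr(z)}\psi=0$, and that the offsets $\alpha_j$ can be chosen so that the $Zyy$ and $Yyy$ blocks — whose images are sums of a bare generator and a generator-pair, as in the $E_{k^*}$ proof — still yield a single dominating monomial of the expected maximal length without index overlap. In other words, the bookkeeping of generator indices under the extra Type-$3$ constraint, rather than any new conceptual difficulty, is where the real work lies; I would handle it by reusing the offset scheme of the $E_{k^*}$ case and checking the one additional Type-$3$ inequality term by term.
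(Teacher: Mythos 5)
Your proposal reproduces the template of the $E_{can}$, $E_{\infty}$ and $E_{k^*}$ propositions, but it misses the one genuinely new obstacle in the $E_k$ case: the possible presence of \emph{bad terms}. In $E_k$ only the generators $e_1,\ldots,e_k$ have degree $0$, so two distinct elements of $SS3$ with the same multidegree, differing only in that one occurrence of $pr(z)(LT(f))$ has migrated between $beg$ and $\psi$ (compensated by extra occurrences of even variables in $beg$), can evaluate under the natural substitution to elements whose dominating parts have \emph{the same} support as $dom(\phi(LT(f)))$. This is exactly the situation encoded by the definition of a bad term and by Observation \ref{observacao}, and it is why the paper's proof splits into three cases: $deg_{Z}(beg(LT(f)))+deg_{Y}(\psi(LT(f)))\leq k$; equal to $k+1$ with no bad term present; and equal to $k+1$ with a bad term of nonzero coefficient. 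In the first two cases the paper uses Lemmas 7.1 and 7.2 of \cite{Fonseca} respectively, and in the third case the argument cannot be run on $LT(f)$ at all: the homomorphism is instead built from the degrees of the \emph{largest bad term} $LBT(f)$, and the contradiction comes from Lemma 7.3 of \cite{Fonseca} applied to $dom(\phi(\gamma_k f_k u_k))$ with $u_k=LBT(f)$.

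Concretely, your plan to ``cite Lemma 7.1 of \cite{Fonseca} verbatim'' and conclude $dom(\phi(f))=dom(\phi(\gamma_i f_i u_i))$ for $u_i=LT(f)$ fails whenever a bad term occurs with nonzero coefficient, because its image under $\phi$ can cancel the image of the leading term. The Type-$3$ condition you emphasize (that $Deg_{pr(z)(u_i)}\psi(u_i)=0$ on the boundary $deg_Z(beg(u_i))+deg_Y(\psi(u_i))=k+1$) does not exclude this interference; it only constrains each individual basis element and is compatible with the coexistence of $LT(f)$ and a bad term in the same linear combination. Your proof needs the case analysis and the switch from $LT(f)$ to $LBT(f)$ to be complete.
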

\begin{proof}
Set $l=l_1+l_2+l_3$ and $m=m_1+m_2+m_3$. Suppose that there exists
$f\in pPol-SS3(a_{1},\ldots,a_{l},b_{1},\ldots,b_{m})$ such that $f
= \sum_{i = 1}^{n}\gamma_{i}f_{i}u_{i} = 0$, with $\lambda_{i} \neq
0$ for some $i \in \{1,\ldots,n\}$. Moreover, we shall suppose,
without loss of generality, $u_{i} = LT(f)$. We shall denote by
$\alpha_{1} = k + 2(Deg_{y_{1}}(LT(f)) + \ldots +
Deg_{y_{l_{1}}}(LT(f))), \alpha_{2} = \alpha_{1} + 2(Deg_{y_{l_{1} +
1}}(LT(f)) + \ldots + Deg_{y_{l_{1} + l_{2} - 1}}(LT(f))) - 2l_{2},
\alpha_{3} = \alpha_{2} + Deg_{z_{1}}(LT(f)) + \ldots +
Deg_{z_{m_{1}}}(LT(f)), \alpha_{4} = l_{2} + l_{3} +
Deg_{z_{1}}(LT(f)) + \ldots + Deg_{z_{m_{1}}}(LT(f)), \alpha_{5} =
\alpha_{3} + Deg_{z_{m_{1}+1}}(LT(f)) + \ldots + Deg_{z_{m_{1} +
m_{2}}}(LT(f)) + m_{3}, \alpha_{6} = \alpha_{4} +
Deg_{z_{m_{1}+1}}(LT(f)) + \ldots + Deg_{z_{m_{1} + m_{2}}}(LT(f)) -
m_{2}$.

Because $f_{i}(y_{1},\ldots,y_{l})$ is a non-zero $p$-polynomial,
due to Corollary \ref{cor1} there exist scalars
$\lambda_{1},\ldots,\lambda_{l} \in F$ such that
$f_{i}(\lambda_{1}1_{E},\ldots,\lambda_{l}1_{E}) \neq 0$.

We have to consider three cases.

\begin{description}
\item Case 1: $deg_{Z}(beg(LT(f))) + deg_{Y}(\psi(LT(f))) \leq k$;
\item Case 2: $deg_{Z}(beg(LT(f))) + deg_{Y}(\psi(LT(f))) = k + 1$ and none of the elements of $\{u_{1},\ldots,u_{n}\}$ is a bad term;
\item Case 3: $deg_{Z}(beg(LT(f))) + deg_{Y}(\psi(LT(f))) = k + 1$ and there exists a term $u_{j} \in \{u_{1},\ldots,u_{n}\}$ such that $\lambda_{j} \neq
0$ and $u_{j}$ is a bad term.
\end{description}

Let us study Case 1 first. Let us consider the following graded homomorphism:

\begin{center}
$\phi: F\langle y_{1},\ldots,y_{l_{1}+l_{2}+l_{3}},z_{1},\ldots,z_{m_{1}+m_{2}+m_{3}}\rangle \rightarrow E$\\

$y_{1} \mapsto \lambda_{1}1_{E} + e_{k+1}e_{k+2} + \ldots + e_{k +
2Deg_{y_{1}}(LT(f))
- 1}e_{k + 2Deg_{y_{1}}(LT(f))}$ \\
$\ldots$ \\
$y_{l_{1}} \mapsto \lambda_{l_{1}}1_{E} + e_{k +
2(Deg_{y_{1}}(LT(f)) + \ldots + Deg_{y_{l_{1} - 1}}(LT(f))) + 1}e_{k
+ 2(Deg_{y_{1}}(LT(f)) + \ldots + Deg_{y_{l_{1} - 1}}(LT(f))) + 2} +
\ldots + e_{k + 2(Deg_{y_{1}}(LT(f)) + \ldots +
Deg_{y_{l_{1}}}(LT(f))) - 1}e_{k +
2(Deg_{y_{1}}(LT(f)) + \ldots + Deg_{y_{l_{1}}}(LT(f)))}$ \\
$y_{l_{1} + 1} \mapsto \lambda_{l_{1} + 1}1_{E} + e_{1} +
e_{\alpha_{1} + 1}e_{\alpha_{1} + 2} + \ldots + e_{\alpha_{1} +
2Deg_{y_{l_{1} + 1}}(LT(f)) - 3}e_{\alpha_{1} +
2Deg_{y_{l_{1} + 1}}(LT(f)) - 2}$ \\
$\ldots$ \\
$y_{l_{1} + l_{2}} \mapsto \lambda_{l_{1} + l_{2}}1_{E} + e_{l_{2}}
+ e_{\alpha_{1} + 2(Deg_{y_{l_{1} + 1}}(LT(f)) + \ldots +
Deg_{y_{l_{1} + l_{2} - 1}}(LT(f))) + 2(1 - l_{2}) + 1}e_{\alpha_{1}
+ 2(Deg_{y_{l_{1} + 1}}(LT(f)) + \ldots + Deg_{y_{l_{1} + l_{2} -
1}}(LT(f))) + 2(1 - l_{2}) + 2} + \ldots + e_{\alpha_{1} +
2(Deg_{y_{l_{1} + 1}}(LT(f)) + \ldots + Deg_{y_{l_{1} +
l_{2}}}(LT(f))) - 2l_{2} - 1}e_{\alpha_{1} + 2(Deg_{y_{l_{1} +
1}}(LT(f)) + \ldots +
Deg_{y_{l_{1} + l_{2} - 1}}(LT(f))) - 2l_{2}}$ \\
$y_{l_{1} + l_{2} + 1} \mapsto \lambda_{l_{1} + l_{2} + 1}1_{E} + e_{l_{2} + 1}$ \\
$\ldots$ \\
$y_{l_{1} + l_{2} + l_{3}} \mapsto \lambda_{l_{1} + l_{2} + l_{3}}1_{E} + e_{l_{2} + l_{3}}$\\
$z_{1} \mapsto e_{\alpha_{2} + 1}e_{l_{2} + l_{3} + 1} + \ldots +
e_{\alpha_{2} + Deg_{z_{1}}(LT(f))}e_{l_{2} + l_{3} +
Deg_{z_{1}}(LT(f))}$ \\
$\ldots$ \\
$z_{m_{1}} \mapsto e_{\alpha_{2} + Deg_{z_{1}}(LT(f)) + \ldots +
Deg_{z_{m_{1} - 1}}(LT(f)) + 1}e_{l_{2} + l_{3} + Deg_{z_{1}}(LT(f))
+ \ldots + Deg_{z_{m_{1} - 1}}(LT(f)) + 1} + \ldots + e_{\alpha_{2}
+ Deg_{z_{1}}(LT(f)) + \ldots + Deg_{z_{m_{1}}}(LT(f))}e_{l_{2} +
l_{3} + Deg_{z_{1}}(LT(f)) + \ldots + Deg_{z_{m_{1}}}(LT(f))}$ \\
$\ldots$\\
$z_{m_{1} + 1} \mapsto e_{\alpha_{3}} + e_{\alpha_{3} + 1}e_{\alpha_{4} + 1} + \ldots + e_{\alpha_{3} + Deg_{z_{m_{1} + 1}}(LT(f))}e_{\alpha_{4} + Deg_{z_{m_{1} + 1}}(LT(f)) - 1}$ \\
$\ldots$ \\
$z_{m_{1} + m_{2}} \mapsto e_{\alpha_{3} + Deg_{z_{m_{1}+1}}(LT(f))
+ \ldots + Deg_{z_{m_{1} + m_{2} - 1}}(LT(f))} + e_{\alpha_{4} +
Deg_{z_{m_{1}+1}}(LT(f)) + \ldots + Deg_{z_{m_{1} + m_{2} -
1}}(LT(f)) - m_{2} + 2}e_{\alpha_{3} + Deg_{z_{m_{1}+1}}(LT(f)) +
\ldots + Deg_{z_{m_{1} + m_{2} - 1}}(LT(f)) + 1} + \ldots +
e_{\alpha_{4} + Deg_{z_{m_{1}+1}}(LT(f)) + \ldots + Deg_{z_{m_{1} + m_{2}}}(LT(f)) - m_{2}}e_{\alpha_{3} + Deg_{z_{m_{1}+1}}(LT(f)) + \ldots + Deg_{z_{m_{1} + m_{2}}}(LT(f))}$ \\
$z_{m_{1} + m_{2} + 1} \mapsto e_{\alpha_{3} + Deg_{z_{m_{1}+1}}(LT(f)) + \ldots + Deg_{z_{m_{1} + m_{2}}}(LT(f)) + 1}$ \\
$z_{m_{1} + m_{2} + m_{3}} \mapsto e_{\alpha_{3} +
Deg_{z_{m_{1}+1}}(LT(f)) + \ldots + Deg_{z_{m_{1} + m_{2}}}(LT(f)) +
m_{3}}$
\end{center}

In this case we have $dom(\phi(\gamma_{i}f_{i}u_{i})) =
\beta.\gamma_{i}
e_{1}.\cdots.e_{\alpha_{6}}.e_{k+1}.\cdots.e_{\alpha_{5}}$ for some
$\beta \in F - \{0\}$. By Lemma 7.1 of \cite{Fonseca}, we obtain
$dom(\phi(\gamma_{i}f_{i}u_{i})) = \beta.\gamma_{i}
e_{1}.\cdots.e_{\alpha_{6}}.e_{k+1}.\cdots.e_{\alpha_{5}} = dom(f) =
0$ which is a contradiction.

\

Now we study Case 2. Let us consider the following graded homomorphism:

\begin{center}
$\phi: F\langle y_{1},\ldots,y_{l_{1}+l_{2}+l_{3}},z_{1},\ldots,z_{m_{1}+m_{2}+m_{3}}\rangle \rightarrow E$\\

$y_{1} \mapsto \lambda_{1}1_{E} + e_{k+1}e_{k+2} + \ldots + e_{k +
2Deg_{y_{1}}(LT(f))
- 1}e_{k + 2Deg_{y_{1}}(LT(f))}$ \\
$\ldots$ \\
$y_{l_{1}} \mapsto \lambda_{l_{1}}1_{E} + e_{k +
2(Deg_{y_{1}}(LT(f)) + \ldots + Deg_{y_{l_{1} - 1}}(LT(f))) + 1}e_{k
+ 2(Deg_{y_{1}}(LT(f)) + \ldots + Deg_{y_{l_{1} - 1}}(LT(f))) + 2} +
\ldots + e_{k + 2(Deg_{y_{1}}(LT(f)) + \ldots +
Deg_{y_{l_{1}}}(LT(f))) - 1}e_{k +
2(Deg_{y_{1}}(LT(f)) + \ldots + Deg_{y_{l_{1}}}(LT(f)))}$ \\
$y_{l_{1} + 1} \mapsto \lambda_{l_{1}+1}1_{E} + e_{1} +
e_{\alpha_{1} + 1}e_{\alpha_{1} + 2} + \ldots + e_{\alpha_{1} +
2Deg_{y_{l_{1} + 1}}(LT(f)) - 3}e_{\alpha_{1} +
2Deg_{y_{l_{1} + 1}}(LT(f)) - 2}$ \\
$\ldots$ \\
$y_{l_{1} + l_{2}} \mapsto \lambda_{l_{1}+l_{2}}1_{E} + e_{l_{2}} +
e_{\alpha_{1} + 2(Deg_{y_{l_{1} + 1}}(LT(f)) + \ldots +
Deg_{y_{l_{1} + l_{2} - 1}}(LT(f))) + 2(1 - l_{2}) + 1}e_{\alpha_{1}
+ 2(Deg_{y_{l_{1} + 1}}(LT(f)) + \ldots + Deg_{y_{l_{1} + l_{2} -
1}}(LT(f))) + 2(1 - l_{2}) + 2} + \ldots + e_{\alpha_{1} +
2(Deg_{y_{l_{1} + 1}}(LT(f)) + \ldots + Deg_{y_{l_{1} +
l_{2}}}(LT(f))) - 2l_{2} - 1}e_{\alpha_{1} + 2(Deg_{y_{l_{1} +
1}}(LT(f)) + \ldots +
Deg_{y_{l_{1} + l_{2} - 1}}(LT(f))) - 2l_{2}}$ \\
$y_{l_{1} + l_{2} + 1} \mapsto \lambda_{l_{1}+l_{2}+1}1_{E} + e_{l_{2} + 1}$ \\
$\ldots$ \\
$y_{l_{1} + l_{2} + l_{3}} \mapsto \lambda_{l_{1}+l_{2}+l_{3}}1_{E} + e_{l_{2} + l_{3}}$\\
$z_{1} \mapsto e_{\alpha_{2} + 1} + \ldots + e_{\alpha_{2} +
Deg_{z_{1}}(LT(f))}e_{l_{2} + l_{3} +
Deg_{z_{1}}(LT(f)) - 1}$ \\
$\ldots$ \\
$z_{m_{1}} \mapsto e_{\alpha_{2} + Deg_{z_{1}}(LT(f)) + \ldots +
Deg_{z_{m_{1} - 1}}(LT(f))}e_{l_{2} + l_{3} + Deg_{z_{1}}(LT(f)) +
\ldots + Deg_{z_{m_{1} - 1}}(LT(f))} + \ldots + e_{\alpha_{2} +
Deg_{z_{1}}(LT(f)) + \ldots + Deg_{z_{m_{1}}}(LT(f))}e_{l_{2} +
l_{3} + Deg_{z_{1}}(LT(f)) + \ldots + Deg_{z_{m_{1}}}(LT(f)) - 1}$ \\
$\ldots$\\
$z_{m_{1} + 1} \mapsto e_{\alpha_{3}} + e_{\alpha_{3} + 1}e_{\alpha_{4}} + \ldots + e_{\alpha_{3} + Deg_{z_{m_{1} + 1}}(LT(f))}e_{\alpha_{4} + Deg_{z_{m_{1} + 1}}(LT(f)) - 2}$ \\
$\ldots$ \\
$z_{m_{1} + m_{2}} \mapsto e_{\alpha_{3} + Deg_{z_{m_{1}+1}}(LT(f))
+ \ldots + Deg_{z_{m_{1} + m_{2} - 1}}(LT(f))} + e_{\alpha_{4} +
Deg_{z_{m_{1}+1}}(LT(f)) + \ldots + Deg_{z_{m_{1} + m_{2} -
1}}(LT(f)) - m_{2} + 1}e_{\alpha_{3} + Deg_{z_{m_{1}+1}}(LT(f)) +
\ldots + Deg_{z_{m_{1} + m_{2} - 1}}(LT(f)) + 1} + \ldots +
e_{\alpha_{4} + Deg_{z_{m_{1}+1}}(LT(f)) + \ldots + Deg_{z_{m_{1} + m_{2}}}(LT(f)) - m_{2} - 1}e_{\alpha_{3} + Deg_{z_{m_{1}+1}}(LT(f)) + \ldots + Deg_{z_{m_{1} + m_{2}}}(LT(f))}$ \\
$z_{m_{1} + m_{2} + 1} \mapsto e_{\alpha_{3} + Deg_{z_{m_{1}+1}}(LT(f)) + \ldots + Deg_{z_{m_{1} + m_{2}}}(LT(f)) + 1}$ \\
$z_{m_{1} + m_{2} + m_{3}} \mapsto e_{\alpha_{3} +
Deg_{z_{m_{1}+1}}(LT(f)) + \ldots + Deg_{z_{m_{1} + m_{2}}}(LT(f)) +
m_{3}}$
\end{center}

Notice that $dom(\phi(\gamma_{i}f_{i}u_{i})) = \beta.\gamma_{i}
e_{1}.\cdots.e_{\alpha_{6} - 1}.e_{k+1}.\cdots.e_{\alpha_{5}}$ for
some $\beta \in F - \{0\}$. By Lemma 7.2 of \cite{Fonseca}, we
obtain $dom(\phi(\gamma_{i}f_{i}u_{i})) = dom(f) = 0$ which is a
contradiction.

\

Finally, we deal with Case 3. We suppose that $u_{k} = LBT(f)$.
Notice that for Case 3, we will have $deg_{Z}(beg(LBT(f))) +
deg_{Y}(\psi(LBT(f))) \leq k$. Keeping in mind Case 2, we have
$z_{1} = pr(z)(LT(f))$ when $deg_{Z}(beg(LT(f))) +
deg_{Y}(\psi(LT(f))) = k + 1$.

We shall construct a graded homomorphism assuming, without loss of
generality, the next facts:
\begin{description}
\item $Yyn(LBT(f)) = \{y_{1},\ldots,y_{n_{1}}\} (\mbox{with}
\ |Yyy(LBT(f))| \geq 2)$;

\item $Yyn(LBT(f)) =
\{y_{n_{1}+1},\ldots,y_{n_{1}+n_{2}}\} (\mbox{with} \ |Yyn(LBT(f))|
\geq 2)$;

\item $Yny(LBT(f)) =
\{y_{n_{1}+n_{2}+1},\ldots,y_{n_{1}+n_{2}+n_{3}}\}$;

\item $Zyn(LBT(f)) = \{z_{2},\ldots,z_{m_{1}}\} (\mbox{with}
\ |Zyn(LBT(f))| \geq 2)$;

\item $Zyy(LBT(f)) = \{z_{1},z_{m_{1} + 1},\ldots,z_{m_{1}+m_{2}}\}
(\mbox{with} \ |Zyy(LBT(f))| \geq 2)$;

\item $Zny(LBT(f)) =
\{z_{m_{1}+m_{2}+1},\ldots,z_{m_{1}+m_{2}+m_{3}}\}$.
\end{description}

There exists scalars $\lambda_{1},\ldots,\lambda_{l} \in F$ such
that $f_{k}(\lambda_{1}1_{E},\ldots,\lambda_{l}1_{E}) \neq 0$

We consider the following graded homomorphism:

\begin{center}
$\phi: F\langle y_{1},\ldots,y_{l_{1}+l_{2}+l_{3}},z_{1},\ldots,z_{m_{1}+m_{2}+m_{3}}\rangle \rightarrow E$\\

$y_{1} \mapsto \lambda_{1}1_{E} + e_{k+1}e_{k+2} + \ldots + e_{k +
2Deg_{y_{1}}(LBT(f))
- 1}e_{k + 2Deg_{y_{1}}(LBT(f))}$ \\
$\ldots$ \\
$y_{l_{1}} \mapsto \lambda_{l_{1}}1_{E} +  e_{k +
2(Deg_{y_{1}}(LBT(f)) + \ldots + Deg_{y_{l_{1} - 1}}(LBT(f))) +
1}e_{k + 2(Deg_{y_{1}}(LBT(f)) + \ldots + Deg_{y_{l_{1} -
1}}(LBT(f))) + 2} + \ldots + e_{k + 2(Deg_{y_{1}}(LBT(f)) + \ldots +
Deg_{y_{l_{1}}}(LBT(f))) - 1}e_{k +
2(Deg_{y_{1}}(LBT(f)) + \ldots + Deg_{y_{l_{1}}}(LBT(f)))}$ \\
$y_{l_{1} + 1} \mapsto \lambda_{l_{1} + 1}1_{E} + e_{1} +
e_{\alpha_{1} + 1}e_{\alpha_{1} + 2} + \ldots + e_{\alpha_{1} +
2Deg_{y_{l_{1} + 1}}(LBT(f)) - 3}e_{\alpha_{1} +
2Deg_{y_{l_{1} + 1}}(LBT(f)) - 2}$ \\
$\ldots$ \\
$y_{l_{1} + l_{2}} \mapsto \lambda_{l_{1} + l_{2}}1_{E} + e_{l_{2}}
+ e_{\alpha_{1} + 2(Deg_{y_{l_{1} + 1}}(LBT(f)) + \ldots +
Deg_{y_{l_{1} + l_{2} - 1}}(LBT(f))) + 2(1 - l_{2}) +
1}e_{\alpha_{1} + 2(Deg_{y_{l_{1} + 1}}(LBT(f)) + \ldots +
Deg_{y_{l_{1} + l_{2} - 1}}(LBT(f))) + 2(1 - l_{2}) + 2} + \ldots +
e_{\alpha_{1} + 2(Deg_{y_{l_{1} + 1}}(LBT(f)) + \ldots +
Deg_{y_{l_{1} + l_{2}}}(LBT(f))) - 2l_{2} - 1}e_{\alpha_{1} +
2(Deg_{y_{l_{1} + 1}}(LBT(f)) + \ldots +
Deg_{y_{l_{1} + l_{2} - 1}}(LBT(f))) - 2l_{2}}$ \\
$y_{l_{1} + l_{2} + 1} \mapsto \lambda_{l_{1} + l_{2} + 1}1_{E} + e_{l_{2} + 1}$ \\
$\ldots$ \\
$y_{l_{1} + l_{2} + l_{3}} \mapsto \lambda_{l_{1} + l_{2} + l_{3}}1_{E} + e_{l_{2} + l_{3}}$\\
$z_{2} \mapsto e_{\alpha_{2} + 1}e_{l_{2} + l_{3} + 1} + \ldots +
e_{\alpha_{2} + Deg_{z_{1}}(LBT(f))}e_{l_{2} + l_{3} +
Deg_{z_{1}}(LBT(f))}$ \\
$\ldots$ \\
$z_{m_{1}} \mapsto e_{\alpha_{2} + Deg_{z_{1}}(LBT(f)) + \ldots +
Deg_{z_{m_{1} - 1}}(LBT(f)) + 1}e_{l_{2} + l_{3} +
Deg_{z_{1}}(LBT(f)) + \ldots + Deg_{z_{m_{1} - 1}}(LBT(f)) + 1} +
\ldots + e_{\alpha_{2} + Deg_{z_{1}}(LBT(f)) + \ldots +
Deg_{z_{m_{1}}}(LBT(f))}e_{l_{2} +
l_{3} + Deg_{z_{1}}(LBT(f)) + \ldots + Deg_{z_{m_{1}}}(LBT(f))}$ \\
$\ldots$\\
$z_{1} \mapsto e_{\alpha_{3}} + e_{\alpha_{3} + 1}e_{\alpha_{4} + 1} + \ldots + e_{\alpha_{3} + Deg_{z_{m_{1} + 1}}(LBT(f))}e_{\alpha_{4} + Deg_{z_{m_{1} + 1}}(LBT(f)) - 1}$ \\
$\ldots$ \\
$z_{m_{1} + m_{2}} \mapsto e_{\alpha_{3} + Deg_{z_{m_{1}+1}}(LBT(f))
+ \ldots + Deg_{z_{m_{1} + m_{2} - 1}}(LBT(f))} + e_{\alpha_{4} +
Deg_{z_{m_{1}+1}}(LBT(f)) + \ldots + Deg_{z_{m_{1} + m_{2} -
1}}(LBT(f)) - m_{2} + 2}e_{\alpha_{3} + Deg_{z_{m_{1}+1}}(LBT(f)) +
\ldots + Deg_{z_{m_{1} + m_{2} - 1}}(LBT(f)) + 1} + \ldots +
e_{\alpha_{4} + Deg_{z_{m_{1}+1}}(LBT(f)) + \ldots + Deg_{z_{m_{1} + m_{2}}}(LBT(f)) - m_{2}}e_{\alpha_{3} + Deg_{z_{m_{1}+1}}(LBT(f)) + \ldots + Deg_{z_{m_{1} + m_{2}}}(LBT(f))}$ \\
$z_{m_{1} + m_{2} + 1} \mapsto e_{\alpha_{3} + Deg_{z_{m_{1}+1}}(LBT(f)) + \ldots + Deg_{z_{m_{1} + m_{2}}}(LBT(f)) + 1}$ \\
$z_{m_{1} + m_{2} + m_{3}} \mapsto e_{\alpha_{3} +
Deg_{z_{m_{1}+1}}(LBT(f)) + \ldots + Deg_{z_{m_{1} + m_{2}}}(LBT(f))
+ m_{3}}$
\end{center}

Notice that $dom(\phi(\gamma_{k}f_{k}u_{k})) = \beta.\gamma_{k}
e_{1}.\ldots.e_{\alpha_{6}}e_{k+1}\ldots e_{\alpha_{5}}$ for some
$\beta \in F - \{0\}$. By Lemma 7.3 of \cite{Fonseca}, we get $0 =
dom(\phi(f)) = dom(\phi(\gamma_{k}f_{k}u_{k}))$, which completes the
proof.
\end{proof}

\begin{corolario}\label{impk}
The set $pPol-SS3(a_{1},\ldots,a_{l},b_{1},\ldots,b_{m})$ is a basis for $Multifree(a_{1},\ldots,a_{l},b_{1},\ldots,b_{m})$.
\end{corolario}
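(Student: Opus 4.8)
The plan is to combine the two properties of the set $pPol-SS3(a_{1},\ldots,a_{l},b_{1},\ldots,b_{m})$ that have already been isolated: that it spans $Multifree(a_{1},\ldots,a_{l},b_{1},\ldots,b_{m})$ and that it is linearly independent. First I would recall that by Proposition \ref{proposicao 10.16} every element of $Multifree(a_{1},\ldots,a_{l},b_{1},\ldots,b_{m})$ can be written, modulo $T_2(E_k)$, as a sum $\sum_i f_i u_i$ with each $f_i$ a $p$-polynomial and each $u_i \in SS3$. Restricting to the fixed multidegree $(a_1,\ldots,a_l,b_1,\ldots,b_m)$ and expanding each $p$-polynomial into its constituent monomials shows that the products $fu$ with $f$ a monomial $p$-polynomial of coefficient $1$ and $u\in SS3$, of the prescribed multidegree, span the space; by definition these products are exactly the elements of $pPol-SS3(a_{1},\ldots,a_{l},b_{1},\ldots,b_{m})$. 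Hence the set is a generating set.

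Next I would invoke the immediately preceding Proposition, which establishes that this very set is linearly independent. Since a linearly independent spanning set of a vector space is, by definition, a basis, the corollary follows at once. This is exactly the same deduction that yields the earlier Corollaries \ref{impcan}, \ref{impinf} and \ref{impkstar} in the other three gradings.

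I expect no genuine obstacle in the corollary itself: all of the real difficulty is absorbed into the linear-independence Proposition. There, the argument constructs an explicit $\Z_2$-graded evaluation $\phi$ sending each $y_i$ to a scalar multiple of $1_E$ plus a sum of Grassmann products (the scalar chosen via Corollary \ref{cor1} so that the $p$-polynomial part does not vanish) and each $z_j$ to a Grassmann monomial, arranged so that $dom(\phi(\gamma_i f_i u_i))$, the dominating part coming from the leading term, survives and cannot be cancelled by the images of the smaller terms. That proof splits into the three cases according to whether $deg_Z(beg(LT(f)))+deg_Y(\psi(LT(f)))$ is at most $k$, equals $k+1$ with no bad term present, or equals $k+1$ with a bad term, invoking the appropriate lemma from \cite{Fonseca} in each case. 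For the corollary, the only thing left to check is that the spanning set and the independent set are literally the same set, which is immediate from the definition of $pPol-SS3$.
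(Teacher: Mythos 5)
Your proposal is correct and matches the paper's own (implicit) argument exactly: the corollary is the immediate combination of the spanning statement derived from Proposition \ref{proposicao 10.16} with the linear independence established in the preceding Proposition, and your description of how that independence proof works (the graded evaluation $\phi$, the choice of scalars via Corollary \ref{cor1}, the three cases according to $deg_{Z}(beg(LT(f)))+deg_{Y}(\psi(LT(f)))$ and the presence of bad terms) is faithful to the paper. No gap; nothing further is needed.
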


By Corollary \ref{impk} and Lemma \ref{combssstar} we have the next.

\begin{teo*}
Let $n_1,n_2,k\in\N$, then \[c_{(n_1,n_2)}(E_{k},V)=c_{(n_1,n_2)}^*(SS3).\]
\end{teo*}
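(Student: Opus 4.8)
The plan is to compute $c_{(n_1,n_2)}(E_k,V)$ one multidegree at a time and then to assemble the pieces with Lemma \ref{combssstar}. Since every element of $V$ is multihomogeneous, the space $V_{(n_1,n_2)}$ is the direct sum of the spaces $Multi(a_1,\ldots,a_l,b_1,\ldots,b_m)$ taken over all multidegrees with $a_1+\cdots+a_l=n_1$ and $b_1+\cdots+b_m=n_2$ (subject to the standing bounds $a_i\leq pq$, $b_j\leq p$). The first step is to check that this decomposition is compatible with the graded identities of $E_k$, that is, that
\[V_{(n_1,n_2)}\cap T_2(E_k)=\bigoplus_{(a,b)}\bigl(Multi(a_1,\ldots,b_m)\cap T_2(E_k)\bigr),\]
so that $c_{(n_1,n_2)}(E_k,V)=\sum_{(a,b)}\dim_F Multifree(a_1,\ldots,b_m)$, the sum running over the finitely many multidegrees of total degree $(n_1,n_2)$.

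Granting this, the remainder is immediate. Corollary \ref{impk} exhibits $pPol-SS3(a_1,\ldots,b_m)$ as a basis of each summand $Multifree(a_1,\ldots,b_m)$, whence $\dim_F Multifree(a_1,\ldots,b_m)=|pPol-SS3(a_1,\ldots,b_m)|$. Summing over the multidegrees of total degree $(n_1,n_2)$ therefore counts exactly the polynomials $fu$ with $f$ a monomial $p$-polynomial with coefficient $1$, $u\in SS3$, and $\deg_Y(fu)=n_1$, $\deg_Z(fu)=n_2$. By Lemma \ref{combssstar} this number is $c_{(n_1,n_2)}^*(SS3)$, which is the asserted value.

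The one genuine obstacle is the displayed decomposition, and this is where the finiteness of $F$ bites: over a finite field $T_2(E_k)$ fails to be multihomogeneous, exactly the phenomenon recorded by $y_1^{pq}-y_1^p\in T_2(E)$ in Lemma \ref{id2}, so the usual Vandermonde separation of multidegrees is not available. The saving observation is that the sole non-multihomogeneous relation $y_1^{pq}=y_1^p$ lowers the total $Y$-degree by $p(q-1)$ and hence cannot relate two distinct multidegrees of one and the same total degree $(n_1,n_2)$. Rather than argue multihomogeneity abstractly, I would extract the decomposition directly from the leading-term machinery already used for Corollary \ref{impk}: by Proposition \ref{proposicao 10.16} any element of $V_{(n_1,n_2)}$ is congruent modulo $T_2(E_k)$ to a sum $\sum_u g_u\,u$ with $u\in SS3$ and each $g_u$ a $p$-polynomial, and such a sum lies in $T_2(E_k)$ only when every $g_u$ vanishes. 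To see the latter one selects the $<$-largest $u$ with $g_u\neq0$, invokes Corollary \ref{cor1} to produce a central specialization on which $g_u$ does not vanish, and then builds the homomorphism of the $SS3$ proof to isolate the dominating part and force $g_u=0$. Because the order $<$ compares total degree before anything else, this argument never requires the surviving terms to share a common multidegree, so it yields the cross-multidegree linear independence that the decomposition needs; combined with the spanning furnished by Proposition \ref{proposicao 10.16}, this gives the displayed identity and the theorem follows.
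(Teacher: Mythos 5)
Your skeleton is exactly the paper's proof: the paper derives the theorem in one line from Corollary \ref{impk} together with Lemma \ref{combssstar}, and your second paragraph reproduces that. What you add is an explicit justification of the step the paper leaves implicit, namely that $V_{(n_1,n_2)}\cap T_2(E_{k})$ splits along multidegrees. Unfortunately that justification does not work, and the splitting as you state it is false. The claim that $y_1^{pq}-y_1^{p}$ ``cannot relate two distinct multidegrees of one and the same total degree'' fails for its consequences in several variables: the polynomial $y_1^{pq}y_2^{p}-y_1^{p}y_2^{pq}=(y_1^{pq}-y_1^{p})y_2^{p}+y_1^{p}(y_2^{p}-y_2^{pq})$ lies in $T_2(E_{k})$, its two multihomogeneous components have the same total degree $(pq+p,0)$ and multidegrees $(pq,p)$ and $(p,pq)$ (both admissible under the paper's convention $a_i\leq pq$), and neither component is a graded identity of $E_{k}$ (evaluate at $y_1=y_2=1_E$). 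Hence $V_{(pq+p,0)}\cap T_2(E_{k})$ strictly contains $\bigoplus_{(a,b)}\bigl(Multi(a_1,\ldots,b_m)\cap T_2(E_{k})\bigr)$.

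Your fallback via the leading-term machinery does not repair this. It proves that a combination $\sum_u g_u u$ with $u\in SS3$ lies in $T_2(E_{k})$ only if every $g_u$ vanishes, i.e.\ that the $SS3$-normal form of a graded identity is zero; but the rewriting map of Proposition \ref{proposicao 10.16} is not injective on multidegrees --- it sends both $y_1^{pq}y_2^{p}$ and $y_1^{p}y_2^{pq}$ to the same normal form $y_1^{p}y_2^{p}$ --- so the vanishing of the normal form of a sum of components does not force each component into $T_2(E_{k})$, exactly as the example shows. The statement you need is true, and your argument does close, once you restrict to multidegrees with every $a_i\leq pq-1$ and every $b_j\leq p-1$: for those the normal forms of $Multi(a_1,\ldots,b_m)$ retain the literal multidegree, distinct multidegrees produce disjoint sets of basis elements $fu$, the splitting holds, and the union of the bases is precisely the set of elements of total degree $(n_1,n_2)$ enumerated by $c_{(n_1,n_2)}^{*}(SS3)$. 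The boundary multidegrees (some $a_i=pq$ or $b_j=p$) collapse onto normal forms of strictly smaller total degree and must either be excluded from $V_{(n_1,n_2)}$ or accounted for separately; the paper is silent on this point, so you should state explicitly which convention you adopt before asserting the displayed decomposition.
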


\section{Bounds for $\Z_2$-graded codimensions with respect to a generating subspace of $F\langle X\rangle$}
As mentioned above, if $W$ is a generating subspace of $F\langle X\rangle$, we are not able to recover the codimensions of $W(n)$ from the homogeneous ones. In this section we provide a lower and an upper bound for $c_{(n_1,n_2)}(E,W(n))$ for each homogeneous $\Z_2$-grading of $E$.

\subsection{$E_{can}$}

\

We start with the canonical $\mathbb{Z}_2$-grading on $E$.

\begin{lema}
The polynomials $SS0 \cap F \langle
y_{1},\ldots,y_{l},z_{1},\ldots,z_{m}\rangle$ are linearly independent modulo $T_{2}(E_{can})$.
\end{lema}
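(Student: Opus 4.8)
The plan is to describe the elements of $SS0$ concretely and then separate them by one family of graded evaluations into $E_{can}$. An element of $SS0\cap F\langle y_1,\ldots,y_l,z_1,\ldots,z_m\rangle$ has $\psi=1$ and a multilinear $Z$-part, so it is a monomial $m=y_1^{a_1}\cdots y_l^{a_l}\,z_{i_1}\cdots z_{i_s}$ with $0\le a_t\le p-1$ for every $t$ and $i_1<\cdots<i_s$; thus $m$ is determined by its $Z$-support $S=\{i_1,\ldots,i_s\}$ together with its $Y$-exponent vector $(a_1,\ldots,a_l)$. Suppose $f=\sum_j\gamma_j m_j\in T_2(E_{can})$ is a dependence among distinct such monomials. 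Since in each $m_j$ the even variables already precede the odd ones, I may regroup the sum by the common $Z$-part and write, as an identity in $F\langle X\rangle$, \[f=\sum_S g_S(y_1,\ldots,y_l)\,z_{i_1}\cdots z_{i_s},\] where the outer sum runs over supports $S=\{i_1<\cdots<i_s\}$ and $g_S$ collects the $Y$-monomials of those $m_j$ whose $Z$-part equals $z_{i_1}\cdots z_{i_s}$.

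Next I would evaluate. Fix arbitrary $\alpha_1,\ldots,\alpha_l\in F$ and consider the map $\phi\colon F\langle X\rangle\to E_{can}$ with $z_i\mapsto e_i$ (for $i=1,\ldots,m$) and $y_t\mapsto\alpha_t 1_E$; this is a graded homomorphism since $\alpha_t 1_E\in E^0$ and $e_i\in E^1$. Because the $y_t$ go to central scalars, $\phi\bigl(g_S(y)\,z_{i_1}\cdots z_{i_s}\bigr)=g_S(\alpha_1,\ldots,\alpha_l)\,e_{i_1}\cdots e_{i_s}$, whence \[0=\phi(f)=\sum_S g_S(\alpha_1,\ldots,\alpha_l)\,e_{i_1}\cdots e_{i_s}.\] The elements $e_{i_1}\cdots e_{i_s}$ attached to distinct supports $S$ are distinct members of the basis $B_E$ of $E$, hence $F$-linearly independent; therefore $g_S(\alpha_1,\ldots,\alpha_l)=0$ for every support $S$ and every choice of $\alpha_1,\ldots,\alpha_l\in F$.

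It remains to conclude that each $g_S$ is the zero polynomial. Here I cannot invoke Corollary \ref{cor1} verbatim, because every $Y$-exponent occurring in $g_S$ is at most $p-1$, so $g_S$ need not be a $p$-polynomial. Instead I would use the elementary finite-field fact that, since $\mathrm{char}\,F=p$ forces $q=p^n$ and hence each exponent satisfies $a_t\le p-1<q$, a polynomial in $F[y_1,\ldots,y_l]$ all of whose partial degrees are strictly less than $q$ and which vanishes at every point of $F^l$ must be identically zero. (A nonzero univariate polynomial of degree below $q$ cannot vanish at all $q$ elements of $F$; the multivariate statement follows by iterating this one variable at a time.) Applying this to each $g_S$ forces all its coefficients to vanish, i.e.\ every $\gamma_j=0$, which is exactly the asserted linear independence.

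The only genuinely delicate point is this last step. The regrouping by $Z$-support is immediate from the normal form of the monomials, and the evaluation $\phi$ cleanly separates distinct supports via linear independence of the basis elements of $E$; but the $Y$-part falls outside the scope of the $p$-polynomial machinery of Corollary \ref{cor1}, and must instead be settled by the finite-field vanishing argument that exploits the exponent bound $a_t\le p-1<q$.
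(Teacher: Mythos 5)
Your proof is correct, but it takes a genuinely different route from the paper's. The paper argues by dominating parts: after normalizing so that every variable occurs in every summand (which forces all monomials to share the same multilinear $Z$-part), it singles out the largest monomial $LT(f)$ and builds a single tailored graded substitution, sending each $y_t$ to a sum of $Deg_{y_t}(LT(f))$ disjoint central products $e_{2j-1}e_{2j}$ and each $z_i$ to a single generator; then only $LT(f)$ can produce a summand supported on the full set of generators used (with coefficient a nonzero multiple of $\prod_t Deg_{y_t}(LT(f))!$, nonzero since all exponents are at most $p-1$), so $dom(\phi(f))=\phi(LT(f))\neq 0$, a contradiction. You instead separate the $Z$-supports all at once using the linear independence of the basis $B_E$ under the family of substitutions $z_i\mapsto e_i$, $y_t\mapsto\alpha_t 1_E$, and then kill the $Y$-part with the reduced-polynomial vanishing lemma over a finite field (all partial degrees are at most $p-1<q$). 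You are right that Corollary \ref{cor1} does not apply and that this elementary finite-field fact is the correct substitute; your regrouping is legitimate because elements of $Pr(X)$ are in PBW normal form, so distinct elements of $SS0$ with the same $Z$-part give distinct commutative $Y$-monomials. What each approach buys: yours is shorter, avoids the leading-term bookkeeping and the normalization step, and treats all coefficients simultaneously rather than peeling off a leading term; but it works only because elements of $SS0$ contain no commutators, so sending the $y_t$ to central scalars loses nothing. The paper's heavier substitution is the one that generalizes to the companion lemmas for $SS$, $SS1$ and $SS3$, where commutators are present and would be annihilated by scalar values of the even variables.
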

\begin{proof}
Let $f = \sum_{i = 1}^{n}\lambda_{i}M_{i} \equiv 0$, where the
$M_i$'s belong to $SS0 \cap F \langle
y_{1},\ldots,y_{l},z_{1},\ldots,z_{m}\rangle$ modulo
$T_{2}(E_{can})$. We shall assume without loss of generality that
each variable appears in each summand of $f$ with degree at least.
Suppose, by contradiction that there exists $M_{j} \in
\{M_{1},\ldots,M_{l}\}$, such that $\lambda_{j} \neq 0$. Moreover we
shall assume $M_{j} = LT(f)$. Let us define $\alpha =
2(Deg_{y_{1}}(LT(f)) + \ldots + Deg_{y_{n_{1}}}(LT(f)))$.

Let $n_1\leq l$ and $n_2\leq m$. We consider the graded homomorphism $\phi : F\langle
y_{1},\ldots,y_{n_{1}},z_{1},\ldots,z_{n_{2}}\rangle \rightarrow E_{can}$:

\begin{center}
$y_{1} \mapsto e_{1}e_{2} + \ldots + e_{2Deg_{y_{1}}(LT(f)) -
1}e_{2Deg_{y_{1}}(LT(f))}$ \\
$\ldots$ \\
$y_{n_{1}} \mapsto e_{2(Deg_{y_{1}}(LT(f)) + \ldots + Deg_{y_{n_{1}
- 1}}(LT(f))) + 1}e_{2(Deg_{y_{1}}(LT(f)) + \ldots + Deg_{y_{n_{1} -
1}}(LT(f))) + 2} + \ldots + e_{2(Deg_{y_{1}}(LT(f)) + \ldots +
Deg_{y_{n_{1}}}(LT(f))) - 1}e_{2(Deg_{y_{1}}(LT(f)) + \ldots +
Deg_{y_{n_{1}}}(LT(f)))}$ \\
$z_{1} \mapsto e_{\alpha + 1}$\\
$\ldots$ \\
$z_{n_{2}} \mapsto e_{\alpha + n_{2}}$
\end{center}
Notice that $\phi(LT(f)) = \lambda e_{1}.\cdots.e_{\alpha + n_{2}}$
for some $\lambda \in F - \{0\}$.

If there exists $M_{i} \in \{M_{1},\ldots,M_{l}\}$, such that $M_{i}
< LT(f)$ and $\lambda_{i} \neq 0$, then there exists $x \in Y$ such
that $Deg_{x} (M_{i}) < Deg_{x} (LT(f))$. Let us observe that
$supp(\phi(x))$ is not contained in any of the summand of
$dom(\phi(M_{i}))$. So the support of each summand of
$dom(\phi(M_{i}))$ does not contain $supp(\phi(LT(f)))$. Then
$dom(\phi(f)) = \phi(LT(f)) = \lambda.e_{1}\cdots e_{\alpha + n_{2}}
= 0$ which is a contradiction and we are done.
\end{proof}

Combining Lemma \ref{combss0} and Proposition \ref{oo} and in light of Lemma \ref{combssstar2} we have the following result.

\begin{teo}
For each $n\in\N$ and $n_1,n_2\in\N$ such that $n_1+n_2=n$ we have \[c_{(n_1,n_2)}(SS0)\leq c_{(n_1,n_2)}(E_{can},W(n))\leq
c_{(n_1,n_2)}^{\circ}(SS0).\]
\end{teo}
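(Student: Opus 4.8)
The plan is to sandwich the codimension $c_{(n_1,n_2)}(E_{can},W(n))$ between the two stated bounds by exhibiting, on the lower side, an explicit linearly independent family modulo $T_2(E_{can})$, and on the upper side, an explicit spanning family of $W(n)/(W(n)\cap T_2(E_{can}))$. Throughout I would exploit the fact proved just above, namely that the elements of $SS0\cap F\langle y_1,\ldots,y_l,z_1,\ldots,z_m\rangle$ are linearly independent modulo $T_2(E_{can})$, together with the counting Lemma \ref{combss0} and the ``$p$-polynomial'' expansions of Lemma \ref{combssstar2} and Proposition \ref{oo}.

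\emph{Lower bound.} First I would argue that $c_{(n_1,n_2)}(SS0)\leq c_{(n_1,n_2)}(E_{can},W(n))$. The key point is that the multihomogeneous elements of $SS0$ of degree $n_1$ in the $y$'s and $n_2$ in the $z$'s, of which there are exactly $c_{(n_1,n_2)}(SS0)=\kappa(n_1,p,l)\kappa(n_2,2,m)$ by Lemma \ref{combss0}, all lie in the subspace of $F\langle X\rangle$ generated in degree $n=n_1+n_2$, hence in $W(n)$ up to the identification $F\langle X\rangle=\bigoplus_{d\geq 0}W^{\otimes d}$ used in Proposition \ref{cod2}. Since by the preceding lemma these elements are linearly independent modulo $T_2(E_{can})$, their images in $W(n)/(W(n)\cap T_2(E_{can}))$ remain independent, giving at least $c_{(n_1,n_2)}(SS0)$ independent classes. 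The one technical nuisance here is matching the grading: I must restrict to the bidegree-$(n_1,n_2)$ homogeneous component and check that these $SS0$-elements genuinely have that bidegree, which is immediate from their form.

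\emph{Upper bound.} For $c_{(n_1,n_2)}(E_{can},W(n))\leq c_{(n_1,n_2)}^{\circ}(SS0)$, I would use Proposition \ref{oo}: every element of $Pr(X)$, and hence every element of $W(n)$, is congruent modulo $\langle y_1y_2-y_2y_1,\ z_1z_2+z_2z_1,\ y_1z_2-z_2y_1,\ y_1^{pq}-y_1^{p}\rangle_{T_2}=T_2(E_{can})$ (by Theorem \ref{ecan}) to a linear combination $\sum f_iu_i$ with $f_i$ a $p$-polynomial and $u_i\in SS0$ distinct. Restricting to bidegree $(n_1,n_2)$, such expressions span the relevant quotient, so the codimension is at most the number of distinct products $f_iu_i$ of that bidegree. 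Counting these is exactly what $c^{\circ}_{(n_1,n_2)}(SS0)=\sum_{s\leq n_1}p(s)\,c_{(n_1-s,n_2)}(SS0)$ records in Lemma \ref{combssstar2}: the $p$-polynomial factor $f_i$ carries degree $s$ in the even variables (with $p\mid s$, contributing the factor $p(s)$) and the $SS0$-part carries the remaining bidegree $(n_1-s,n_2)$. Hence the number of spanning elements is at most $c^{\circ}_{(n_1,n_2)}(SS0)$, which bounds the dimension of the quotient from above.

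\emph{Main obstacle.} The genuinely delicate step is the upper bound bookkeeping, specifically verifying that the products $f_iu_i$ appearing after the reduction of Proposition \ref{oo} are counted without overcounting and without collision across different bidegree splittings $s\mapsto(n_1-s,n_2)$; the distinctness claim ``$u=u'\Rightarrow f=f'$'' recorded earlier for $Multifree$ is what guarantees the pair $(f_i,u_i)$ is recovered from the product, and I would invoke it to ensure the count $c^{\circ}_{(n_1,n_2)}(SS0)$ is an honest upper bound rather than an undercount. The lower bound, by contrast, is essentially a direct consequence of the already-proved independence lemma and should present no real difficulty beyond the grading bookkeeping.
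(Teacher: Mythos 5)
Your argument is exactly the one the paper intends: the lower bound follows from the linear independence of the $SS0$ elements modulo $T_{2}(E_{can})$ established in the lemma immediately preceding the theorem together with the count of Lemma \ref{combss0}, and the upper bound follows from the spanning set $\sum f_{i}u_{i}$ provided by Proposition \ref{oo} together with the count of Lemma \ref{combssstar2}. The paper's proof is precisely this combination (stated in one sentence), so your proposal matches it in both structure and substance.
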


\subsection{$E_{\infty}$}

\

We shall consider now the case of $E_{\infty}$.

\begin{lema}\label{primeiro lema}
The polynomials $SS \cap F\langle
y_{1},\ldots,y_{l},z_{1},\ldots,z_{m}\rangle$ form a linearly independent set modulo $T_{2}(E_{\infty})$.
\end{lema}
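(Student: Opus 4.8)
The plan is to argue by contradiction by evaluating into $E_\infty$ and comparing dominating parts, exactly as in the multihomogeneous case treated in the proof leading to Corollary \ref{impinf}, but now with no $p$-polynomial prefactor to carry along. Suppose $f=\sum_{i=1}^n\lambda_i M_i$ vanishes modulo $T_2(E_\infty)$, where the $M_i\in SS\cap F\langle y_1,\ldots,y_l,z_1,\ldots,z_m\rangle$ are distinct and some $\lambda_i\neq 0$; let $M_i=LT(f)$ be the largest such term for the order on $SS$. Recall that each element of $SS$ is either a pure product of powers (the case $\psi=1$) or multilinear, so every variable of $M_i$ lies in $beg(M_i)$ or in $\psi(M_i)$ but, being a genuine element of $SS$, never in both; thus I would only have to partition the variables of $LT(f)$ into the classes $Yyn,Yny,Zyn,Zny$ according to whether they occur among the powers or inside a commutator.

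Next I would construct a $\Z_2$-graded homomorphism $\phi\colon F\langle X\rangle\to E_\infty$ patterned on the one used for Corollary \ref{impinf}, deleting the scalar summands $\lambda_j1_E$ that there served to activate the $p$-polynomials. The generator indices are allocated so that $\phi$ respects $\deg_\infty$: a variable occupying a commutator slot is sent to a single generator (of odd index if the variable is odd and of even index if it is even, so that the $\Z_2$-degree is correct and, being a single generator, it is non-central and keeps the image of the commutator nonzero), while a variable occurring among the powers is sent to a sum of pairwise disjoint, square-zero products of two generators of the appropriate total parity. Because every exponent in $beg(LT(f))$ is at most $p-1$, expanding $\phi(LT(f))$ yields a dominating part equal to a nonzero scalar (the only coefficients arising are factorials $a!$ with $a\le p-1$ and powers of $2$, all invertible in $F$ since $p>2$) times a single basis element $e_1\cdots e_N$ of maximal length, whose support exhausts all the generators introduced.

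The final step is to show this top monomial cannot be cancelled. Since $\phi$ uses only the $N$ generators $e_1,\ldots,e_N$, every $\phi(M_j)$ has length-support at most $N$, and a summand of length exactly $N$ would have to be $e_1\cdots e_N$ itself; but for $M_j<LT(f)$, Lemma 7.1 of \cite{Fonseca} guarantees that no summand of $dom(\phi(M_j))$ contains $\{e_1,\ldots,e_N\}$ in its support. Hence the coefficient of $e_1\cdots e_N$ in $dom(\phi(f))$ equals $\lambda_i$ times a nonzero scalar, so $dom(\phi(f))\neq0$. As $f\in T_2(E_\infty)$ forces $\phi(f)=0$, this is the desired contradiction, and the set is linearly independent modulo $T_2(E_\infty)$.

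I expect the main obstacle to be the bookkeeping in the definition of $\phi$: the generator indices must be assigned so that the $\deg_\infty$-degree of each variable's image matches its declared degree, so that distinct variable occurrences consume disjoint generators and $\phi(LT(f))$ genuinely attains the full support $e_1\cdots e_N$, and so that the interplay of the single-generator commutator slots with the paired power slots does not let a smaller term reach length $N$. This is precisely the combinatorial core of the corresponding $E_\infty$ statement, and Lemma 7.1 of \cite{Fonseca} is exactly the device that certifies the non-interference of the lower-order terms.
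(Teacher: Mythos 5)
Your overall strategy is the same as the paper's: assume a vanishing combination, isolate the leading term $LT(f)$ for the order on $SS$, build a graded substitution into $E_{\infty}$ whose dominating part on $LT(f)$ is a nonzero multiple of a single basis monomial of maximal length, and verify that no smaller term can reach that support. The only cosmetic difference in the last step is that the paper's proof of this particular lemma replaces the appeal to Lemma 7.1 of \cite{Fonseca} by an explicit three-case analysis following the definition of the order on $SS$ (it quotes Lemma 7.1 only in the multihomogeneous analogue); either device is acceptable once the substitution is correctly defined.

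There is, however, a genuine gap at the very first step of your construction: you assert that for an element of $SS$ no variable can occur both in $beg(M_i)$ and in $\psi(M_i)$, and accordingly you only supply images for the classes $Yyn$, $Yny$, $Zyn$, $Zny$. That reading of the definition of $SS$ cannot be the operative one. For Proposition \ref{id3} to hold, $SS$ must contain elements such as $y_1^2[y_1,y_2]$, which is not a graded identity of $E_{\infty}$, is neither multilinear nor commutator-free, and cannot be expressed modulo $T_2(E_{\infty})$ as a combination $\sum f_ju_j$ with every $u_j$ multilinear or commutator-free; consistently, the paper assumes the classes $Yyy(LT(f))$ and $Zyy(LT(f))$ to be non-empty and sends such a variable to a single generator \emph{plus} a sum of disjoint pairs, the single generator feeding its commutator occurrence and the pairs feeding its occurrences among the powers. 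Your recipe is undefined for exactly these variables, and they are the delicate case: one must check that the two roles consume disjoint generators and that the dominating part of $\phi(LT(f))$ still exhausts the full support, which is where all the index bookkeeping in the paper's $\phi$ lives. As written, your argument proves linear independence only for the subset of $SS$ consisting of multilinear or commutator-free elements, not for all of $SS\cap F\langle y_1,\ldots,y_l,z_1,\ldots,z_m\rangle$.
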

\begin{proof}
Let $f = \sum\limits_{i = 1}^{n} \lambda_{i}M_{i} \equiv 0$ a linear
combination modulo $T_{2}(E_{\infty})$, of elements of $SS \cap
F\langle y_{1},\ldots,y_{l},z_{1},\ldots,z_{m}\rangle$. We may
assume each variable appears in each summand of $f$ with degree at
least 1. Suppose for the sake of contradiction that $M_{j} \in
\{M_{1},\ldots,M_{n}\}$, such that $\lambda_{j} \neq 0$.  We shall
also assume, without loss of generality that $M_{j} = LT(f)$. Let us
denote by $\alpha_{1} = 4(Deg_{y_{1}}(LT(f)) + \ldots +
Deg_{y_{n_{1}}}(LT(f)))$, $\alpha_{2} = \alpha_{1} + 2(Deg_{y_{n_{1}
+ 1}}(LT(f)) + \ldots + Deg_{y_{n_{1} + n_{2}}}(LT(f))) + 2(n_{2} -
n_{1})$, $\alpha_{3} = \alpha_{2} + 2.n_{3}$, $\alpha_{4} =
\alpha_{3} + 2.(Deg_{z_{1}}(LT(f)) + \ldots +
Deg_{z_{m_{1}}}(LT(f)))$, $\alpha_{5} = 2.(Deg_{z_{1}}(LT(f)) +
\ldots + Deg_{z_{m_{1}}}(LT(f))) - 1$, $\alpha_{6} = \alpha_{5} + 2(
Deg_{z_{m_{1} + 1}}(LT(f)) + \ldots + Deg_{z_{m_{1} +
m_{2}}}(LT(f)))$, $\alpha_{7} = \alpha_{6} + 2.m_{3}$ e $\alpha_{8}
= \alpha_{3} + \alpha_{5} + 2.(Deg_{z_{m_{1} + 1}}(LT(f)) + \ldots +
Deg_{z_{m_{1} + m_{2}}}(LT(f))) + 2(2 - (m_{2} + 1)) - 1$

Let $n_1+n_2+n_3\leq l$ and $m_1+m_2+m_3\leq m$.

We consider the graded homomorphism $\phi: F\langle
y_{1},\ldots,y_{n_{1} + n_{2} + n_{3}}, z_{1}, \ldots, z_{m_{1} +
m_{2} + m_{3}} \rangle \rightarrow E_\infty$:

\begin{center}
$y_{1} \mapsto e_{2}e_{4} + \ldots +
e_{4.Deg_{y_{1}}(LT(f)) - 2}e_{4.Deg_{y_{1}}(LT(f))}$ \\

$\ldots$ \\

$y_{n_{1}} \mapsto e_{4(Deg_{y_{1}}(LT(f)) + \ldots + Deg_{y_{n_{1}
- 1}}(LT(f))) + 2}e_{4(Deg_{y_{1}}(LT(f)) + \ldots + Deg_{y_{n_{1} -
1}}(LT(f))) + 4} + \ldots + e_{4(Deg_{y_{1}}(LT(f)) + \ldots +
Deg_{y_{n_{1}}}(LT(f))) - 2}e_{4(Deg_{y_{1}}(LT(f)) + \ldots +
Deg_{y_{n_{1}}}(LT(f)))}$ \\

$y_{n_{1} + 1} \mapsto e_{\alpha_{1} + 2} + e_{\alpha_{1} +
4}e_{\alpha_{1} + 6} + \ldots + e_{\alpha_{1} + 4. Deg_{y_{n_{1} +
1}}(LT(f))}e_{\alpha_{1} +  4.Deg_{y_{n_{1} + 1}}(LT(f)) + 2}$ \\

$\ldots$ \\

$y_{n_{1} + n_{2}} \mapsto e_{\alpha_{1} + 4(Deg_{y_{n_{1} +
1}}(LT(f)) + \ldots + Deg_{y_{n_{1} + n_{2} - 1}}(LT(f))) + 2(n_{2}
- n_{1})} + e_{\alpha_{1} + 4(Deg_{y_{n_{1} + 1}}(LT(f)) + \ldots +
Deg_{y_{n_{1} + n_{2} - 1}}(LT(f))) + 2(n_{2} - n_{1}) +
2}e_{\alpha_{1} + 4(Deg_{y_{n_{1} + 1}}(LT(f)) + \ldots +
Deg_{y_{n_{1} + n_{2} - 1}}(LT(f))) + 2(n_{2} - n_{1}) + 4} + \ldots
+ e_{\alpha_{1} + 4(Deg_{y_{n_{1} + 1}}(LT(f)) + \ldots +
Deg_{y_{n_{1} + n_{2}}}(LT(f))) + 2(n_{2} - n_{1} - 1)}e_{\alpha_{1}
+ 4(Deg_{y_{n_{1} + 1}}(LT(f)) + \ldots
+ Deg_{y_{n_{2} + n_{1}}}(LT(f))) + 2(n_{2} - n_{1})}$ \\

$y_{n_{1} + n_{2} + 1} \mapsto e_{\alpha_{2} + 2}$ \\

$\ldots$ \\

$y_{n_{1} + n_{2} + n_{3}} \mapsto e_{\alpha_{2} + 2.n_{3}}$ \\

$z_{1} \mapsto e_{1}e_{\alpha_{3} + 2} + \ldots +
e_{2.(Deg_{z_{1}}(LT(f))) - 1}e_{\alpha_{3} + 2(Deg_{z_{1}}(LT(f)))}$\\

$\ldots$ \\

$z_{m_{1}} \mapsto e_{2.(Deg_{z_{1}}(LT(f)) + \ldots + Deg_{z_{m_{1}
- 1}}(LT(f))) + 1}e_{\alpha_{3} + 2.(Deg_{z_{1}}(LT(f)) + \ldots +
Deg_{z_{m_{1} - 1}}(LT(f))) + 2} + \ldots + e_{2.(Deg_{z_{1}}(LT(f))
+ \ldots + Deg_{z_{m_{1}}}(LT(f))) - 1}e_{\alpha_{3} +
2.(Deg_{z_{1}}(LT(f)) + \ldots +
Deg_{z_{m_{1}}}(LT(f)))}$ \\

$z_{m_{1} + 1} \mapsto e_{\alpha_{5} + 2} + e_{\alpha_{5} +
4}e_{\alpha_{3} + \alpha_{5} + 3} +
\ldots + e_{\alpha_{5} + 2Deg_{z_{m_{1} + 1}}(LT(f))}e_{\alpha_{3} + \alpha_{5} + 2Deg_{z_{m_{1} + 1}}(LT(f)) - 1}$ \\

$\ldots$ \\

$z_{m_{1} + m_{2}} \mapsto e_{\alpha_{5} +
2(Deg_{z_{m_{1}+1}}(LT(f)) + \ldots + Deg_{z_{m_{1} + m_{2} -
1}}(LT(f))) + 2} + e_{\alpha_{5} + 2( Deg_{z_{m_{1} + 1}}(LT(f)) +
\ldots + Deg_{z_{m_{1} + m_{2} - 1}}(LT(f))) + 4}e_{\alpha_{3} +
\alpha_{5} + 2.(Deg_{z_{m_{1} + 1}}(LT(f)) + \ldots + Deg_{z_{m_{1}
+ m_{2} - 1}}(LT(f))) + 2(2 - m_{2}) + 1} + \ldots + e_{\alpha_{5} +
2( Deg_{z_{m_{1} + 1}}(LT(f)) + \ldots + Deg_{z_{m_{1} +
m_{2}}}(LT(f)))}e_{\alpha_{3} + \alpha_{5} + 2.(Deg_{z_{m_{1} +
1}}(LT(f)) + \ldots + Deg_{z_{m_{1} +
m_{2}}}(LT(f))) + 2(2 - (m_{2} + 1)) - 1}$ \\

$z_{m_{1} + m_{2} + 1} \mapsto e_{\alpha_{6} + 2}$ \\

$\ldots$ \\

$z_{m_{1} + m_{2} + m_{3}} \mapsto e_{\alpha_{6} + 2.m_{3}}$
\end{center}
Notice that $\phi(LT(f)) =
\lambda\lambda_{j}e_{1}.\cdots.e_{\alpha_{7}}e_{2}.\cdots.e_{\alpha_{8}}$
for some $\lambda \in F - \{0\}$.

Suppose there exists $M_{i} \in \{M_{1},\ldots,M_{n}\}$ such that
$M_{i} < LT(f)$ e $\lambda_{i} \neq 0$. We claim that none of the
support of any summand of $dom(\phi(M_{i}))$ contains $supp(g)$,
where $g = e_{1}.\cdots.e_{\alpha_{7}}e_{2}.\cdots.e_{\alpha_{8}}$.

\begin{description}
\item Case 1: $deg (M_{i}) < deg (LT(f))$.
\subitem In this case there exists a variable $x \in X$ such that
$Deg_{x}(LT(f)) > Deg_{x}(M_{i})$. So by the definition of $\phi$, we have $supp(\phi(x))$ is not contained in any of the summand of $(dom(\phi(M_{i})))$.
\item Case 2: $deg(M_{i}) = deg(LT(f))$, but $beg(M_{i}) <_{lex-rig}
beg(LT(f))$. \subitem In this case there exists a variable $x \in
X$ such that $Deg_{x}(beg(LT(f))) > Deg_{x}(beg(M_{i}))$. Again by the definition of $\phi$, we have $supp(\phi(x))$ is not contained in the support of any of the summand of $\phi(M_{i})$.
\item Case 3: $deg(M_{i}) = deg(LT(f)), beg(M_{i}) = beg(LT(f))$, but $\psi(M_{i}) <_{lex-rig} \psi(LT(f))$.
In this case there exists a variable $x \in X$ such that
$Deg_{x}(\psi(LT(f))) = 1$ and $Deg_{x}(\psi(M_{i}))= 0$.
Analogously to Case 1, $supp(\phi(x))$ is not contained in the
support of any summand of $\phi(M_{i})$.
\end{description}

In light of the previous cases we have $dom(\phi(f)) =
\phi(LT(f))$. On the other hand $\phi(f) = 0$, then $dom(\phi(f)) = 0$. This means $\lambda.\lambda_{j}g =
0$ which is a contradiction and the proof is complete.
\end{proof}

Combining Lemma \ref{combss} and Proposition \ref{id3} and in light of Lemma \ref{combssstar2} we have the following result.

\begin{teo}
For each $n\in\N$ and $n_1,n_2\in\N$ such that $n_1+n_2=n$ we have \[c_{(n_1,n_2)}(SS)\leq c_{(n_1,n_2)}(E_{\infty},W(n))\leq c_{(n_1,n_2)}^\circ(SS).\]
\end{teo}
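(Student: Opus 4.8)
The plan is to bound $W(n)_{(n_1,n_2)}$ modulo $T_2(E_\infty)$ from above and below by explicitly enumerable families, in complete analogy with the statement for $E_{can}$ proved just above. For the upper estimate, note that $W(n)=W^{\otimes n}\subseteq F\langle X\rangle$ gives $W(n)_{(n_1,n_2)}\subseteq F\langle X\rangle_{(n_1,n_2)}$, so the induced map on quotients is injective and
\[c_{(n_1,n_2)}(E_\infty,W(n))\leq \dim_F F\langle X\rangle_{(n_1,n_2)}/\bigl(F\langle X\rangle_{(n_1,n_2)}\cap T_2(E_\infty)\bigr).\]
By Theorem \ref{teorema 8.2} we have $T_2(E_\infty)=\langle[x_1,x_2,x_3],z_1^p,y_1^{pq}-y_1^p\rangle_{T_2}$, so Proposition \ref{id3} applies and every bidegree-$(n_1,n_2)$ polynomial is congruent modulo $T_2(E_\infty)$ to a combination of products $f_iu_i$ with $f_i$ a $p$-polynomial and $u_i\in SS$. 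The classes of these products span the right-hand space, and Lemma \ref{combssstar2} counts the admissible ones as $c^{\circ}_{(n_1,n_2)}(SS)$, which is the desired upper bound.

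For the lower estimate I would count, via Lemma \ref{combss}, the $c_{(n_1,n_2)}(SS)$ elements of $SS$ of $0$-degree $n_1$ and $1$-degree $n_2$, and recall from Lemma \ref{primeiro lema} that they are linearly independent modulo $T_2(E_\infty)$. It then remains to place their classes inside $W(n)_{(n_1,n_2)}/(W(n)_{(n_1,n_2)}\cap T_2(E_\infty))$. Each such $u\in SS$ is homogeneous of total degree $n_1+n_2=n$; since $W$ generates $F\langle X\rangle$, its degree-one component spans $\mathrm{span}(X)$, so choosing preimages in $W$ of the $n$ variables occurring in $u$ and multiplying them yields an element of $W(n)$ whose component of bidegree $(n_1,n_2)$ (its lowest total-degree part) equals $u$. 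Producing genuinely bidegree-pure elements of $W(n)$ whose classes are exactly the $u$'s, so that Lemma \ref{primeiro lema} applies verbatim, is carried out by ordering the $SS$-elements by the total order on $SS$ and cancelling the higher total-degree tails by products of shorter length, equivalently by tracking dominating parts under the evaluations constructed in the proof of Lemma \ref{primeiro lema}. This exhibits $c_{(n_1,n_2)}(SS)$ independent classes, giving the left-hand inequality.

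The main obstacle is the lower bound. For an arbitrary generating subspace $W$ the monomials $u\in SS$ need not themselves lie in $W(n)$, so one cannot simply invoke Lemma \ref{primeiro lema}; the delicate point is to manufacture bidegree-homogeneous representatives inside $W(n)_{(n_1,n_2)}$ and to control their higher total-degree corrections modulo the non-homogeneous identity $y_1^{pq}-y_1^p$. This is exactly the feature distinguishing the finite-field case and forcing the count $c^{\circ}$, involving arbitrary $p$-polynomials (Lemma \ref{combssstar2}), rather than the homogeneous count coming from monomial $p$-polynomials (Lemma \ref{combssstar}). By contrast, the upper bound is essentially formal once Proposition \ref{id3} and Lemma \ref{combssstar2} are available.
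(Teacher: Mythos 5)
Your argument is essentially the paper's: the upper bound is obtained from Proposition \ref{id3} (every element of bidegree $(n_1,n_2)$ is spanned modulo $T_2(E_\infty)$ by products $f_iu_i$ with $f_i$ a $p$-polynomial and $u_i\in SS$) together with the count of Lemma \ref{combssstar2}, and the lower bound from the linear independence of the $SS$-elements modulo $T_2(E_\infty)$ (Lemma \ref{primeiro lema}) together with the count of Lemma \ref{combss}. The only point where you diverge is the ``main obstacle'' you raise for the lower bound, which in the paper's setting is vacuous: the $W$ in play is the one of Proposition \ref{cod2}, in effect the linear span of $X$, so $W(n)=W^{\otimes n}$ is exactly the span of the monomials of total degree $n$, and since every $u\in SS$ of bidegree $(n_1,n_2)$ with $n_1+n_2=n$ is a linear combination of monomials of that same multidegree, it already lies in $W(n)_{(n_1,n_2)}$ and no representatives need to be manufactured. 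I would also caution that your proposed repair for a genuinely non-homogeneous $W$ --- cancelling the higher total-degree tails ``by products of shorter length'' --- does not typecheck, since such products lie in $W(m)$ with $m<n$ rather than in $W(n)$; but as the difficulty itself is spurious here, this does not affect the validity of the proof.
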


\subsection{$E_{k^{*}}$}

\

\begin{lema}\label{segundo lema}
The polynomials $SS1 \cap F\langle
y_{1},\ldots,y_{l},z_{1},\ldots,z_{m}\rangle$ form a linearly independent set modulo $T_{2}(E_{k^{*}})$.
\end{lema}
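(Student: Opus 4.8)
The plan is to mirror the proofs of the two preceding lemmas (the $E_{can}$ case and Lemma~\ref{primeiro lema} for $E_{\infty}$) and to argue by contradiction through a single graded evaluation. First I would take a relation $f=\sum_{i=1}^{n}\lambda_iM_i$ that is zero modulo $T_2(E_{k^*})$, with the $M_i$ distinct elements of $SS1\cap F\langle y_1,\ldots,y_l,z_1,\ldots,z_m\rangle$; exactly as in the previous lemmas I may reduce to the situation in which every variable occurring does so with degree at least $1$ in each summand. Assuming some $\lambda_j\neq 0$, I order the summands by the total order $<$ on $SS$ and set $M_j=LT(f)$. The aim is to construct a $\Z_2$-graded homomorphism $\phi$ into $E_{k^*}$ under which $\phi(LT(f))$ equals, up to a nonzero scalar, a fixed basis element of maximal length, while every $M_i<LT(f)$ contributes only terms of strictly smaller support; then $dom(\phi(f))=\phi(LT(f))\neq 0$ will contradict $\phi(f)=0$.

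For $\phi$ I would reuse the homomorphism built in the proof that $pPol\text{-}SS1$ is independent (Corollary~\ref{impkstar}), stripped of the scalar summands $\lambda_i1_E$ on the even variables, since here there is no $p$-polynomial factor to detect. Concretely, using thresholds read off from the partitions $Yyn,Yyy,Yny$ and $Zyn,Zyy,Zny$ of the variables of $LT(f)$, I send each even $y_i$ to a sum of disjoint commutator-carrying pairs $e_ae_b$ with all indices exceeding $k$, so that these images have $\Z_2$-degree $0$, and each odd $z_i$ to an element carrying exactly one generator from $\{e_1,\ldots,e_k\}$ together with an even tail, so that these images genuinely have degree $1$. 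The decisive observation is that the defining condition $deg_Z\leq k$ of $SS1$ forces the total number of odd charges that $LT(f)$ places inside $e_1,\ldots,e_k$ to be at most $k$; hence no two collide, $\phi(LT(f))\neq 0$, and $\phi$ is automatically a graded homomorphism. This is exactly the point at which the hypothesis $SS1$, rather than all of $SS$, is needed.

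For the domination step I would invoke the trichotomy in the definition of $<$. If $M_i<LT(f)$ then either $deg\,M_i<deg\,LT(f)$, or the degrees agree but $beg(M_i)<_{lex-rig}beg(LT(f))$, or both agree but $\psi(M_i)<_{lex-rig}\psi(LT(f))$. In each case one produces a variable $x$ whose $Deg_x$ is strictly larger on the relevant part of $LT(f)$, and then $supp(\phi(x))$ is contained in no summand of $dom(\phi(M_i))$. Lemma~7.1 of \cite{Fonseca} promotes this to the assertion that $supp(\phi(LT(f)))$ lies in no summand of $dom(\phi(M_i))$, so every lower term drops out of the top component and $dom(\phi(f))=\phi(LT(f))\neq 0$, the desired contradiction.

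I expect the main obstacle to be the index bookkeeping: writing $\phi$ explicitly, choosing the offsets so that distinct variables of $LT(f)$ receive disjoint supports, and verifying that $\phi(LT(f))$ is precisely the intended top basis element of the form $e_1\cdots e_s\,e_{k+1}\cdots e_t$ for indices $s,t$ determined by the degrees of $LT(f)$. This is routine but lengthy. The conceptual content, by contrast, is light: once the odd charges are confined to $e_1,\ldots,e_k$ by the $deg_Z\leq k$ constraint and the even pairs are pushed beyond index $k$, both the nonvanishing of the leading image and the grading compatibility are forced, and the trichotomy argument is identical in spirit to the one used for $E_{\infty}$.
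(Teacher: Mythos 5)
Your proposal is correct and follows essentially the same route as the paper: the paper's proof of Lemma~\ref{segundo lema} is precisely the homomorphism from the $pPol$-$SS1$ independence argument with the scalar summands $\lambda_i1_E$ removed, sending even variables to pairs of generators of index exceeding $k$ and odd variables to summands each carrying one generator of index at most $k$ (which fit without collision exactly because $deg_Z\leq k$ for elements of $SS1$), followed by the same trichotomy-plus-Lemma~7.1 domination argument as in Lemma~\ref{primeiro lema}.
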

\begin{proof}
The proof is analogous to that of Lemma \ref{primeiro lema}.
We shall define $\alpha = k + 2(Deg_{y_{1}}(LT(f)) + \ldots +
Deg_{y_{n_{1}}}(LT(f))), \alpha_{1} = \alpha + 2(Deg_{y_{n_{1} +
1}}(LT(f)) + \ldots + Deg_{y_{n_{1} + n_{2}}}(LT(f))), \alpha_{2} =
\alpha_{1} + n_{3}, \alpha_{3} =  Deg_{z_{1}}(LT(f)) + \ldots +
Deg_{z_{m_{1}}}(LT(f))$.

Let $n_1+n_2+n_3\leq l$ and $m_1+m_2+m_3\leq m$. Then we consider the next graded homomorphism:

\begin{center}
$\phi: F\langle y_{1},\ldots,y_{n_{1}+n_{2}+n_{3}},
z_{1},\ldots,z_{m_{1}+m_{2}+m_{3}} \rangle \rightarrow E$ \\
$y_{1} \mapsto e_{k+1}e_{k+2} + \ldots + e_{k +
2(Deg_{y_{1}}(LT(f))) - 1}e_{k + 2(Deg_{y_{1}}(LT(f)))}$\\
$\ldots$ \\
$y_{n_{1}} \mapsto e_{k + 2(Deg_{y_{1}}(LT(f)) + \ldots +
Deg_{y_{n_{1} - 1}}(LT(f))) + 1}e_{k + 2(Deg_{y_{1}}(LT(f)) + \ldots
+ Deg_{y_{n_{1} - 1}}(LT(f))) + 2} + \ldots + e_{k +
2(Deg_{y_{1}}(LT(f)) + \ldots + Deg_{y_{n_{1}}}(LT(f))) - 1}e_{k +
2(Deg_{y_{1}}(LT(f)) + \ldots +
Deg_{y_{n_{1}}}(LT(f)))}$ \\
$y_{n_{1} + 1} \mapsto e_{\alpha + 1} + e_{\alpha + 2}e_{\alpha + 3}
+ \ldots + e_{\alpha + 2Deg_{y_{n_{1} + 1}}(LT(f))}e_{\alpha + 2Deg_{y_{n_{1} + 1}}(LT(f)) + 1}$ \\
$\ldots$ \\
$y_{n_{1} + n_{2}} \mapsto e_{\alpha + 2(Deg_{y_{n_{1} + 1}}(LT(f))
+ \ldots + Deg_{y_{n_{1} + n_{2} - 1}}(LT(f))) + n_{2}} + e_{\alpha
+ 2(Deg_{y_{n_{1}+1}}(LT(f)) + \ldots + Deg_{y_{n_{1} + n_{2} -
1}}(LT(f))) + n_{2} + 1}e_{\alpha + 2(Deg_{y_{n_{1}+1}}(LT(f)) +
\ldots + Deg_{y_{n_{1} + n_{2} - 1}}(LT(f))) + n_{2} + 2} + \ldots +
e_{\alpha + 2(Deg_{y_{n_{1} + 1}}(LT(f)) + \ldots + Deg_{y_{n_{1} +
n_{2}}}(LT(f))) + n_{2} - 1}e_{\alpha + 2(Deg_{y_{n_{1} + 1}}(LT(f))
+ \ldots
+ Deg_{y_{n_{1} + n_{2}}}(LT(f))) + n_{2}} $\\
$\ldots$\\
$y_{n_{1} + n_{2} + n_{3}} \mapsto e_{\alpha_{1} + n_{3}}$\\
$z_{1} \mapsto e_{1}e_{\alpha_{2}} + \ldots +
e_{Deg_{z_{1}}(LT(f))}e_{\alpha_{2} + Deg_{z_{1}}(LT(f))}$\\
$\ldots$ \\
$z_{m_{1}} \mapsto e_{Deg_{z_{1}}(LT(f)) + \ldots + Deg_{z_{m_{1} -
1}}(LT(f)) + 1 }e_{\alpha_{2} + Deg_{z_{1}}(LT(f)) + \ldots +
Deg_{z_{m_{1} - 1}}(LT(f)) + 1} + \ldots + e_{Deg_{z_{1}}(LT(f)) +
\ldots + Deg_{z_{m_{1}}}(LT(f))}e_{\alpha_{2} + Deg_{z_{1}}(LT(f)) +
\ldots + Deg_{z_{m_{1}}}(LT(f))}$\\
$z_{m_{1} + 1} \mapsto e_{\alpha_{3} + 1} + e_{\alpha_{3} +
2}e_{\alpha_{2} + \alpha_{3} + 1} + \ldots + e_{\alpha_{3} +
Deg_{z_{m_{1} + 1}}(LT(f))}e_{\alpha_{2} + \alpha_{3} +
Deg_{z_{m_{1}
+ 1}}(LT(f)) - 1}$ \\
$\ldots$ \\
$z_{m_{1} + m_{2}} \mapsto e_{\alpha_{3} + Deg_{z_{m_{1} +
1}}(LT(f)) + \ldots + Deg_{z_{m_{1} + m_{2} - 1}}(LT(f)) + 1} +
e_{\alpha_{3} + Deg_{z_{m_{1} + 1}}(LT(f)) + \ldots + Deg_{z_{m_{1}
+ m_{2} - 1}}(LT(f)) + 2}e_{\alpha_{2} + \alpha_{3} + Deg_{z_{m_{1}
+ 1}}(LT(f)) + \ldots + Deg_{z_{m_{1} + m_{2} - 1}}(LT(f)) - m_{2} +
2} + \ldots + e_{\alpha_{3} + Deg_{z_{m_{1} + 1}}(LT(f)) + \ldots +
Deg_{z_{m_{1} + m_{2}}}(LT(f))}e_{\alpha_{2} + \alpha_{3} +
Deg_{z_{m_{1} + 1}}(LT(f))
+ \ldots + Deg_{z_{m_{1} + m_{2}}}(LT(f)) - m_{2}}$ \\
$z_{m_{1} + m_{2} + 1} \mapsto e_{\alpha_{3} + Deg_{z_{m_{1} +
1}}(LT(f)) + \ldots + Deg_{z_{m_{1} + m_{2}}}(LT(f)) + 1}$ \\
$\ldots$ \\
$z_{m_{1} + m_{2} + m_{3}} \mapsto e_{\alpha_{3} + Deg_{z_{m_{1} +
1}}(LT(f)) + \ldots + Deg_{z_{m_{1} + m_{2}}}(LT(f)) + m_{3}}$.
\end{center}
\end{proof}

Combining Lemma \ref{combss1} and Proposition \ref{proposicao 9.2} and in light of Lemma \ref{combssstar2} we have the following result.

\begin{teo}
For each $n\in\N$ and $n_1,n_2\in\N$ such that $n_1+n_2=n$ we have \[c_{(n_1,n_2)}(SS1)\leq c_{(n_1,n_2)}(E_{k^*},W(n))\leq c_{(n_1,n_2)}^\circ(SS1).\]
\end{teo}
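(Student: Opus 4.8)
The plan is to establish the two inequalities separately, each by combining a counting lemma with the structural description of the relatively free algebra of $E_{k^*}$ that is already available. Throughout I regard $W(n)_{(n_1,n_2)}$ as the bidegree-$(n_1,n_2)$ homogeneous component (total $Y$-degree $n_1$, total $Z$-degree $n_2$) of $W(n)\subseteq F\langle X\rangle$, so that $W(n)_{(n_1,n_2)}\subseteq F\langle X\rangle_{(n_1,n_2)}$, and I work modulo $T_2(E_{k^*})$, which by Theorem \ref{teorema 9.3} equals $\langle [x_1,x_2,x_3],z_1^p,z_1\cdots z_{k+1},y_1^{pq}-y_1^p\rangle_{T_2}$.

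For the upper bound I would argue as follows. By Proposition \ref{proposicao 9.2} every element of $F\langle X\rangle$, hence in particular every element of $F\langle X\rangle_{(n_1,n_2)}$, is congruent modulo $T_2(E_{k^*})$ to a linear combination $\sum_i f_iu_i$ with the $f_i$ being $p$-polynomials and the $u_i$ distinct elements of $SS1$. Restricting to the fixed bidegree $(n_1,n_2)$ forces each surviving product $f_iu_i$ to have total $Y$-degree $n_1$ and total $Z$-degree $n_2$; the number of such products, as $f_i$ ranges over $p$-polynomials and $u_i$ over $SS1$, is exactly $c^\circ_{(n_1,n_2)}(SS1)$ by Lemma \ref{combssstar2}. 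Since $W(n)_{(n_1,n_2)}\subseteq F\langle X\rangle_{(n_1,n_2)}$, its image in the relatively free algebra lies inside the span of these classes, whence $c_{(n_1,n_2)}(E_{k^*},W(n))\le c^\circ_{(n_1,n_2)}(SS1)$.

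For the lower bound the natural candidates are the elements of $SS1$ of bidegree $(n_1,n_2)$, which number $c_{(n_1,n_2)}(SS1)$ by Lemma \ref{combss1}. First I would check that these actually lie in $W(n)_{(n_1,n_2)}$: since $W$ generates $F\langle X\rangle$, its degree-one component spans $\mathrm{span}(X)$, so products of $n$ generators recover every bidegree-$(n_1,n_2)$ monomial of total degree $n=n_1+n_2$, giving $F\langle X\rangle_{(n_1,n_2)}\subseteq W(n)_{(n_1,n_2)}$ and in particular containing the chosen $SS1$ elements. Their linear independence modulo $T_2(E_{k^*})$ is precisely Lemma \ref{segundo lema}, whose evaluation homomorphisms, built from Corollary \ref{cor1} together with the dominating-part and length-support bookkeeping, separate distinct $SS1$ monomials; independence in $F\langle X\rangle$ descends at once to independence inside the subspace $W(n)_{(n_1,n_2)}$. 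This yields $c_{(n_1,n_2)}(SS1)\le c_{(n_1,n_2)}(E_{k^*},W(n))$ and completes the argument.

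I expect the main obstacle to be the lower-bound realization step for a genuinely arbitrary generating subspace $W$: one must justify that the degree-one part of $W$ surjects onto $\mathrm{span}(X)$ and that passing to the bidegree-$(n_1,n_2)$ component of $n$-fold products of generators lands back inside $W(n)_{(n_1,n_2)}$ rather than merely inside $F\langle X\rangle_{(n_1,n_2)}$. The delicate point is that $T_2(E_{k^*})$ is not homogeneous, owing to the identity $y_1^{pq}-y_1^p$ of Lemma \ref{id2}, so one cannot blindly pass to leading total-degree components. This same non-homogeneity is precisely why the two bounds need not coincide: within a single bidegree the spanning products $f_iu_i$ of the upper bound may satisfy relations, so the true codimension can strictly exceed the count $c_{(n_1,n_2)}(SS1)$ of the $u_i$ alone while remaining below their total number $c^\circ_{(n_1,n_2)}(SS1)$.
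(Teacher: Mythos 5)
Your proposal is correct and follows essentially the same route as the paper: the upper bound from the spanning description of Proposition \ref{proposicao 9.2} counted via Lemma \ref{combssstar2}, and the lower bound from the linear independence of the $SS1$ elements (Lemma \ref{segundo lema}) counted via Lemma \ref{combss1}. Your worry about realizing the $SS1$ elements inside $W(n)_{(n_1,n_2)}$ is resolved by the paper's standing convention that $W$ is an $(l+m)$-dimensional generating subspace with $F\langle X\rangle=\bigoplus_d W^{\otimes d}$, i.e.\ essentially the span of $X$, so $W(n)_{(n_1,n_2)}$ contains every bidegree-$(n_1,n_2)$ polynomial of total degree $n$.
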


\subsection{$E_{k}, \ \ k \geq 1$}

\

We have the following.

\begin{lema}\label{terceiro lema}
The polynomials $SS3 \cap F\langle
y_{1},\ldots,y_{l},z_{1},\ldots,z_{m}\rangle$ form a linearly independent set modulo $T_{2}(E_{k})$.
\end{lema}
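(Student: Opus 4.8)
The plan is to transcribe, almost verbatim, the argument proving that the set $pPol-SS3(a_{1},\ldots,a_{l},b_{1},\ldots,b_{m})$ is linearly independent, adapting it to the present situation in which there are no $p$-polynomial coefficients. First I would suppose, for contradiction, that $f=\sum_{i=1}^{n}\lambda_{i}M_{i}\equiv 0$ modulo $T_{2}(E_{k})$ is a non-trivial relation among distinct elements of $SS3\cap F\langle y_{1},\ldots,y_{l},z_{1},\ldots,z_{m}\rangle$; as in Lemmas \ref{primeiro lema} and \ref{segundo lema} I would assume, without loss of generality, that every variable occurs with positive degree in each summand and that $M_{j}=LT(f)$ for some index $j$ with $\lambda_{j}\neq 0$. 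The whole proof then reduces to producing a single graded homomorphism $\phi\colon F\langle y_{1},\ldots,y_{l},z_{1},\ldots,z_{m}\rangle\to E_{k}$ isolating the leading term.

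The maps to be used are exactly those written out in the proof for $pPol-SS3$, with every summand $\lambda_{i}1_{E}$ deleted from the images of the $y_{i}$. Since there is no $p$-polynomial factor to evaluate, Corollary \ref{cor1} is not needed; moreover only the even variables carry such scalar summands, and removing the degree-$0$ element $\lambda_{i}1_{E}$ from the degree-$0$ image of $y_{i}$ leaves a degree-$0$ element, so $\phi$ remains a graded homomorphism into $E_{k}$. Following the $pPol-SS3$ argument I would split into three cases according to the value of $deg_{Z}(beg(LT(f)))+deg_{Y}(\psi(LT(f)))$: Case 1 when it is at most $k$; Case 2 when it equals $k+1$ and no summand is bad; and Case 3 when it equals $k+1$ and a bad term occurs, in which case $\phi$ is built around $LBT(f)$ rather than $LT(f)$, using Observation \ref{observacao} to locate the even variable that separates the bad term from the leading one.

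In each case the computation gives $dom(\phi(\lambda_{j}M_{j}))=\beta\lambda_{j}\,e_{1}\cdots e_{\alpha_{6}}e_{k+1}\cdots e_{\alpha_{5}}$ for some $\beta\neq 0$, with the indices defined precisely as in the $pPol-SS3$ proof (in Case 3 one works with $LBT(f)$ and its coefficient instead). The decisive step is then to verify that every remaining summand $M_{i}<LT(f)$ is sent by $\phi$ to a dominating part none of whose monomials contains this basis element in its support; this is exactly where Lemmas 7.1, 7.2 and 7.3 of \cite{Fonseca} are invoked, one per case. Consequently $dom(\phi(f))=dom(\phi(\lambda_{j}M_{j}))\neq 0$, contradicting $f\equiv 0$.

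The step I expect to be the main obstacle is Case 3: there the leading term and its bad companions share the same multidegree, so $\phi$ must be tuned to separate $LBT(f)$ from $LT(f)$ while simultaneously respecting the grading of $E_{k}$, that is, sending even variables into $E_{k}^{0}$ and odd variables into $E_{k}^{1}$. Reconciling these two requirements is the delicate bookkeeping; once the explicit maps of the $pPol-SS3$ proposition have been copied and their scalar parts dropped, the verification through Lemma 7.3 of \cite{Fonseca} is routine.
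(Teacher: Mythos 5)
Your proposal is correct and follows essentially the same route as the paper: the paper's proof is precisely the multihomogeneous $pPol$-$SS3$ argument with the scalar summands $\lambda_i 1_E$ and the appeal to Corollary \ref{cor1} stripped out, the same three-case split on $deg_Z(beg(LT(f)))+deg_Y(\psi(LT(f)))$, and the same substitutions isolating $LT(f)$ (or $LBT(f)$ in Case 3, via Observation \ref{observacao}). The only cosmetic difference is that in Cases 2 and 3 the paper spells out the comparison of $beg(M_i)$ with $beg(LT(f))$ or $beg(LBT(f))$ by hand rather than merely citing Lemmas 7.2 and 7.3 of \cite{Fonseca}, but the content is the same.
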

\begin{proof}
Let $f = \sum_{i = 1}^{n}\lambda_{i}u_{i} \equiv 0$ be a linear combination modulo $T_{2}(E_{k})$ of elements of $SS3 \cap F\langle
y_{1},\ldots,y_{l},z_{1},\ldots,z_{m}\rangle$. As usual we may suppose each variable of $f$ appearing in each summand with degree at least 1. Let us suppose by contradiction that there exists $M_{j} \in \{M_{1},\ldots,M_{n}\}$, such that $\lambda_{j} \neq
0$. Moreover let us assume, without loss of generality, $M_{j} = LT(f)$. We also denote \[\alpha_{1} = k + 2(Deg_{y_{1}}(LT(f)) +
\ldots + Deg_{y_{n_{1}}}(LT(f))),\] \[\alpha_{2} = \alpha_{1} +
2(Deg_{y_{n_{1} + 1}}(LT(f)) + \ldots + Deg_{y_{n_{1} + n_{2} -
1}}(LT(f))) - 2n_{2},\] \[\alpha_{3} = \alpha_{2} + Deg_{z_{1}}(LT(f)) +
\ldots + Deg_{z_{m_{1}}}(LT(f)),\] \[\alpha_{4} = n_{2} + n_{3} +
Deg_{z_{1}}(LT(f)) + \ldots + Deg_{z_{m_{1}}}(LT(f)),\] \[\alpha_{5} =
\alpha_{3} + Deg_{z_{m_{1}+1}}(LT(f)) + \ldots + Deg_{z_{m_{1} +
m_{2}}}(LT(f)) + m_{3},\] \[\alpha_{6} = \alpha_{4} +
Deg_{z_{m_{1}+1}}(LT(f)) + \ldots + Deg_{z_{m_{1} + m_{2}}}(LT(f)) -
m_{2}.\]

We have to study three cases.

\begin{description}
\item Case 1: $deg_{Z}(beg(LT(f))) + deg_{Y}(\psi(LT(f))) \leq k$;
\item Case 2: $deg_{Z}(beg(LT(f))) + deg_{Y}(\psi(LT(f))) = k + 1$ and none of the elements of $\{u_{1},\ldots,u_{n}\}$ is bad;
\item Case 3: $deg_{Z}(beg(LT(f))) + deg_{Y}(\psi(LT(f))) = k + 1$ and there exists $u_{i} \in \{u_{1},\ldots,u_{n}\}$ such that $\lambda_{i} \neq
0$ and $u_{i}$ is bad.
\end{description}

In all of the three cases we set $n_1+n_2+n_3\leq l$ $m_1+m_2+m_3\leq m$.

Let us analyze Case 1. So we consider the following graded
homomorphism:

\begin{center}
$\phi: F\langle y_{1},\ldots,y_{n_{1}+n_{2}+n_{3}},z_{1},\ldots,z_{m_{1}+m_{2}+m_{3}}\rangle \rightarrow E_k$\\

$y_{1} \mapsto e_{k+1}e_{k+2} + \ldots + e_{k + 2Deg_{y_{1}}(LT(f))
- 1}e_{k + 2Deg_{y_{1}}(LT(f))}$ \\
$\ldots$ \\
$y_{n_{1}} \mapsto e_{k + 2(Deg_{y_{1}}(LT(f)) + \ldots +
Deg_{y_{n_{1} - 1}}(LT(f))) + 1}e_{k + 2(Deg_{y_{1}}(LT(f)) + \ldots
+ Deg_{y_{n_{1} - 1}}(LT(f))) + 2} + \ldots + e_{k +
2(Deg_{y_{1}}(LT(f)) + \ldots + Deg_{y_{n_{1}}}(LT(f))) - 1}e_{k +
2(Deg_{y_{1}}(LT(f)) + \ldots + Deg_{y_{n_{1}}}(LT(f)))}$ \\
$y_{n_{1} + 1} \mapsto e_{1} + e_{\alpha_{1} + 1}e_{\alpha_{1} + 2}
+ \ldots + e_{\alpha_{1} + 2Deg_{y_{n_{1} + 1}}(LT(f)) -
3}e_{\alpha_{1} +
2Deg_{y_{n_{1} + 1}}(LT(f)) - 2}$ \\
$\ldots$ \\
$y_{n_{1} + n_{2}} \mapsto e_{n_{2}} + e_{\alpha_{1} +
2(Deg_{y_{n_{1} + 1}}(LT(f)) + \ldots + Deg_{y_{n_{1} + n_{2} -
1}}(LT(f))) + 2(1 - n_{2}) + 1}e_{\alpha_{1} + 2(Deg_{y_{n_{1} +
1}}(LT(f)) + \ldots + Deg_{y_{n_{1} + n_{2} - 1}}(LT(f))) + 2(1 -
n_{2}) + 2} + \ldots + e_{\alpha_{1} + 2(Deg_{y_{n_{1} + 1}}(LT(f))
+ \ldots + Deg_{y_{n_{1} + n_{2}}}(LT(f))) - 2n_{2} -
1}e_{\alpha_{1} + 2(Deg_{y_{n_{1} + 1}}(LT(f)) + \ldots +
Deg_{y_{n_{1} + n_{2} - 1}}(LT(f))) - 2n_{2}}$ \\
$y_{n_{1} + n_{2} + 1} \mapsto e_{n_{2} + 1}$ \\
$\ldots$ \\
$y_{n_{1} + n_{2} + n_{3}} \mapsto e_{n_{2} + n_{3}}$\\
$z_{1} \mapsto e_{\alpha_{2} + 1}e_{n_{2} + n_{3} + 1} + \ldots +
e_{\alpha_{2} + Deg_{z_{1}}(LT(f))}e_{n_{2} + n_{3} +
Deg_{z_{1}}(LT(f))}$ \\
$\ldots$ \\
$z_{m_{1}} \mapsto e_{\alpha_{2} + Deg_{z_{1}}(LT(f)) + \ldots +
Deg_{z_{m_{1} - 1}}(LT(f)) + 1}e_{n_{2} + n_{3} + Deg_{z_{1}}(LT(f))
+ \ldots + Deg_{z_{m_{1} - 1}}(LT(f)) + 1} + \ldots + e_{\alpha_{2}
+ Deg_{z_{1}}(LT(f)) + \ldots + Deg_{z_{m_{1}}}(LT(f))}e_{n_{2} +
n_{3} + Deg_{z_{1}}(LT(f)) + \ldots + Deg_{z_{m_{1}}}(LT(f))}$ \\
$\ldots$\\
$z_{m_{1} + 1} \mapsto e_{\alpha_{3}} + e_{\alpha_{3} + 1}e_{\alpha_{4} + 1} + \ldots + e_{\alpha_{3} + Deg_{z_{m_{1} + 1}}(LT(f))}e_{\alpha_{4} + Deg_{z_{m_{1} + 1}}(LT(f)) - 1}$ \\
$\ldots$ \\
$z_{m_{1} + m_{2}} \mapsto e_{\alpha_{3} + Deg_{z_{m_{1}+1}}(LT(f))
+ \ldots + Deg_{z_{m_{1} + m_{2} - 1}}(LT(f))} + e_{\alpha_{4} +
Deg_{z_{m_{1}+1}}(LT(f)) + \ldots + Deg_{z_{m_{1} + m_{2} -
1}}(LT(f)) - m_{2} + 2}e_{\alpha_{3} + Deg_{z_{m_{1}+1}}(LT(f)) +
\ldots + Deg_{z_{m_{1} + m_{2} - 1}}(LT(f)) + 1} + \ldots +
e_{\alpha_{4} + Deg_{z_{m_{1}+1}}(LT(f)) + \ldots + Deg_{z_{m_{1} + m_{2}}}(LT(f)) - m_{2}}e_{\alpha_{3} + Deg_{z_{m_{1}+1}}(LT(f)) + \ldots + Deg_{z_{m_{1} + m_{2}}}(LT(f))}$ \\
$z_{m_{1} + m_{2} + 1} \mapsto e_{\alpha_{3} + Deg_{z_{m_{1}+1}}(LT(f)) + \ldots + Deg_{z_{m_{1} + m_{2}}}(LT(f)) + 1}$ \\
$z_{m_{1} + m_{2} + m_{3}} \mapsto e_{\alpha_{3} +
Deg_{z_{m_{1}+1}}(LT(f)) + \ldots + Deg_{z_{m_{1} + m_{2}}}(LT(f)) +
m_{3}}$
\end{center}
We have $\phi(LT(f)) = \lambda.\lambda_{j}
e_{1}.\cdots.e_{\alpha_{6}}.e_{k+1}.\cdots.e_{\alpha_{5}}$ for some $\lambda \in F - \{0\}$. Suppose there exists $M_{i} <
LT(f)$, such that $\lambda_{i} \neq 0$, then there exists a variable $x
\in \{y_{1},\ldots,z_{m_{1}+m_{2}+m_{3}}\}$ such that $supp(\phi(x))$
is not contained in the support of any of the summands of $dom(\phi(M_{i}))$. We obtain $dom(\phi(f)) =
\lambda.\lambda_{i}e_{1}.\cdots.e_{\alpha_{6}}.e_{k+1}.\cdots.e_{\alpha_{5}}
= 0$ which is a contradiction.

Now we study Case 2. Let us consider the following graded homomorphism:

\begin{center}
$\phi: F\langle y_{1},\ldots,y_{n_{1}+n_{2}+n_{3}},z_{1},\ldots,z_{m_{1}+m_{2}+m_{3}}\rangle \rightarrow E_k$\\

$y_{1} \mapsto e_{k+1}e_{k+2} + \ldots + e_{k + 2Deg_{y_{1}}(LT(f))
- 1}e_{k + 2Deg_{y_{1}}(LT(f))}$ \\
$\ldots$ \\
$y_{n_{1}} \mapsto e_{k + 2(Deg_{y_{1}}(LT(f)) + \ldots +
Deg_{y_{n_{1} - 1}}(LT(f))) + 1}e_{k + 2(Deg_{y_{1}}(LT(f)) + \ldots
+ Deg_{y_{n_{1} - 1}}(LT(f))) + 2} + \ldots + e_{k +
2(Deg_{y_{1}}(LT(f)) + \ldots + Deg_{y_{n_{1}}}(LT(f))) - 1}e_{k +
2(Deg_{y_{1}}(LT(f)) + \ldots + Deg_{y_{n_{1}}}(LT(f)))}$ \\
$y_{n_{1} + 1} \mapsto e_{1} + e_{\alpha_{1} + 1}e_{\alpha_{1} + 2}
+ \ldots + e_{\alpha_{1} + 2Deg_{y_{n_{1} + 1}}(LT(f)) -
3}e_{\alpha_{1} +
2Deg_{y_{n_{1} + 1}}(LT(f)) - 2}$ \\
$\ldots$ \\
$y_{n_{1} + n_{2}} \mapsto e_{n_{2}} + e_{\alpha_{1} +
2(Deg_{y_{n_{1} + 1}}(LT(f)) + \ldots + Deg_{y_{n_{1} + n_{2} -
1}}(LT(f))) + 2(1 - n_{2}) + 1}e_{\alpha_{1} + 2(Deg_{y_{n_{1} +
1}}(LT(f)) + \ldots + Deg_{y_{n_{1} + n_{2} - 1}}(LT(f))) + 2(1 -
n_{2}) + 2} + \ldots + e_{\alpha_{1} + 2(Deg_{y_{n_{1} + 1}}(LT(f))
+ \ldots + Deg_{y_{n_{1} + n_{2}}}(LT(f))) - 2n_{2} -
1}e_{\alpha_{1} + 2(Deg_{y_{n_{1} + 1}}(LT(f)) + \ldots +
Deg_{y_{n_{1} + n_{2} - 1}}(LT(f))) - 2n_{2}}$ \\
$y_{n_{1} + n_{2} + 1} \mapsto e_{n_{2} + 1}$ \\
$\ldots$ \\
$y_{n_{1} + n_{2} + n_{3}} \mapsto e_{n_{2} + n_{3}}$\\
$z_{1} \mapsto e_{\alpha_{2} + 1} + \ldots + e_{\alpha_{2} +
Deg_{z_{1}}(LT(f))}e_{n_{2} + n_{3} +
Deg_{z_{1}}(LT(f)) - 1}$ \\
$\ldots$ \\
$z_{m_{1}} \mapsto e_{\alpha_{2} + Deg_{z_{1}}(LT(f)) + \ldots +
Deg_{z_{m_{1} - 1}}(LT(f))}e_{n_{2} + n_{3} + Deg_{z_{1}}(LT(f)) +
\ldots + Deg_{z_{m_{1} - 1}}(LT(f))} + \ldots + e_{\alpha_{2} +
Deg_{z_{1}}(LT(f)) + \ldots + Deg_{z_{m_{1}}}(LT(f))}e_{n_{2} +
n_{3} + Deg_{z_{1}}(LT(f)) + \ldots + Deg_{z_{m_{1}}}(LT(f)) - 1}$ \\
$\ldots$\\
$z_{m_{1} + 1} \mapsto e_{\alpha_{3}} + e_{\alpha_{3} + 1}e_{\alpha_{4}} + \ldots + e_{\alpha_{3} + Deg_{z_{m_{1} + 1}}(LT(f))}e_{\alpha_{4} + Deg_{z_{m_{1} + 1}}(LT(f)) - 2}$ \\
$\ldots$ \\
$z_{m_{1} + m_{2}} \mapsto e_{\alpha_{3} + Deg_{z_{m_{1}+1}}(LT(f))
+ \ldots + Deg_{z_{m_{1} + m_{2} - 1}}(LT(f))} + e_{\alpha_{4} +
Deg_{z_{m_{1}+1}}(LT(f)) + \ldots + Deg_{z_{m_{1} + m_{2} -
1}}(LT(f)) - m_{2} + 1}e_{\alpha_{3} + Deg_{z_{m_{1}+1}}(LT(f)) +
\ldots + Deg_{z_{m_{1} + m_{2} - 1}}(LT(f)) + 1} + \ldots +
e_{\alpha_{4} + Deg_{z_{m_{1}+1}}(LT(f)) + \ldots + Deg_{z_{m_{1} + m_{2}}}(LT(f)) - m_{2} - 1}e_{\alpha_{3} + Deg_{z_{m_{1}+1}}(LT(f)) + \ldots + Deg_{z_{m_{1} + m_{2}}}(LT(f))}$ \\
$z_{m_{1} + m_{2} + 1} \mapsto e_{\alpha_{3} + Deg_{z_{m_{1}+1}}(LT(f)) + \ldots + Deg_{z_{m_{1} + m_{2}}}(LT(f)) + 1}$ \\
$z_{m_{1} + m_{2} + m_{3}} \mapsto e_{\alpha_{3} +
Deg_{z_{m_{1}+1}}(LT(f)) + \ldots + Deg_{z_{m_{1} + m_{2}}}(LT(f)) +
m_{3}}$
\end{center}
Notice that $\phi(LT(f)) = \lambda.\lambda_{j}
e_{1}.\cdots.e_{\alpha_{6} - 1}.e_{k+1}.\cdots.e_{\alpha_{5}}$ for
some $\lambda \in F - \{0\}$. Let us suppose that there exists
$M_{i} < LT(F)$ such that $\lambda_{i} \neq 0$. If for some $x\in X$
we have $Deg_{x} M_{i} < Deg_{x} LT(f)$, then it is easy to see that
non of the supports of any summand of $dom(\phi(M_{i}))$ contains
$supp(\phi(x))$.

From now on we shall consider the case in which $Deg_{x} M_{i} =
Deg_{x} LT(f)$ for every $x \in X$. In this case if $beg LT(f) = beg
u_{i}$, we have $LT(f) = u_{i}$. Hence we have only one case to be
studied, i.e., $beg(M_{i}) <_{lex-rig} beg(LT(f))$. Due to the fact
that $M_{i}$ is not bad, there exists a variable $x \in X -
pr(z)(LT(F))$ such that $Deg_{x}(beg(M_{i})) < Deg_{x}(beg(LT(f)))$.
Then we have none of the supports of any summand of $dom
(\phi(M_{i}))$ contains $supp(\phi(x))$.

In order to complete the proof for Case 2 it is enough to repeat verbatim the proof of Case 1. We shall obtain  $dom(\phi(f)) =
\phi(LT(f)) = 0$ which is a contradiction.

Finally we consider Case 3. Let us denote by $\lambda_{k} \neq 0$
the coefficient associated to $LBT(f)$. Let us consider the
following graded homomorphism:

\begin{center}
$\phi: F\langle y_{1},\ldots,y_{n_{1}+n_{2}+n_{3}},z_{1},\ldots,z_{m_{1}+m_{2}+m_{3}}\rangle \rightarrow E$\\

$y_{1} \mapsto e_{k+1}e_{k+2} + \ldots + e_{k + 2Deg_{y_{1}}(LBT(f))
- 1}e_{k + 2Deg_{y_{1}}(LBT(f))}$ \\
$\ldots$ \\
$y_{n_{1}} \mapsto e_{k + 2(Deg_{y_{1}}(LBT(f)) + \ldots +
Deg_{y_{n_{1} - 1}}(LBT(f))) + 1}e_{k + 2(Deg_{y_{1}}(LBT(f)) +
\ldots + Deg_{y_{n_{1} - 1}}(LBT(f))) + 2} + \ldots + e_{k +
2(Deg_{y_{1}}(LBT(f)) + \ldots + Deg_{y_{n_{1}}}(LBT(f))) - 1}e_{k +
2(Deg_{y_{1}}(LBT(f)) + \ldots + Deg_{y_{n_{1}}}(LBT(f)))}$ \\
$y_{n_{1} + 1} \mapsto e_{1} + e_{\alpha_{1} + 1}e_{\alpha_{1} + 2}
+ \ldots + e_{\alpha_{1} + 2Deg_{y_{n_{1} + 1}}(LBT(f)) -
3}e_{\alpha_{1} +
2Deg_{y_{n_{1} + 1}}(LBT(f)) - 2}$ \\
$\ldots$ \\
$y_{n_{1} + n_{2}} \mapsto e_{n_{2}} + e_{\alpha_{1} +
2(Deg_{y_{n_{1} + 1}}(LBT(f)) + \ldots + Deg_{y_{n_{1} + n_{2} -
1}}(LBT(f))) + 2(1 - n_{2}) + 1}e_{\alpha_{1} + 2(Deg_{y_{n_{1} +
1}}(LBT(f)) + \ldots + Deg_{y_{n_{1} + n_{2} - 1}}(LBT(f))) + 2(1 -
n_{2}) + 2} + \ldots + e_{\alpha_{1} + 2(Deg_{y_{n_{1} + 1}}(LBT(f))
+ \ldots + Deg_{y_{n_{1} + n_{2}}}(LBT(f))) - 2n_{2} -
1}e_{\alpha_{1} + 2(Deg_{y_{n_{1} + 1}}(LBT(f)) + \ldots +
Deg_{y_{n_{1} + n_{2} - 1}}(LBT(f))) - 2n_{2}}$ \\
$y_{n_{1} + n_{2} + 1} \mapsto e_{n_{2} + 1}$ \\
$\ldots$ \\
$y_{n_{1} + n_{2} + n_{3}} \mapsto e_{n_{2} + n_{3}}$\\
$z_{2} \mapsto e_{\alpha_{2} + 1}e_{n_{2} + n_{3} + 1} + \ldots +
e_{\alpha_{2} + Deg_{z_{1}}(LBT(f))}e_{n_{2} + n_{3} +
Deg_{z_{1}}(LBT(f))}$ \\
$\ldots$ \\
$z_{m_{1}} \mapsto e_{\alpha_{2} + Deg_{z_{1}}(LBT(f)) + \ldots +
Deg_{z_{m_{1} - 1}}(LBT(f)) + 1}e_{n_{2} + n_{3} +
Deg_{z_{1}}(LBT(f)) + \ldots + Deg_{z_{m_{1} - 1}}(LBT(f)) + 1} +
\ldots + e_{\alpha_{2} + Deg_{z_{1}}(LBT(f)) + \ldots +
Deg_{z_{m_{1}}}(LBT(f))}e_{n_{2} +
n_{3} + Deg_{z_{1}}(LBT(f)) + \ldots + Deg_{z_{m_{1}}}(LBT(f))}$ \\
$\ldots$\\
$z_{1} \mapsto e_{\alpha_{3}} + e_{\alpha_{3} + 1}e_{\alpha_{4} + 1} + \ldots + e_{\alpha_{3} + Deg_{z_{m_{1} + 1}}(LBT(f))}e_{\alpha_{4} + Deg_{z_{m_{1} + 1}}(LBT(f)) - 1}$ \\
$\ldots$ \\
$z_{m_{1} + m_{2}} \mapsto e_{\alpha_{3} + Deg_{z_{m_{1}+1}}(LBT(f))
+ \ldots + Deg_{z_{m_{1} + m_{2} - 1}}(LBT(f))} + e_{\alpha_{4} +
Deg_{z_{m_{1}+1}}(LBT(f)) + \ldots + Deg_{z_{m_{1} + m_{2} -
1}}(LBT(f)) - m_{2} + 2}e_{\alpha_{3} + Deg_{z_{m_{1}+1}}(LBT(f)) +
\ldots + Deg_{z_{m_{1} + m_{2} - 1}}(LBT(f)) + 1} + \ldots +
e_{\alpha_{4} + Deg_{z_{m_{1}+1}}(LBT(f)) + \ldots + Deg_{z_{m_{1} + m_{2}}}(LBT(f)) - m_{2}}e_{\alpha_{3} + Deg_{z_{m_{1}+1}}(LBT(f)) + \ldots + Deg_{z_{m_{1} + m_{2}}}(LBT(f))}$ \\
$z_{m_{1} + m_{2} + 1} \mapsto e_{\alpha_{3} + Deg_{z_{m_{1}+1}}(LBT(f)) + \ldots + Deg_{z_{m_{1} + m_{2}}}(LBT(f)) + 1}$ \\
$z_{m_{1} + m_{2} + m_{3}} \mapsto e_{\alpha_{3} +
Deg_{z_{m_{1}+1}}(LBT(f)) + \ldots + Deg_{z_{m_{1} + m_{2}}}(LBT(f))
+ m_{3}}$
\end{center}
Notice that $\phi(LBT(f)) = \lambda.\lambda_{k}
e_{1}.\ldots.e_{\alpha_{6}}e_{k+1}\ldots e_{\alpha_{5}}$ for some
$\lambda \in F - \{0\}$. Suppose there exists $M_{i} \neq LBT(f)$, such that
$\lambda_{i} \neq 0$. Suppose firstly that there exists a variable $x \in X$ such that $Deg_{x} M_{i} < Deg_{x}
LBT(f)$, then it is easy to note that none of the supports of any summand of $dom(\phi(M_{i}))$ contains $supp(\phi(x))$.

From now on we shall assume $Deg_{x} M_{i} = Deg_{x} LBT(f)$ for
every $x \in X$. In this case we have $beg M_{i} = beg LBT(f)$ does
not hold because $M_{i} \neq LBT(f)$. It turns out it is sufficient
to analyze the cases in which $beg M_{i} <_{lex-rig} beg LBT(f)$ or
$beg LBT(f) <_{lex-rig} beg M_{i}$.

\begin{description}
\item $beg M_{i} <_{lex-rig} beg LBT(f)$. In this case there exists a variable \newline $x \in Yyn(LBT(f)) \cup
Zyn(LBT(f))$ such that $Deg_{x} beg M_{i} < Deg_{x} beg LBT(f)$. Then we have $\phi(M_{i}) = 0$.

\item $beg LBT(f) <_{lex-rig} beg M_{i}$. In this case we have $M_{i}$
is not bad. Notice that $Deg_{x} beg(M_{i}) = Deg_{x} beg(LBT(f)) =
Deg_{x} beg(LT(f))$ for every $x \in Z - pr(z)(LT(f))$. If there
exists a variable $x \in Y$ such that $Deg_{x} beg(LBT(f)) > Deg_{x}
beg(M_{i})$, we have $\phi(M_{i}) = 0$. We claim that there always
exists $x \in Y$ such that $Deg_{x} beg(M_{i}) < Deg_{x}
beg(LBT(f))$. For, let us assume that $M_{i} \neq LT(f)$ (see
Observation \ref{observacao}). If $Deg_{x} beg(LBT(f)) \leq Deg_{x}
beg(M_{i})$ for every $x \in Y$, we have $Deg_{pr(z)(LT(f))}
beg(M_{i}) = Deg_{pr(z)(LT(f))} beg(LBT(f)) + 1$, because $M_{i}$ is
not bad. We recall that $beg M_{i} <_{lex-rig} beg LT(f)$, then
there exists $x_{1} \in Y$ such that $Deg_{x_{1}} beg(M_{i}) <
Deg_{x_{1}} beg(LT(f))$. On the other hand we have \[Deg_{x_{1}}
beg(M_{i}) \geq Deg_{x_{1}} beg(LBT(f)),\ Deg_{x_{1}} beg(LBT(f))
\geq Deg_{x_{1}} beg(LT(f))\] which is a contradiction. Hence
$\phi(M_{i}) = 0$.
\end{description}
In order to complete the proof of Case 3 it is sufficient to follow verbatim the proof of Case 1. We obtain $dom(\phi(f)) = \phi(LBT(f)) = 0$ which is a contradiction and we are done.
\end{proof}

Combining Lemma \ref{combss3} and Proposition \ref{proposicao 10.16} and in light of Lemma \ref{combssstar2} we have the following result.

\begin{teo}
For each $n\in\N$ and $n_1,n_2\in\N$ such that $n_1+n_2=n$ we have \[c_{(n_1,n_2)}(SS3)\leq c_{(n_1,n_2)}(E_{k},W(n))\leq c_{(n_1,n_2)}^\circ(SS3).\]
\end{teo}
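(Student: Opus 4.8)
The plan is to trap $c_{(n_1,n_2)}(E_k,W(n))$ between the two stated quantities by producing an explicit linearly independent family inside $W(n)_{(n_1,n_2)}$ modulo $T_2(E_k)$ for the lower bound, and an explicit spanning family for the image of $W(n)_{(n_1,n_2)}$ in the relatively free algebra for the upper bound. Throughout I use that, since $n=n_1+n_2$, the space $W(n)_{(n_1,n_2)}$ is exactly the bidegree-$(n_1,n_2)$ component of $F\langle X\rangle$: every polynomial of $Y$-degree $n_1$ and $Z$-degree $n_2$ has total degree $n$ and hence lies in $W^{\otimes n}=W(n)$.

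First I would settle the lower bound. Each $u\in SS3$ with $\deg_Y(u)=n_1$ and $\deg_Z(u)=n_2$ has total degree $n$, so it lies in $W(n)_{(n_1,n_2)}$. By Lemma \ref{terceiro lema} the family $SS3\cap F\langle y_1,\ldots,y_l,z_1,\ldots,z_m\rangle$ is linearly independent modulo $T_2(E_k)$; in particular so is the subfamily of those $u$ of bidegree $(n_1,n_2)$. By Lemma \ref{combss3} this subfamily has exactly $c_{(n_1,n_2)}(SS3)$ elements, and they descend to linearly independent classes in $W(n)_{(n_1,n_2)}/(W(n)_{(n_1,n_2)}\cap T_2(E_k))$. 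This yields $c_{(n_1,n_2)}(SS3)\leq c_{(n_1,n_2)}(E_k,W(n))$.

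For the upper bound I would invoke Proposition \ref{proposicao 10.16}: every element of $W(n)_{(n_1,n_2)}$ is congruent modulo $T_2(E_k)$ to a combination $\sum_i f_iu_i$ with each $f_i$ a $p$-polynomial and each $u_i\in SS3$. Consequently the image of $W(n)_{(n_1,n_2)}$ in the relatively free algebra is spanned by the normal forms $fu$ of this shape, and Lemma \ref{combssstar2} counts the number of such forms carrying the appropriate degrees as $c_{(n_1,n_2)}^{\circ}(SS3)$. Bounding the dimension of the quotient by the size of this spanning family gives $c_{(n_1,n_2)}(E_k,W(n))\leq c_{(n_1,n_2)}^{\circ}(SS3)$, completing the proof.

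The delicate point, and the step I expect to require the most care, is precisely this upper bound: because $T_2(E_k)$ is not multihomogeneous (the relation $y_1^{pq}-y_1^p$ strictly lowers the $Y$-degree), the reduction furnished by Proposition \ref{proposicao 10.16} may output normal forms $fu$ whose $Y$-degree is strictly smaller than $n_1$, so the spanning family is genuinely larger than the collection of bidegree-$(n_1,n_2)$ forms. What rescues the inequality is that $c_{(n_1,n_2)}^{\circ}(SS3)$ is assembled from the full $F$-spaces of $p$-polynomials (the factors $q^{\kappa(\cdot)}$ of Lemma \ref{combssstar2}) rather than from single $p$-monomials as in $c_{(n_1,n_2)}^{*}(SS3)$; this built-in slack is what dominates the contributions coming from the lower $Y$-degrees, at the cost of the bound being non-tight. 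By contrast the $Z$-degree is preserved under the reduction, since the only odd relations ($z_1^p$ and the commutator identities) either annihilate a term or leave its $Z$-degree unchanged, so no analogous complication arises on the odd side, and the lower bound is immediate once Lemma \ref{terceiro lema} is in hand.
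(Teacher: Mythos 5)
Your argument is correct and is essentially the paper's own: the lower bound follows from the linear independence of the bidegree-$(n_1,n_2)$ elements of $SS3$ modulo $T_{2}(E_{k})$ (Lemma \ref{terceiro lema}) together with the count in Lemma \ref{combss3}, and the upper bound from the spanning property of Proposition \ref{proposicao 10.16} together with the count in Lemma \ref{combssstar2}. Your closing remark on why the non-homogeneity of $T_{2}(E_{k})$ (the identity $y_{1}^{pq}-y_{1}^{p}$) forces the looser count $c_{(n_1,n_2)}^{\circ}$ instead of $c_{(n_1,n_2)}^{*}$ is precisely the point the paper leaves implicit.
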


\end{document}